\numberwithin{equation}{section}
\newtheorem{thm}{Theorem}[section]
\newtheorem{pr}[thm]{Proposition}
\newtheorem{cor}[thm]{Corollary}
\newtheorem{conj}[thm]{Conjecture}
\newtheorem{df}[thm]{Definition}
\newtheorem{rem}[thm]{Remark}
\newtheorem{main-idea}[thm]{Main Ideas}
\newtheorem{main-result}[thm]{Main results}
\def\R{{\mathbb R}}
\def\be{\begin{equation}}
	\def\ee{\end{equation}}
\def\1{\operatorname{\mathbf 1}}
\DeclareMathOperator{\AC}{AC}
\DeclareMathOperator{\dom}{dom}
\DeclareMathOperator{\Var}{Var}
\DeclareMathOperator{\diam}{diam}
\def\mint{\mathop{\,\rlap{-}\!\!\int}\nolimits}
\numberwithin{equation}{section}
\newcommand{\abs}[1]{\left\vert#1\right\vert}
\newcommand{\norm}[1]{\left\Vert#1\right\Vert}
\newcommand{\www}{0.49\textwidth}
\begin{document}
\title{Convergence, optimization and stability of singular eigenmaps}

\author[{Akwei \emph{et al.}}]
{
Bernard Akwei, 
Bobita Atkins, 
Rachel Bailey, 
Ashka Dalal, 
Natalie Dinin, 
Jonathan Kerby-White, 
Tess McGuinness,  
Tonya Patricks, 
Luke Rogers, 
Genevieve Romanelli, 
Yiheng Su, 
Alexander Teplyaev 
	}

	\address{Department of Mathematics, 
		University of Connecticut, 
		Storrs CT 06269 USA 
		}

\email{bernard.akwei@uconn.edu}
\email{rachel.bailey@uconn.edu}
\email{rbailey@bentley.edu}
\email{luke.rogers@uconn.edu}
\email{alexander.teplyaev@uconn.edu}


\begin{abstract}
	Eigenmaps are important in analysis, geometry, and machine learning, especially in nonlinear dimension reduction. Approximation of the eigenmaps of a Laplace operator depends crucially on the scaling parameter $\epsilon$. If $\epsilon$ is too small or too large, then the approximation is inaccurate or completely breaks down. However, an analytic expression for the optimal $\epsilon$ is out of reach. In our work, we use some explicitly solvable models and Monte Carlo simulations to find the approximately optimal range of $\epsilon$ that gives, on average, relatively accurate approximation of the eigenmaps. Numerically we can consider several model situations where eigenmaps can be computed analytically, including intervals with uniform and weighted measures, squares, tori, spheres, and the Sierpinski gasket. In broader terms, we intend to study eigenmaps on weighted Riemannian manifolds, possibly with boundary, and on some metric measure spaces, such as fractals.	
	\tableofcontents
\end{abstract}

		\maketitle

 \section{Introduction}There is an extensive mathematical literature on eigenmaps in analysis, geometry, and applied mathematics. Versions of the Laplacian eigenmaps introduced by Belkin and Niyogi are widely used in nonlinear dimension reduction techniques in data analysis and machine learning. The reader can find relevant information and further references in \cite{belkin2003laplacian, belkin2008towards, coifman2006diffusion, koltchinskii2000random, gine2006empirical, burago2015graph, lin2022varadhan, green2021minimax, jones2008manifold, garcia2020error, Venkatraman2023} and particularly relevant  results   in the recent article of Martin Wahl \cite{wahl2024kernel}. 
In our work we study eigenmaps on an wide variety of spaces, with particular attention to examples that are not covered by the existing theory, including   simple Riemannian manifolds with boundary and some fractal spaces.

The use of eigenmaps for dimension reduction problems was first proposed in~\cite{belkin2003laplacian}.  The suggestion was that a set of data points $X=\{x_1,\dotsc,x_j\}$ in a high dimensional space $\mathbb{R}^N$ should be  treated as vertices of a graph, either by taking edges between points separated by distance at most a threshold $\epsilon$ or by joining each vertex to its $k$ nearest neighbors.   A small number $D$ of eigenfunctions $\phi_1,\dotsc,\phi_D$ of the graph Laplacian (or a weighted graph Laplacian) could then taken as coordinates for the data, defining an eigenmap $\Phi=(\phi_1,\dotsc,\phi_D):X\to\mathbb{R}^D$. This method was motivated by arguments suggesting that if the original data consisted of $n$ sufficiently well-distributed points on a nice manifold $M$, then the eigenmap would preserve geometric features of $M$.

An associated collection of mathematical problems may be stated, somewhat vaguely, as follows:  Suppose $X=\{x_1,x_2,x_3,\dotsc\}$ are points on a Riemannian manifold $M$ with Laplace-Beltrami operator $\mathcal{L}$, and one defines a graph Laplacian $\mathcal{L}_n$ by defining edge weights between pairs of points in $X_n=\{x_1,\dotsc,x_n\}$.  Under what conditions on the distribution of the points in $X$, the nature of the edge weights, and the geometric properties of $M$ can one ensure that $\mathcal{L}_n$ converges to $\mathcal{L}$?  Note that in this formulation the geometric features of $M$ are encoded in its Laplace-Beltrami operator  and the sense and extent to which these are preserved are encoded in the type and rate of convergence of $\mathcal{L}_n$ to $\mathcal{L}$.

Several authors have developed rigorous results on the geometric properties of eigenmaps by proving theorems that solve problems of the above type.  These have involved various assumptions on the distribution of points, as well as hypotheses about the smoothness of the manifold and bounds on its curvature. One possible approach is presented in Appendix~\ref{ARTsection} below. A common feature of these results that the approximation proceeds in two steps: one first shows that under smoothness and curvature assumptions one can approximate the Laplace-Beltrami operator of $M$ by an operator giving the $\epsilon$-normalized difference of the function value and a weighted average over balls of a sufficiently small size $\epsilon$, and then shows that this difference operator can be approximated by graph Laplacian operators provided the sample points are sufficiently well-distributed. The $\epsilon$-normalization contains a parameter $\beta$ reflecting how smoothness is connected to length scaling; on manifolds we have $\beta=2$, but this is not the case for general metric spaces.  In what follows, it will be helpful to distinguish these steps, approximating $\mathcal{L}$ by an average difference operator $\mathcal{L}_\epsilon$ and then $\mathcal{L_\epsilon}$ by a sequence of graph Laplacian operators $\mathcal{L}_{\epsilon,n}$.  The two steps of the approximation each correspond to a term in the triangle inequality
\begin{equation}\label{eqn:triangleest}
|\mathcal{L}f-\epsilon^{-\beta}\mathcal{L}_{\epsilon,n}f|\leq |\mathcal{L}f-\epsilon^{-\beta}\mathcal{L}_\epsilon f|+\epsilon^{-\beta}|\mathcal{L}_\epsilon f-\mathcal{L}_{\epsilon,n}f|
\end{equation}
and it is significant that a good estimate depends  crucially on the choice of scale parameter $\epsilon=\epsilon(n)$. If $\epsilon$ is too small or too large then the approximation is inaccurate or breaks down completely.  One is typically  interested in an $\epsilon$ that is suitable for use across a space of functions (e.g., with some degree of smoothness) but an analytical expression for an optimal $\epsilon$ across such a class appears to be difficult to establish in general settings.

In the present work our goal is to present several model situations where eigenmaps can be computed analytically or numerically, and which illuminate cases where the existing theory is not applicable or is incomplete.  Our elementary examples include intervals with uniform (Figure~\ref{uni-fig} and \ref{fig-best-fit}) and weighted measures (Figure~\ref{gauss-exp-fig}), squares, tori (Figure~\ref{sttt-fig}), spheres, and the Sierpinski gasket (Figure~\ref{fractal-fig}).  Intervals and squares are among the simplest examples in which  the underlying space is a manifold with boundary, while the existing literature assumes either that there is no boundary or that one is well-separated from it.  We also investigate a connection between eigenmaps and orthogonal polynomials in the former setting.  The Sierpinski Gasket is one of the simplest fractal spaces that admits a natural Laplacian which corresponds to a very different notion of smoothness than that found on a manifold; in particular, the associated average difference operator has a different scaling than is considered in the existing theory. We also use both our explicitly solvable models and Monte Carlo simulations to  investigate the optimal range $\epsilon(n)$ to use in the estimate~\ref{eqn:triangleest} for a given $n$ and prescribed point distribution, and to consider the dependence and stability of the method when the choice of Laplacian is varied. These examples are intended to serve as model cases for later research on the corresponding problems for eigenmaps on weighted Riemannian manifolds, possibly with boundary, and on some metric measure spaces, especially self-similar fractals known to support Dirichlet forms, such as the nested fractals introduced by Lindstr\"om~\cite{Lindstrom}.
In addition to theoretical results and numeriacl illustrations, we formulate eight conjectures 
\ref{conj:betterconvergence},
\ref{conj-BN-GK},
\ref{conj-BGLK},
\ref{Sierpinski-conj},
\ref{conj:SGlapbyrandgraphs},
\ref{conj-conv-rot},
\ref{conj-conv2} and 
\ref{conj-no-conv3}
dealing with specifics of convergence of eigenmaps on various spaces. 

\subsection*{Acknowledgments} This research is supported in part by NSF grants DMS~1950543 and DMS~2349433. The last author acknowledges with appreciation constructive and very helpful discussions with Michael Hinz that significantly enhanced this work.

 \begin{figure}[h]
 	\centering
 	\includegraphics[width=\www]{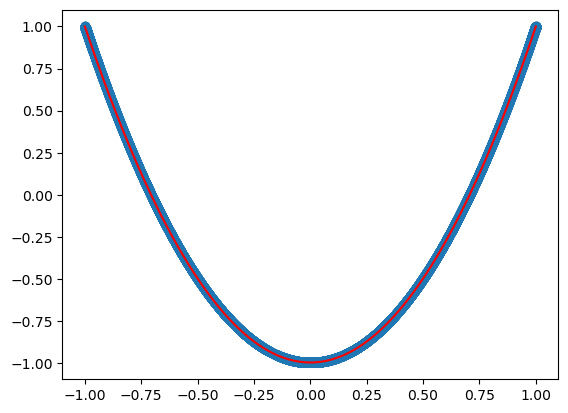}
 	\includegraphics[width=\www]{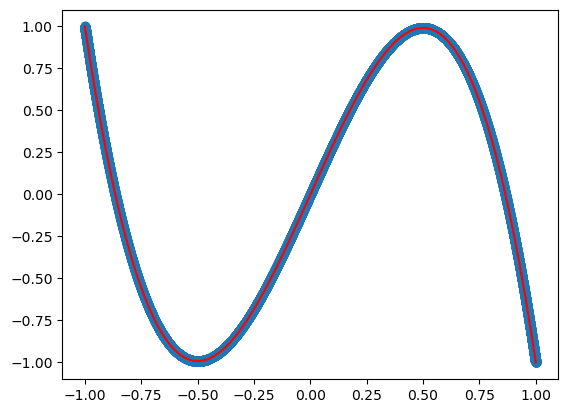}

 	\includegraphics[width=\www]{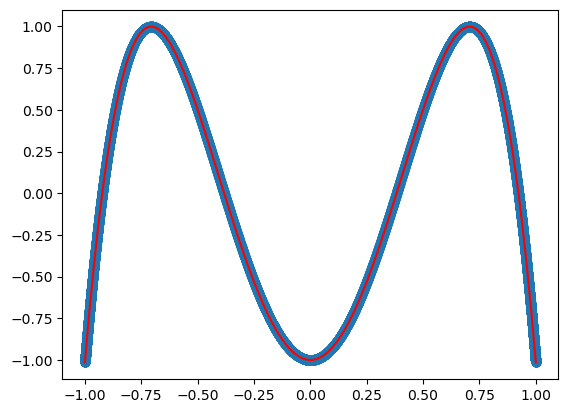}
 	\includegraphics[width=\www]{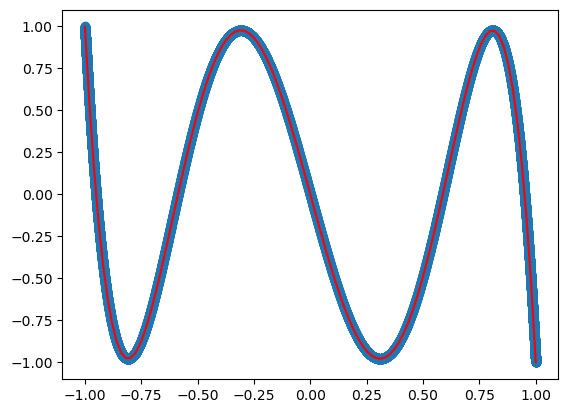}
 	\caption{Polynomial best fit curves to random eigenmaps:
 		\newline
 		$y = 2.0036x^2 + 8\cdot10^{-11}x -1.0000$ $\approx$ $T_2(x)=2x^2 -1$; 
 		\newline
 		$y = 3.9778x^3 + 10^{-10}x^2 -2.9780x -3\cdot10^{-11}$ $\approx$ $T_3(x) =4x^3 - 3x$; 
 		\newline
 		$y = 
 		+8.0288x^4 
 		+3\cdot10^{-9}x^3 
 		- 8.0149x^2 
 		- 10^{-10}x 
 		+1.0002$ \newline
 		 $\approx$ $T_4(x)= 8x^4-8x^2+1$;
 		\newline
 		$y = 15.7055x^5 -
 		6\cdot10^{-8}x^4 -19.5986x^3 + 4\cdot10^{-8}x^2 + 4.8916x -2\cdot10^{-9}$  $\approx$ 
 		$T_5(x)=16x^5-20x^3+5x$.}
 	\label{fig-best-fit}\end{figure}

  \begin{figure}[h] 
  	\centering
  	
  	\includegraphics[width=\www]{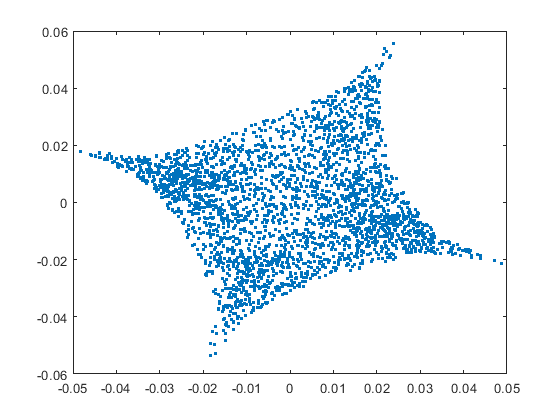}
  	\includegraphics[width=\www]{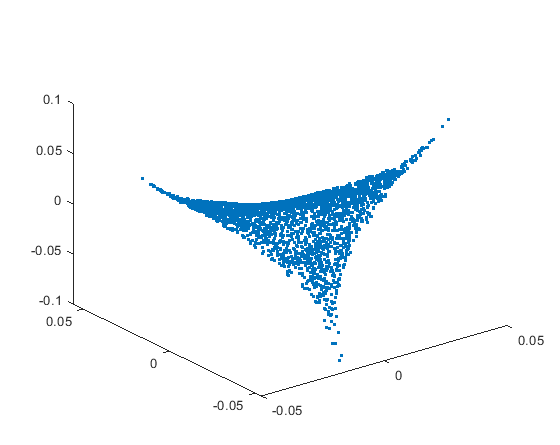}
  	
  	\includegraphics[width=\www]{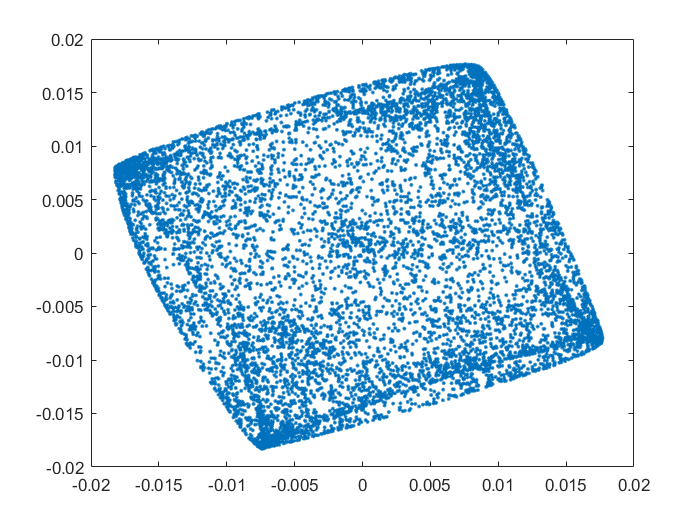}
  	\includegraphics[width=\www]{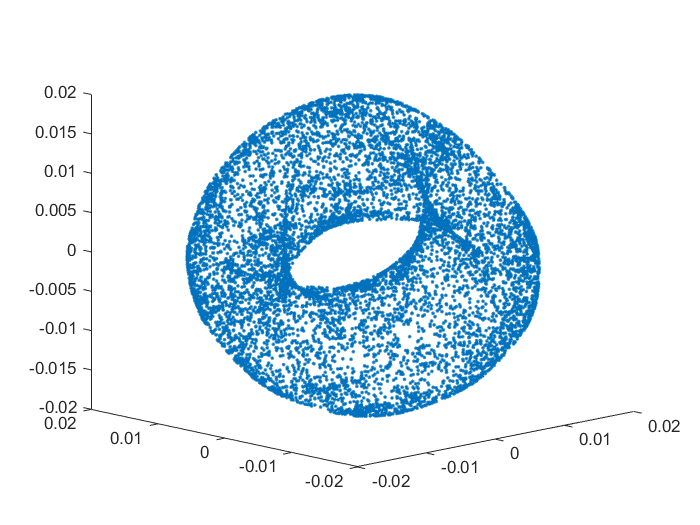}
  	
  	\includegraphics[width=\www]{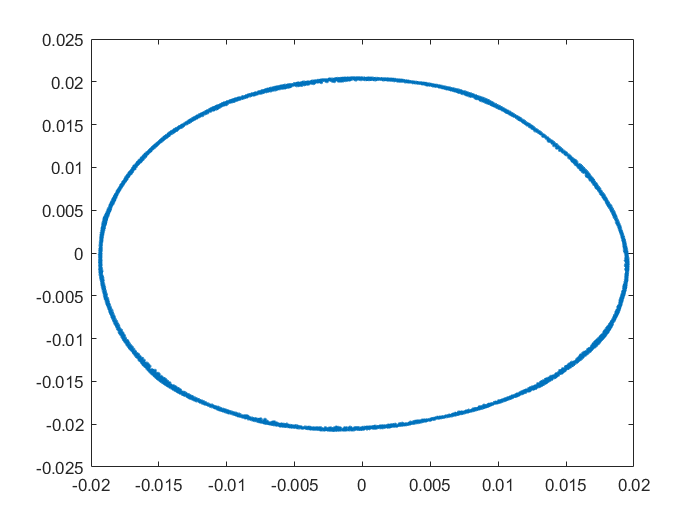}
  	\includegraphics[width=\www]{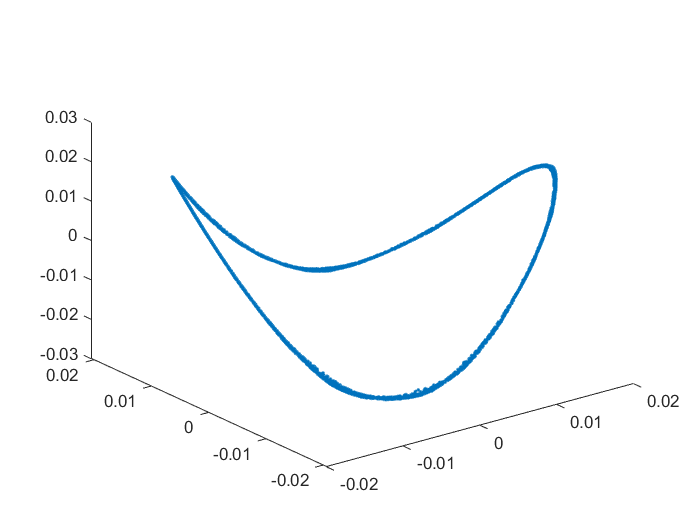}
  	
  	\caption{
  		Illustrations to  2- and  3-dimensional eigenmaps:       a square with  $n = 3000, \varepsilon = 0.7$ (top);  a torus with $n = 10000, \varepsilon = 0.5$ (middle);   a thin   torus with $n = 5000, \varepsilon = 0.1$ (bottom).}
  	\label{sttt-fig}\end{figure}

\section{Framework}

\subsection{Laplacian Operators}

Our setting is a metric space $(X,d)$ with a Borel   measure $\mu$.  We are interested in several types of ``Laplacian'' operators that can be applied to functions on $X$.  The first is a generalization of the Laplace-Beltrami operator on a manifold.  Although these operators have a deep and complicated theory, we will mostly concern ourselves with the case of   weighted Laplace-Beltrami operators, possibly with boundaries, on manifolds and with Laplacians associated to resistance forms in the sense of~\cite{Kigamibook,FOT,BGL}.

\begin{df}\label{def:lap}
A Laplacian $\mathcal{L}$ on $(X,d,\mu)$ is a non-positive self-adjoint Markovian operator with dense domain in $L^2(\mu)$; $\mathcal{L}$  generates a (heat) semigroup $P_t=e^{t\mathcal{L}}$. 
\end{df}

The second type of operator we consider gives the difference between the function value at a point and its average over a ball. The average may involve a kernel.  We call the result an averaging Laplacian.
\begin{df}\label{def:avlap}
Our averaging  Laplacians have the following forms
\begin{itemize}
\item For $\epsilon>0$ and each $x\in X$ specify a set $N_x$ containing $x$ and of diameter at most $2\epsilon$, and set
\begin{equation*}
\mathcal{L}_\epsilon = \frac{1}{\mu(N_x)} \int_{N_x} \bigl( u(y)-u(x) \bigr) d\mu(y).
\end{equation*}
We frequently use $N_x=B(x,\epsilon)$ the ball of radius $\epsilon$ centered at $x$, but sometimes other choices are more natural.
\item For a kernel  $K(x,y)\geq0$ so $\int K(x,y)d\mu(y)=1$ let 
\begin{equation*}
\mathcal{L}_K = \int K(x,y) \bigl( u(y)-u(x) \bigr) d\mu(y).
\end{equation*}
\item In the special case that $K(x,y)=k(d(x,y))$ set
\begin{equation*}
 K_\epsilon(x,y)=\frac{k(\epsilon^{-1}d(x,y))}{\int k(\epsilon^{-1}d(x,y)) d\mu(y)}
\end{equation*}
and write $\mathcal{L}_{K,\epsilon}$ instead of $\mathcal{L}_{K_\epsilon}$.
\end{itemize}
Note that if the $N_x$ are balls of radius $\epsilon$ then $\mathcal{L_\epsilon}=\mathcal{L_{K,\epsilon}}$ for $K(x,y)=\mu(B(x,1))^{-1} \mathbf{1}_{B(x,1)}(y)$.
\end{df} 

The remaining types of operators we consider are graph Laplacians.
\begin{df}\label{def:abstractgraphlap}
A graph is a finite set of vertices $\{x_1,\dotsc,x_n\}$ with edge weights $w_{i,j}\geq0$ from $x_i$ to $x_j$ for $i\neq j$.  We let $w_{i,i}=-\sum_{j=1}^n w_{i,j}$ and define the graph Laplacian to be the matrix $L=-[w_{i,j}]_{i,j=1}^n$.  Evidently $L$ determines and is determined by the graph, and by definition $L\mathbf{1}=0$ so constants are in the kernel of $L$.
\end{df}

\begin{df}\label{def:graphlaplacians}
For a finite set $S_n=\{x_1,\dotsc,x_n\}\subset X$ we consider the following graph Laplacians.  Notice that they are defined at all $x\in X$ rather than just at the set $S_n$, because at each $x$ we consider a graph on $S_n\cup\{x\}$.  However, this does mean that the graph used at $x$ depends on $x$.
\begin{itemize}
\item For $\epsilon>0$ let $E_\epsilon(x)=\{x_j\in S_n:d(x,x_j)\leq\epsilon\}$. Provided $\epsilon$ is large enough that $E_\epsilon(x)$ is non-empty for each $x$, define
\begin{equation*}
\mathcal{L}_{\epsilon,n} f(x) =  \frac1{\# E_\epsilon(x)} \sum_{E_\epsilon(x)} f(x_j)-f(x).
\end{equation*}
Note that this is simply the graph Laplacian on $S_n\cup\{x_0\}$ with $x_0=x$ and with edges from between $x_i$ to $x_j$ given by $w_{i,j}=(\# E_\epsilon(x))^{-1}$ if $d(x,y)\leq \epsilon$ and $w_{i,j}=0$ otherwise.
\item If $K(x,y)\geq 0$ is a kernel  we define $L_{K,n}$ to be the graph Laplacian on $S_n\cup\{x_0\}$ with weights $w_{i,j}=\Bigl( \sum_{l=1}^n K(x_i,x_l) \Bigr)^{-1}K(x_i,x_j)$ for $i\neq j$.
\item If $K$ is a kernel so $K(x,y)=k(d(x,y))$ we define $\mathcal{L}_{K,\epsilon,n} = \mathcal{L}_{K_\epsilon}$  where 
\begin{equation*}
K_\epsilon(x_i,x_j)=\Bigl( \sum_{l=1}^n k(\epsilon^{-1}d(x_i,x_l)) \Bigr)^{-1} k(\epsilon^{-1}d(x_i,x_j)).
\end{equation*}
\end{itemize}
In the original literature the kernel $K$ is often taken to be a Gaussian.  We often think of the situation where $S_n$ as the first $n$ elements of a sample set $S$, so that the above defines a sequence of operators indexed by $n$.
\end{df}

Finally, we consider the situation where the sample sets $S_n$ are random.
\begin{df}\label{def:randomgraphLap}
If $(x_1,\dotsc,x_n)$ is a random variable on $(\Omega,d\mathbb{P})$ with values in $X^n$ we consider $S_n(\omega)=(x_1,\dotsc,x_n)(\omega)$ as an $n$-point random sample from $X$ and define operators $\mathcal{L}_{\epsilon,n}(\omega)$, $\mathcal{L}_{K,n}(\omega)$ and $\mathcal{L}_{K,\epsilon,n}(\omega)$ as in the preceding definition.
\end{df}

\subsection{Main Questions and some known results}\label{ssec:questionsandresults}

As described in the introduction, the main theoretical question of interest is to describe circumstances under which we can approximate a Laplacian operator $\mathcal{L}$ by graph Laplacians.  This is usually done by approximating $\mathcal{L}$ by (rescaled) averaging Laplacians $\mathcal{L}_{K,\epsilon}$ and then  approximating  $\mathcal{L}_{K,\epsilon}$  by a sequence of graph Laplacians $\mathcal{L}_{K,\epsilon,n}$ or random graph Laplacians $\mathcal{L}_{K,\epsilon,n}(\omega)$.  Several main questions arise.

\begin{enumerate}
\item Under what circumstances can one rescale averaging Laplacians $\mathcal{L}_{K,\epsilon}$ so that they converge to a Laplacian $\mathcal{L}$?  This is known to be a very difficult problem in general.  In the case that $\mathcal{L}$ is a Laplace-Beltrami operator on a Riemannian manifold with some additional (quite non-trivial) geometric structure, one finds that for suitable kernels $K$ there is convergence of $\epsilon^{-2}\mathcal{L}_{K,\epsilon}f$ to $\mathcal{L}f$ for functions $f$ that are sufficiently smooth.  There are other examples of metric measure spaces for which the ``correct'' rescaling is $\epsilon^{-\beta}\mathcal{L}_{K,\epsilon}f\to\mathcal{L}f$ where $\beta\neq 2$; again, this is known for specific kernels, and it is significant that the simplest kernel involving the characteristic function of a ball with respect to the original metric may not be of the correct kind   In both cases the notion of ``smoothness'' of $f$ may be made using the operator $\mathcal{L}$, see Section~\ref{fractalsection} and Appendix~\ref{ARTsection}.
\item Under what circumstances will a sequence $\mathcal{L}_{K,\epsilon,n}f$ converge to  $\mathcal{L}_{K,\epsilon}f$?  This again involves the smoothness of the function $f$ in some manner, because we must approximate the averaged difference in $\mathcal{L}_{K,\epsilon}f$ by the discrete sampled average  $\mathcal{L}_{K,\epsilon,n}f$.  Both the nature of the kernel $K$ and the sampling set can play significant roles; typically their properties are related to one another. An example of a simple result of this kind is in Section~\ref{sec:graphtoaveraging} below.
\item If one can perform both of the preceding approximations for suitably smooth $f$, so that in \eqref{eqn:triangleest}
can one choose $\epsilon=\epsilon(n)$ so that $\epsilon(n)^{-\beta} \mathcal{L}_{K,\epsilon(n),n}f\to\mathcal{L}$ as $n\to\infty$?  If so, (how) can one find an optimal choice of $\epsilon(n)$?  Are there circumstances under which one can provide a sequence $\epsilon(n)$ so that the preceding convergence occurs with a guaranteed rate?
\item In the original problem of dimension reduction of a data set by a Laplacian eigenmap, the motivating idea was that mapping via a small number of eigenfunctions preserves geometric features of the set.  From this perspective, the important notion of convergence of $\epsilon^{-\beta} \mathcal{L}_{K,\epsilon,n}$ to $\mathcal{L}$ is that $\epsilon^{-\beta} \mathcal{L}_{K,\epsilon,n}f\to\mathcal{L}f$ for $f$ in the span of the first $m$  eigenfunctions, where $m$ is prescribed in advance.  Can any of the preceding questions about optimal $\epsilon(n)$ or rates of convergence be answered for this possibly simpler class of functions?
\item Considering a random sample set $S(w)$, what conditions on the sampling distribution provide convergence of the preceding types, and how do the choices of $\epsilon(n)$ or the bounds on rates of convergence depend on the this distribution?
\item Are there circumstances in which one has convergence of  $\epsilon^{-\beta} \mathcal{L}_{K,\epsilon(n),n}$ to $\mathcal{L}$ without an intermediate  $\epsilon^{-\beta} \mathcal{L}_{K,\epsilon}$ operator being useful/interesting/natural etc?
\end{enumerate}

There are many known results regarding these questions.  One of the simplest involves fixing a point $x$ and considering the convergence of a random graph Laplacian to the averaging Laplacian at $x$, for which one has a central limit theorem.  Note that this result is at a fixed $x$ and it is considerably more difficult to give a similar result that is uniform over $x$.
\begin{thm}\label{thm-monte-carlo}
Let $(\Omega, \mathcal{F}, \mu)$ be a probability space and let $K:\Omega \times \Omega \to \mathbb{R}$ be bounded.  Suppose $x_1,\dots, x_n$ are i.i.d random variables and define  for $f \in L^2(\Omega)$
\begin{equation*}
L_{K,n}f(x):=\frac{1}{n}\sum_j^n K(x,X_j)(f(x)-f(y)).
\end{equation*}
Then one has the following convergence in distribution to a standard normal $Z\sim N(0,1)$.
\begin{equation*}
	\frac{\sqrt{n}(L_{K,n}f(x)-\mathbb{E}(K(x,X_1)(f(X_1)-f(x))))}{\sqrt{\Var(K(x,X_1)(f(X_1)-f(x)))}} \to Z.
\end{equation*}
\end{thm}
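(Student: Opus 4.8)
The plan is to recognize that, for a fixed point $x$, the quantity $L_{K,n}f(x)$ is nothing more than the empirical mean of $n$ independent, identically distributed random variables, so that the statement is a direct application of the classical Lindeberg--L\'evy central limit theorem. The ``graph Laplacian'' packaging is cosmetic here: once $x$ is frozen, the $x$-dependence in $K(x,\cdot)$ and in $f(x)$ is just a fixed deterministic weight and a fixed constant.

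First I would fix $x$ (treating $f(x)$ as a fixed number; see the well-posedness remark below) and set
\[
Y_j := K(x,X_j)\bigl(f(x)-f(X_j)\bigr),\qquad j=1,\dots,n,
\]
so that by construction $L_{K,n}f(x)=\frac1n\sum_{j=1}^n Y_j=:\bar Y_n$ is a sample mean. Since $X_1,\dots,X_n$ are i.i.d.\ and each $Y_j=g(X_j)$ for the single fixed measurable map $g(\,\cdot\,)=K(x,\cdot)\bigl(f(x)-f(\cdot)\bigr)$, the $Y_j$ are themselves i.i.d.

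Second I would check the integrability hypotheses of the CLT. Because $K$ is bounded, $|Y_j|\le \|K\|_\infty\,|f(x)-f(X_j)|$, and since $f\in L^2(\mu)$ the variable $f(X_j)$ lies in $L^2$; hence $Y_j\in L^2$, with finite mean $m:=\E(Y_1)=\E\bigl(K(x,X_1)(f(x)-f(X_1))\bigr)$ and finite variance $\sigma^2:=\Var(Y_1)=\Var\bigl(K(x,X_1)(f(X_1)-f(x))\bigr)$, the variance being insensitive to the sign convention. Assuming $\sigma^2>0$ (otherwise $Y_1$ is a.s.\ constant and the normalized statement degenerates), Lindeberg--L\'evy gives
\[
\frac{\sqrt n\,(\bar Y_n-m)}{\sigma}\xrightarrow{\ d\ } Z\sim N(0,1),
\]
which is precisely the asserted convergence once one identifies $\bar Y_n=L_{K,n}f(x)$ and takes the centering to be $m=\E\bigl(K(x,X_1)(f(x)-f(X_1))\bigr)$.

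The genuinely delicate point is not the CLT but the \emph{well-posedness} of evaluating $f$ at the fixed point $x$: for $f\in L^2(\mu)$ the value $f(x)$ is only defined up to $\mu$-null sets, so to make the statement meaningful one must either fix a pointwise representative of $f$, restrict to $f$ continuous (or in a space with a canonical quasi-everywhere representative), or reinterpret $x$ as an additional independent sample. I expect this, together with verifying nondegeneracy $\sigma^2>0$ for the relevant $f$, to be the only issues requiring care; the probabilistic core reduces to a textbook CLT for i.i.d.\ summands that are bounded times an $L^2$ factor.
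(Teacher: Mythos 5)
Your proposal is correct and follows essentially the same route as the paper: both define $Y_j = K(x,X_j)\bigl(f(X_j)-f(x)\bigr)$, note these are i.i.d.\ with finite mean and variance, and invoke the Lindeberg--L\'evy central limit theorem. Your additional remarks on the well-posedness of $f(x)$ for $f\in L^2$ and the nondegeneracy of the variance are sensible points of care that the paper's brief proof leaves implicit.
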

\begin{proof}
Let $y_j = K(x,y_j)(f(y_j)-f(x))$, so the $y_j$ are i.i.d random variables and both $\mathbb{E}(y_1)=\mathbb{E}(K(x,x_1)(f(x_1)-f(x))$ and $\Var(Y_1)<\infty$.  The conclusion follows from Lindeberg-L\'evy CLT.
\end{proof}

Many of the earliest results regarding the performance of Laplacian eigenmaps involved random graph Laplacians with Gaussian edge weights on a $d$-dimensional Riemannian manifold with Laplace-Beltrami operator $\mathcal{L}$.  In our notation from Definition~\ref{def:graphlaplacians}, one has i.i.d. sample points $x_1,\dotsc, x_n$, a Gaussian kernel $K$ and the random graph Laplacian $\mathcal{L}_{K,\epsilon,n}(\omega)$, where usually $\epsilon=\epsilon(n)$, and the quantity to be estimated is
\begin{equation}\label{eqn:quantitytoestimate}
\bigl| \epsilon(n)^{-2}\mathcal{L}_{K,\epsilon(n),n}(\omega)f(x) - c\mathcal{L}f(x)\bigr|
\end{equation}
where $c$ is a normalizing constant.

In a paper~\cite{belkin2008towards} based on the thesis of Belkin,  
Belkin and Niyogi show that with $\epsilon(n)\asymp n^{-\frac2{d+2+\alpha}}$ one has convergence of~\eqref{eqn:quantitytoestimate} to zero in probabilty. Moreover, they show this estimate can be made uniform over $f$ in a  family of smooth functions with bounded derivatives of degree up to three at the cost of restricting to a subsequence.

At around the same time, Gine and Koltchinskii~\cite{gine2006empirical} studied a similar problem. Under the hypotheses
\begin{align*}
\frac{n \epsilon(n)^{d+2}}{-\log \epsilon(n)}\to\infty \quad \text{ and }\quad \frac{n\epsilon(n)^{d+4}}{-\log \epsilon(n)}\to 0
\end{align*}
they showed that, uniformly over both $f$ smooth with bounded derivatives of order up to three and uniformly over $x\in X$, one has almost surely the following bound for the quantity in~\eqref{eqn:quantitytoestimate}:
\begin{equation}\label{eqn:GKestimate}
	\bigl| \mathcal{L}_{K,\epsilon(n),n}(\omega)f(x) - c\mathcal{L}f(x)\bigr|= O\Bigl( \sqrt{\frac{-\log \epsilon(n)} {  n\epsilon(n)^{d+2}} }\Bigr).
\end{equation}
They also provide a functional central limit theorem for $\mathcal{L}_{K,\epsilon(n),n}(\omega)f(x) - c\mathcal{L}f(x)$ at each fixed $x$.

Earlier work of the same authors~\cite{koltchinskii2000random} is also significant in this context. They deal with the question of approximating a Hilbert-Schmidt operator $Hf=\int h(x,y)f(y)d\mu(y)$ on a probability space $(X,\mu)$, by taking i.i.d.\ samples $\{x_1,\dotsc,x_n\}$ and forming $H_n=\frac1n(1-\delta_{j,k})H(x_j,x_k)$ (the empirical matrix associated to $h(x,y)$ with the diagonal deleted).  The main result is that the spectrum of $H$ can be approximated by that of $H_n$.  Under some assumptions on the spectrum, the results include a law of large numbers, estimates on convergence rates, and a central limit theorem.  There is related work on spectral projectors~\cite{Koltchinskii} in the same context, see also~\cite{KoltchinskiiLounici}.

It is apparent that estimates on~\eqref{eqn:quantitytoestimate} are, in a sense, too strong for the purpose of understanding the performance of Laplacian eigenmaps, where one is primarily interested in the approximation of eigenfunctions with low eigenvalues, or equivalently spectral projectors onto the corresponding eigenspaces, rather than pointwise approximation estimates for the Laplacian.  If one permits an averaging Laplacian $\mathcal{L}_{K,\epsilon}$ in which the kernel is adapted to the space of interest, and considers its approximation by a suitably defined graph Laplacian $\mathcal{L}_{K,\epsilon,n}$ then one has much better estimates for $\mathcal{L}_{K,\epsilon}-\mathcal{L}_{K,\epsilon,n}$.  One of the main results of~\cite{rosasco-belkin-2010learning,koltchinskii2000random} is an estimate of this difference in Hilbert-Schmidt norm when $K$ is the reproducing kernel of a reproducing kernel Hilbert space that can be realized as a Sobolev space on the underlying manifold (see Theorem~15 of~\cite{rosasco-belkin-2010learning,koltchinskii2000random}).  The proof is based on concentration inequalities for Hilbert spaces.

An alternative approach to understanding the convergence of spectral projectors for graph Laplacians to that of the Laplace-Beltrami operator on a Riemannian manifold  of dimension $d$ has been taken by Burago, Ivanov and Kurylev~\cite{burago2015graph}.  They show that both the first $k$ eigenvalues and eigenfunctions of the Laplace-Beltrami operator can be approximated within $C(k,d)(\epsilon+ \frac r\epsilon)$, provided $\epsilon+ \frac r\epsilon$ is sufficiently small and $r\leq c(k,d)\epsilon$, by the eigenvalues and eigenfunctions of a weighted graph Laplacian $\mathcal{L}_{\epsilon,n}$.  They require that the graph vertices $x_1,\dotsc,x_n$ are well spaced in the sense that the manifold can be partitioned into equal measure subsets each lying in a single $B(x_j,r)$.  Since the number of partition sets is a multiple of $r^{-d}$, this requires $n\asymp r^{-d}$, so the error is bounded by $\epsilon(n)+ \frac{1}{\epsilon(n)n^{1/d}}$ and provides a bound for the error of size $n^{-\frac1{2d}}$.

In a subsequent work~\cite{burago2019spectral}, Burago, Ivanov and Kurylev consider an operator akin to $\epsilon^{-2}\mathcal{L}_\epsilon$ and give estimates of how its spectrum changes under certain perturbations of the underlying metric measure space. In particular, this permits approximation of the spectrum of this operator (which we would call a rescaled averaging Laplacian) by the corresponding operator for a discrete metric space, namely a graph Laplacian, using estimates on how close the discrete metric space is to the original one in the sense of Fukaya.  The techniques involve measure transportation on the underlying metric space, and should presumably generalize to averaging Laplacians with other rescalings, as may be expected to occur on more general metric-measure-Dirichlet spaces like fractals (see Conjectures~\ref{Sierpinski-conj} and~\ref{conj:SGlapbyrandgraphs} below).  They might also be combined with suitable theorems on the limiting distribution of i.i.d. points in metric spaces, such as those in~\cite{koltchinskii2000random} to obtain results for randomly sampled points.

\subsection{Optimal choice of $\epsilon$}

In the above discussion there are various choices for $\epsilon=\epsilon(n)$.  In the work of Belkin and Niyogi~\cite{belkin2008towards} one has for i.i.d.\ sample points on a manifold
\begin{equation*}
\epsilon(n) \asymp n^{- \frac1{d+2+\alpha}}\qquad \text{ for some } \alpha>0,
\end{equation*}
while in the same setting~\cite{gine2006empirical} require bounds 
 \begin{equation}
 n^{- \frac1{d+2}}\ll  \epsilon(n) \ll n^{- \frac1{d+4}}
 \end{equation} 
with high probability, if one is to obtain pointwise estimates on the quantity~\eqref{eqn:quantitytoestimate}.

 Similar questions can be asked in nonrandom situations.    For example, the Taylor estimate on a compact Riemannian manifold gives (see~\cite{gine2006empirical})
\begin{equation}
 |\mathcal L f- \mathcal L_{\epsilon} f| =O(\epsilon)\text{ as }\epsilon\to0.
 \end{equation}
Some of the authors of the present work have a corresponding result for weighted Riemannian manifolds with boundary, obtaining the same estimates for a limit Laplacian $\mathcal L$ that is an elliptic operator with boundary terms  (to be   published in \cite{ART}).  

Also in the non-random setting on a compact manifold, it is explained in~\cite{burago2015graph} that using Voroni cells to achieve a partition into equal measure subsets and suitably choosing vertices one can define a weighted Lapalcian such that  with $\epsilon(n)\asymp n^{-\frac1{2d}}$ one has approximation of the low eigenvalues and associated eigenprojectors of the Laplace-Beltrami operator by the graph Laplacian with error also of size $n^{-\frac1{2d}}$. 

\section {Uniform approximation 
	of averaging Laplacians on metric spaces}\label{sec:graphtoaveraging}

In this section we consider the uniform approximation $$\mathcal L_{\epsilon,n}\xrightarrow[n\to\infty]{}\mathcal L_{\epsilon}$$ of averaging Laplacians on metric spaces. 
It is often fairly easy to construct a weighted graph Laplacian that converges to a specified averaging Laplacian,  at least when it is applied to functions with some regularity.  The following two results give an simple construction in the special case that one knows how to partition the space into a union of sets of equal, and small, measure.  Note that this is analogous to the decomposition considered in~\cite{burago2015graph} and achieved on manifolds using Voroni cells; other decompositions with this property are also discussed there.

\begin{thm} \label{thm:discretelapbound}
Let $(X,\mathcal{F},\mu)$ be a measure space and fix $n\in\mathbb{N}$.  Suppose that for each $x\in X$  we have $N(x) \subset X$  a subset containing $x$ with $\mu(N(x))<\infty$ and assume it is possible to measure-theoretically equipartition $N(x)$ as $N(x)=\bigcup_{j=1}^n N_{j}(x)$ with $\mu(N_j(x))=\frac1n\mu(N(x))$ for each $j$.  Then choose  $x_j\in N_j(x)$ for $j=1,\dotsc,n$.
For any function $f$, Define
\begin{align*}
	L_nf(x)&= \frac{1}{n}\sum_{j=1}^n(f(x_j)-f(x))=\Bigl(\frac{1}{n}\sum_{j=1}^nf(x_j)\Bigr) -f(x),\text{ and} \\
	L_{N(x)}f(x)&=\frac{1}{\mu(N(x))}\int_{N(x)}(f(y)-f(x))d\mu(y).
\end{align*}
Then $\bigl|L_nf(x)-L_{N(x)}f(x)\bigr| \leq \sup_{j}\sup_{y\in N_j(x) } |f(x_j)-f(y)|$.
\end{thm}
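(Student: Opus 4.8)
The plan is to exploit the fact that both operators subtract the same quantity $f(x)$, so that $f(x)$ drops out of the difference, and then to represent both the discrete average and the integral average as sums of integrals over the common partition pieces $N_j(x)$, matched term-by-term by means of the equipartition hypothesis. The argument is a direct pointwise estimate; the equipartition is precisely what aligns the uniform discrete weights $\tfrac1n$ with the uniform measure weights $\mu(N_j(x))/\mu(N(x))$.

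First I would observe that, since $f(x)$ is constant in the variable of integration,
\[
L_{N(x)}f(x) = \frac{1}{\mu(N(x))}\int_{N(x)} f(y)\, d\mu(y) - f(x),
\]
so the two $f(x)$ terms cancel and
\[
L_nf(x) - L_{N(x)}f(x) = \frac{1}{n}\sum_{j=1}^n f(x_j) - \frac{1}{\mu(N(x))}\int_{N(x)} f(y)\, d\mu(y).
\]

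Next, using that $N(x)=\bigcup_{j=1}^n N_j(x)$ is a measure-theoretic disjoint partition and that $\mu(N_j(x))=\tfrac1n\mu(N(x))$, I would split the integral as $\int_{N(x)}=\sum_j\int_{N_j(x)}$ and replace $\tfrac{1}{\mu(N(x))}$ by $\tfrac{1}{n\,\mu(N_j(x))}$ on the $j$-th piece. At the same time, since $f(x_j)$ is a constant and $\tfrac{1}{\mu(N_j(x))}\int_{N_j(x)}d\mu=1$, each discrete term can be written as $\tfrac1n f(x_j)=\tfrac{1}{n\,\mu(N_j(x))}\int_{N_j(x)} f(x_j)\, d\mu(y)$. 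Combining the two representations gives the single expression
\[
L_nf(x) - L_{N(x)}f(x) = \sum_{j=1}^n \frac{1}{n\,\mu(N_j(x))}\int_{N_j(x)} \bigl(f(x_j) - f(y)\bigr)\, d\mu(y).
\]

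Finally I would pass to absolute values, apply the triangle inequality inside the sum, and bound $|f(x_j)-f(y)|\le \sup_{y\in N_j(x)}|f(x_j)-f(y)| \le \sup_j\sup_{y\in N_j(x)}|f(x_j)-f(y)|$ pointwise. Because this last bound is constant in $y$, each integral evaluates to $\mu(N_j(x))$ times the supremum, the factors $\mu(N_j(x))$ cancel, and the weights $\tfrac1n$ sum to $1$, yielding exactly the claimed estimate. I do not expect a serious obstacle: the whole argument is an exchange of a finite sum with an integral over a disjoint decomposition, made to balance by the equal-measure hypothesis. The only point worth keeping in view is that $f$ be integrable on $N(x)$ so that the averages are defined, which is automatic whenever the right-hand side is finite.
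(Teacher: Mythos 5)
Your proposal is correct and follows essentially the same route as the paper's proof: cancel the common $f(x)$ term, decompose the integral over the equal-measure pieces $N_j(x)$ so that each discrete term $\tfrac1n f(x_j)$ pairs with the average of $f$ over $N_j(x)$, and bound each paired difference by the oscillation $\sup_{y\in N_j(x)}|f(x_j)-f(y)|$. No gaps; your write-up is if anything slightly more careful than the paper's (which has a harmless typo writing $f(x)$ for $f(x_j)$ in one intermediate line).
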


\begin{proof}
Since $\mu(N(x))=n\mu(N_j(x))$ for each $j=1,\dotsc,n$ and the equipartition condition ensures $\mu(N_j(x)\cap N_k(x))=0$ when $j\neq k$
	\begin{align*}
	\bigl| L_nf(x)-L_{N(x)}f(x)\bigr|  &= \Bigl| \frac1n \sum_{j=1}^nf(x_j)-\frac{1}{\mu(N(x))}\int_{N(x)}f(y) d\mu(y) \Bigr|\\
	&=\frac1n \Bigl| \sum_{j=1}^n f(x_j)-\sum_{j=1}^n \frac{1}{\mu(N_j(x))}\int_{N_j(x)}f(y) d\mu(y) \Bigr| \\
	&\leq \frac1n\sum_{j=1}^n \frac{1}{\mu(N_j(x))}\int_{N_j(x)}\bigl| f(x)-f(y)\bigr| d\mu(y)\\
	& \leq \frac1n\sum_{j=1}^n\sup_{y\in N_j(x)}\bigl| f(x_j)-f(y)\bigr|\\
	& \leq \sup_j\sup_{y\in N_j(x)}\bigl| f(x_j)-f(y)\bigr| \qedhere
	\end{align*}  
\end{proof}

It is evident that the preceding result is only useful if we know there is a decomposition $\{N_j(x)\}$ such that $|f(x_j)-f(y)|$ is small on each $N_j(x)$ whenever $f$ is a function in the relevant function class.  Since we are mostly interested in approximating eigenfunctions and these typically have some regularity that can be expressed using a H\"older or Besov-Lipschitz norm, it is natural to think that we should try to make the sets $N_j(x)$  have small diameter. One reason we stated the preceding theorem is that a measure theoretic decomposition of the preceding type using small diameter sets is readily achieved on self-similar spaces, like the unit interval, the unit square, and the Sierpinski gasket studied in Section~\ref{fractalsection} below.

The property that one can decompose sets into equal small measure pieces is typical of self-similar spaces.  Let us say a metric measure space $(X,d,\mu)$ admits a measure-theoretic equipartition of diameter $\sigma>0$ if we can write $X=\bigcup_\eta X_\eta$ for measurable sets $X_\eta$ each having diameter at most $\sigma$ and satisfying $\mu(X_\eta)=\mu(X_{\eta'})$ and $\mu(X_\eta\cap X_{\eta'})=0$ for all $\eta\neq\eta'$.  Also say $X$ admits measure-theoretic equipartitions of arbitrarily small diameter if there is a sequence $\sigma_j\to0^+$ so that $X$ admits a measure-theoretic equipartition of diameter $\sigma_j$ for each $j$.

\begin{cor}\label{cor:graphlapapproxavlapifmeasurethpartitions}
Suppose $(X,d,\mu)$ is connected, of diameter at least $1$, and admits measure-theoretic equipartitions of arbitrarily small diameter.  Fix $\epsilon>0$ and $n\in\mathbb{N}$. Following Definition~\ref{def:avlap} one can define an averaging Laplacian operator $\mathcal{L}_\epsilon f(x)$ over sets of diameter at most $\epsilon$ and then a graph Laplacian operator $\mathcal{L}_{\epsilon,n}$ such that 
\begin{equation*}
\bigl|\mathcal{L}_\epsilon f(x) - \mathcal{L}_{\epsilon,n}f(x) \bigr|
\leq \sup_x \sup_{y\in B(x,\frac\epsilon n)} |f(x)-f(y)|
\end{equation*}
In particular, if $f$ is H\"older with exponent $\alpha$ then $\bigl|\mathcal{L}_\epsilon f(x) - \mathcal{L}_{\epsilon,n}f(x) \bigr|\leq C \bigl(\frac\epsilon n\bigr)^\alpha$.
\end{cor}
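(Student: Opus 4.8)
The plan is to realize the corollary as a uniform-in-$x$ application of Theorem~\ref{thm:discretelapbound}. That theorem already produces, for a \emph{single} neighborhood $N(x)$ equipartitioned into $n$ equal-measure pieces $N_j(x)$ with marked points $x_j\in N_j(x)$, the estimate $|L_nf(x)-L_{N(x)}f(x)|\le \sup_j\sup_{y\in N_j(x)}|f(x_j)-f(y)|$. So the entire content of the corollary reduces to a geometric construction: for every $x\in X$ exhibit a set $N(x)\ni x$ with $\diam(N(x))\le\epsilon$ that is equipartitioned into $n$ pieces $N_j(x)$, each of common measure and of diameter at most $\epsilon/n$. Given such $N(x)$, I set $\mathcal L_\epsilon=L_{N(x)}$ (an averaging Laplacian over sets of diameter $\le\epsilon$, as in Definition~\ref{def:avlap}) and $\mathcal L_{\epsilon,n}=L_n$ (a graph Laplacian as in Definition~\ref{def:graphlaplacians}); the bound on $\diam(N_j(x))$ then upgrades the Theorem~\ref{thm:discretelapbound} estimate to the claimed one, since $x_j,y\in N_j(x)$ forces $y\in B(x_j,\epsilon/n)$, whence $\sup_j\sup_{y\in N_j(x)}|f(x_j)-f(y)|\le \sup_x\sup_{y\in B(x,\epsilon/n)}|f(x)-f(y)|$.

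To build $N(x)$ I will use a single fine equipartition of the whole space. Fix $\rho=\min(\epsilon/4,1/2)$ and, using the hypothesis of equipartitions of arbitrarily small diameter, select a scale $\sigma\le\rho/n$ together with a partition $X=\bigcup_\eta X_\eta$ into equal-measure pieces of diameter at most $\sigma$. For a given $x$ I take $N(x)$ to be the union of exactly $n$ of these pieces, all meeting $B(x,\rho)$ and one of them containing $x$. Any such choice has the required properties: the pieces share a common measure $m$, so $\mu(N_j(x))=m=\frac1n\mu(N(x))$ with pairwise measure-zero overlaps; each has diameter $\le\sigma\le\epsilon/n$; and since a piece meeting $B(x,\rho)$ lies in $B(x,\rho+\sigma)$, the union has diameter at most $2(\rho+\sigma)\le 4\rho\le\epsilon$.

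The one point needing argument — and the step I expect to be the crux — is that for \emph{every} $x$ there really are at least $n$ equipartition pieces meeting $B(x,\rho)$, so that the selection is possible uniformly. Here I use connectivity together with $\diam(X)\ge1$. The map $y\mapsto d(x,y)$ is continuous, so its image over the connected space $X$ is an interval; it contains $0$ and, since $\sup_y d(x,y)\ge \tfrac12\diam(X)\ge\tfrac12\ge\rho$, it contains all of $[0,\rho]$. Thus every value in $[0,\rho)$ is attained at some $y\in B(x,\rho)$, so the distance-image of $B(x,\rho)$ covers $[0,\rho)$. Each piece contributes distances lying in a set of diameter at most $\sigma$, because $|d(x,y)-d(x,y')|\le d(y,y')\le\sigma$ on a piece; covering an interval of length $\rho$ therefore requires at least $\rho/\sigma\ge n$ pieces meeting $B(x,\rho)$. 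This guarantees the construction simultaneously for all $x$.

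Finally the Hölder statement is immediate: if $|f(x)-f(y)|\le C\,d(x,y)^\alpha$ then $\sup_{y\in B(x,\epsilon/n)}|f(x)-f(y)|\le C(\epsilon/n)^\alpha$ uniformly in $x$, which yields $|\mathcal L_\epsilon f(x)-\mathcal L_{\epsilon,n}f(x)|\le C(\epsilon/n)^\alpha$.
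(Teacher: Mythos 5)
Your proposal is correct and follows essentially the same route as the paper: reduce to Theorem~\ref{thm:discretelapbound} by taking $N(x)$ to be a union of exactly $n$ cells of a single fine global equipartition near $x$, and use connectedness together with $\diam(X)\ge 1$ and the $1$-Lipschitz function $y\mapsto d(x,y)$ to guarantee that $n$ distinct cells are available; the paper finds them by placing points at distances $j\epsilon/n$ so that no two share a cell, while you count cells needed to cover the interval $[0,\rho)$ of attained distances --- the same pigeonhole in two guises. One cosmetic caveat: the resulting $\mathcal{L}_{\epsilon,n}$ is a graph Laplacian in the abstract sense of Definition~\ref{def:abstractgraphlap}, but, as the paper itself points out, not literally the operator of Definition~\ref{def:graphlaplacians}, since the selected cells need not exhaust all sample points within distance $\epsilon$ of $x$.
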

\begin{proof}
Fix $\delta>1$.  By assumption there is a measure-theoretic equipartition $X=\bigcup X_\eta$ in which all $X_\eta$ have diameter at most $\frac\epsilon {\delta n}$.  For each $\eta$, fix $x_\eta\in X_\eta$, arranging that $x_\eta\neq x_{\eta'}$ if $\eta\neq\eta'$.

Since  $X$ is connected and diameter at least $1$ there is $y_t\in X$ with $d(x,y_t)=t$ if $t\leq \epsilon<1$ (as otherwise $B(x,t)$ and the complement of its closure is a disconnection).  Take the points $y_{\frac{j\epsilon}n}$ with $0\leq j\leq n-1$.  Each is in some $X_\eta$, and since any two are separated by at least distance $\frac\epsilon n$ and $X_\eta$ has diameter $\frac\epsilon{\delta n}$, no two are in the same $X_\eta$.  Moreover, each of these $X_\eta$ is contained in $B(x,\epsilon)$.  It follows that we can find a set $S(x)=\{\eta_1,\dotsc, \eta_n\}$ containing exactly $n$ points so all $X_{\eta_j}$ are distinct and inside $B(x,\epsilon)$. 
 
We let $N(x)=\bigcup_{\eta\in S(x)} X_{\eta}$.  Then $N(x)$ is diameter at most $\epsilon$ and has positive measure, so Definition~\ref{def:avlap} gives simply $\mathcal{L}_\epsilon f(x)=L_{N(x)}f(x)$, the averaging operator of  Theorem~\ref{thm:discretelapbound}.  However, by construction $N(x)$ can also be measure-theoretically equipartitioned as in Theorem~\ref{thm:discretelapbound}, so we can define $\mathcal{L}_{\epsilon,n}f(x)=\frac1n\sum_{\eta\in S(x)} f(x_\eta)-f(x)=L_nf(x)$ in the notation of the theorem.  Notice that this is not the same as the graph Laplacian in Definition~\ref{def:graphlaplacians} because $\{x_\eta:\eta\in S(x)\}\subset \{\eta:d(x,x_\eta)\leq\epsilon\}$ but the inclusion may be strict.

In applying Theorem~\ref{thm:discretelapbound} we see that the bound is by the oscillation of $f$ over partition sets.  In this case, these are precisely the sets $X_\eta$ and have diameter not exceeding $\frac\epsilon {\delta n}$. It follows that we can replace it by the oscillation of $f$ over open balls of radius $\frac\epsilon n$, which gives the stated result.
\end{proof}

In metric measure spaces with doubling properties (of balls or of measures) there are partitioning results (eg the ``dyadic cubes'' of Christ~\cite{christ1990b}) which decompose the space into disjoint sets with controlled diameter and measure, but the measures of sets in the partition are not exactly the same and therefore the above argument cannot be applied to these partitions.  It would be useful to have a more flexible argument that could apply to this more general situation, perhaps allowing the possibility that the graph Laplacians converge to an average with respect to a kernel rather than simply an average over a specified set $N(x)$.  If the latter is needed, we would also need some control on the behavior of the kernel.  As we shall see, the types of kernels that are useful are those that integrate to zero against the first order terms of a ``Taylor expansion'' in the space, when such an expansion can be suitably defined.

We remark that the argument given here is far from the only way to analyze the convergence of graph Laplacians to averaging operators.  For a different approach considering the convergence of spectra of graph Laplacians to the spectra of a certain type of averaging Laplacian see~\cite{burago2015graph,burago2019spectral}.

\section{Random and nonrandom sampling from $[-1,1]$}

Although the interval $[-1,1]$ with Lebesgue measure is an elementary example, to the best of our knowledge the behavior of the Laplacian eigenmap corresponding to (random or non-random) sample points on this interval cannot be immediately deduced from  general results in the literature. This is because the general results we are aware of for manifolds deal with the  Laplace-Beltrami operator on a compact Riemannian manifold without boundary, or an open subset of such a    manifold that is separated from the boundary by a distance that is bounded below, neither of which is the case for $[-1,1]$.  
Moreover, $[-1,1]$ fairly immediately demonstrates the importance of boundary conditions for the problem at hand: we discuss below the fact that with Neumann or Dirichlet boundary conditions the Laplacian eigenmap takes $[-1,1]$ to an arc, whereas with period boundary conditions the image under the eigenmap is a topological circle.  In what follows, we use the interval as an elementary testbed for beginning to understand the behavior of Laplacian eigenmaps on a manifold with boundary.

On this example we consider the following questions from Section~\ref{ssec:questionsandresults}.  Can one approximate a Laplacian $\mathcal{L}$ by $\epsilon^{-\beta}\mathcal{L}_{\epsilon}$ on a natural class of functions? If so, what is the class of functions and can we prove a rate of convergence for this class?   Can one approximate $\mathcal{L}_{\epsilon}$ by graph Laplacians corresponding to uniformly spaced points, or to randomly spaced points?  If so, what is the rate of convergence and in the random cases how does it depend on the distribution of the random points?

\subsection{Laplacians}\label{ssec:IntervalLaplacians}
It is possible to characterize all Dirichlet forms on $[-1,1]$ and thus all Laplacians that could be considered on $[-1,1]$; for some discussion of this see~\cite[Examples~1.2.2 and~1.3.1 and~1.3.2]{FOT}.  Here we limit ourselves to some simple subclasses of those Laplacians which arise as self-adjoint extensions of  the weak second derivative $A=-\frac{d^2}{dx^2}$ with domain $C^2_0(-1,1)$, which is a closed symmetric operator on $L^2(dx)$. $A$ is invariant under complex conjugation, so has self-adjoint extensions by von Neumann's theorem (\cite{ReedSimon} Theorem~X.3). These may be found by elementary means. If $B$ is a symmetric extension of $A$ and $f\in\dom(B)$, $g\in\dom(B^*)$ then 
\begin{align}
0=\langle  Bf, g \rangle -  \langle f, B^\ast g \rangle =\int_{-1}^1 -f'' g  +  f g''
&=\int_{-1}^1 \frac{d}{dx}( g f' -f g') \notag\\
&= ( g f' -fg')(1)- ( g f' -f g')(-1) \label{eqn:selfadjextensionsof2ndderiv}
\end{align}
and we see that the self adjoint extensions of $A$ are those with domain the absolutely continous functions on $[-1,1]$ (written $f\in\AC[-1,1]$) subject to boundary conditions for $f$ having the property that~\ref{eqn:selfadjextensionsof2ndderiv} implies these same boundary conditions are true for $g$. The details are similar to those treated in Examples 1 and 2 in Section~X.1 of~\cite{ReedSimon}, so we omit them.  Instead we give some classical examples that illustrate this idea.

\begin{itemize}
\item
The Dirichlet Laplacian is $\frac{d^2}{dx^2}$ with domain $\{f\in\AC[-1,1]: f(-1)=f(1)=0\}$. Rewriting~\ref{eqn:selfadjextensionsof2ndderiv} in the form $fg'(1)-fg'(-1) = gf'(1)-gf'(-1)$ and noting that $f'(-1)$, $f'(1)$ can take any value we see $ f(-1)=f(1)=0$ implies $g(-1)=g(1)=0$ so this is a self-adjoint extension of $A$.
\item By similar reasoning the Neumann Laplacian  $\frac{d^2}{dx^2}$ with domain $\{f\in\AC[-1,1]: f'(-1)=f'(1)=0\}$ is self-adjoint because the boundary condition and~\ref{eqn:selfadjextensionsof2ndderiv} readily shows $g'(-1)=g'(1)=0$.
\item The preceding are boundary conditions in which the values of $f,f'$ at the endpoint $-1$ are unrelated to those at the endpoint $1$. The most general such conditions have $f'(-1)=af(-1)$ and $f'(1)=bf(1)$ for some $a,b\in\mathbb{R}\cup\{\infty\}$ (the Neumann case is $a=b=0$ and the Dirichlet case is $a=b=\infty$) and lead to a Robin Laplacian  $\frac{d^2}{dx^2}$ with domain $\{f\in\AC[-1,1]: f'(-1)=af(-1),\,f'(1)=bf(1) \}$.
\item One can also obtain self-adjoint operators for which the boundary conditions at $-1$ and $1$ are related. Perhaps the most familiar arises from periodic boundary conditions to obtain $\frac{d^2}{dx^2}$ with domain $\{f\in\AC[-1,1]: f(-1)=f(1),\, f'(-1)=f'(1)\}$, and are equivalent to considering the second derivative on a circle.  However, other choices are possible; the reader may care to check that $\frac{d^2}{dx^2}$ can be made self-adjoint with domain  $\{f\in\AC[-1,1]:f(-1)=-f(1)\, f'(-1)=-f'(1)\}$ or with domain $\{f\in\AC[-1,1]:f(-1)=f(1)=0\, f'(-1)=-f'(1)\}$, for example. 
\end{itemize}
 
For any specified set of boundary values it is easy enough to compute the eigenvalues and eigenfunctions.  The solutions of $\phi''(x)=-\lambda^2 \phi(x)$ are linear combinations $A e^{ i\lambda x}+Be^{-i\lambda x}$.  For example, assuming Robin boundary conditions $\phi'(-1)=a\phi(-1)$, $\phi'(1)=b\phi(1)$ hold for real $a,b$ gives a linear system
\begin{align*}
i\lambda (A- Be^{2i\lambda})&=a(A+Be^{2i\lambda})\\
i\lambda(A-Be^{-2i\lambda})&=b(A+Be^{-2i\lambda})
\end{align*}
so that the eigenvalues are roots of $e^{4i\lambda}=\frac{(a-i\lambda)(b+i\lambda)}{(a+i\lambda)(b-i\lambda)}$, or equivalently $\lambda=\arg(b+i\lambda)-\arg(a+i\lambda)$, and one can solve for $A$ and $B$ to obtain the corresponding eigenfunction.  The special case $a=b=0$ (the Neumann Laplacian) has $\lambda=\frac 12k \pi$ hence eigenvalues $\bigl(\frac 12 k\pi \bigr)^2$ for $k\in\mathbb{N}\cup\{0\}$ and corresponding (real-valued) eigenfunctions $\cos \frac12(k \pi x)$ if $k$ is even and $\sin \frac12(k \pi  x)$ for $k$ odd.  The Dirichlet Laplacian has the same eigenvalues except that $0$ is not an eigenvalue, so the spectrum is $\{\bigl(\frac 12 k\pi \bigr)^2:k\in\mathbb{N}\}$,  but the (real-valued) eigenfunctions are $\sin \frac12(k \pi x)$ for $k$ even and $\cos \frac12(k \pi x)$ for $k$ odd.

Boundary conditions that involve data at both endpoints maybe treated in a similar way, though the linear system is different.  For example, the periodic boundary conditions described above lead to the system
\begin{align*}
Ae^{-i\lambda} + Be^{i\lambda}&=Ae^{i\lambda} + Be^{-i\lambda}\\
i\lambda(Ae^{-i\lambda} - Be^{i\lambda}) &= i\lambda( Ae^{i\lambda} - Be^{-i\lambda})
\end{align*}
which has non-trivial solutions for $\lambda=\pi k$, $k\in\mathbb{N}\cup\{0\}$. Thus the are eigenvalues $(\pi k)^2$, with a two-dimensional eigenspace  spanned by (real-valued) eigenfunctions $\sin(k\pi x)$ and $\cos(k\pi x)$ for each $k\geq 1$ and a one-dimensional eigenspace with constant eigenfunction when $k=0$.

From the preceding we can easily write the Laplacian eigenmap using the two non-constant eigenfunctions with smallest eigenvalue in our basic cases.  For the Neumann Laplacian these eigenfunctions (corresponding to $k=1,2$ are $\sin \frac12(\pi x)$ and  $\cos (\pi x)=1-2\sin^2\frac12(\pi x)$, so the eigenmap takes $[-1,1]$ to an arc on the parabola $\{(y,1-4y^2)\}\subset\mathbb{R}^2$.  For the Dirichlet Laplacian we again have $k=1,2$, now with eigenfunctions $\cos\frac12(\pi x)$ and $\sin(\pi x)=2(\sin\frac12\pi x)(\cos\frac12\pi x)$, so the eigenmap takes $[-1,1]$ to an arc of the curve $\{(y,2y\sqrt{1-y^2}\}\subset\mathbb{R}^2$.  For the periodic Laplacian we have only $k=1$ because the eigenspace is two dimensional; the eigenfunctions are $\sin \pi x$ and $\cos \pi x$ and the image of the interval is a circle.

\subsection{Averaging Laplacians and their convergence}

For $\epsilon>0$ we use the notation $\tilde{B}(x,\epsilon)=(x-\epsilon,x+\epsilon)\cap[-1,1]$ and write $|\tilde{B}(x,\epsilon)|$ for its Lebesgue measure, then set 
\begin{equation*}
	\mathcal{L}_\epsilon f(x)=|\tilde{B}(x,\epsilon)|^{-1}\int_{\tilde{B}(x,\epsilon)} f(y)-f(x) dy
\end{equation*}
for $x\in[-1,1]$.

\begin{thm}\label{thm:Lepstof''}
If $f'$ is absolutely continuous on $[-1,1]$ and $f'(-1)=f'(1)=0$ then
\begin{equation*}
\lim_{\epsilon\to0} \epsilon^{-2}\mathcal{L}_\epsilon f(x)=\frac16 f''(x)
\end{equation*}
at a.e. $x\in[-1,1]$. If $f''$ is continuous this limit is valid everywhere, and if $f''$ is H\"older with exponent $\alpha\in(0,1]$ then the convergence is uniform with $|\epsilon^{-2}\mathcal{L}_\epsilon f(x)-\frac16f''(x)|\leq C\epsilon^\alpha$ for some $C>0$.
\end{thm}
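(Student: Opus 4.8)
The plan is to reduce everything to a second-order Taylor expansion of $f$ about the base point $x$, splitting the analysis according to whether $x$ lies in the interior (at distance at least $\epsilon$ from $\{-1,1\}$) or in the boundary layer within distance $\epsilon$ of an endpoint.

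For interior $x$ one has $\tilde B(x,\epsilon)=(x-\epsilon,x+\epsilon)$ and $|\tilde B(x,\epsilon)|=2\epsilon$, so
\[
\epsilon^{-2}\mathcal L_\epsilon f(x)=\frac1{2\epsilon^{3}}\int_{-\epsilon}^{\epsilon}\bigl(f(x+u)-f(x)\bigr)\,du .
\]
In $f(x+u)-f(x)=f'(x)u+\tfrac12 f''(x)u^{2}+R(u)$ the first-order part integrates to zero by symmetry of the domain, while the quadratic term contributes exactly $\tfrac16 f''(x)$ after dividing by $2\epsilon^{3}$. Under the weakest hypothesis $f'\in\AC$ I would avoid pointwise Taylor: discarding the odd part, write $f(x+u)+f(x-u)-2f(x)=\int_0^{u}\int_{-t}^{t}f''(x+r)\,dr\,dt$ and use that a.e.\ $x$ is a Lebesgue point of $f''\in L^{1}$ to obtain the a.e.\ limit. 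Continuity of $f''$ promotes this to every interior point, while $\alpha$-Hölder continuity gives $|R(u)|\le C|u|^{2+\alpha}$ and hence the uniform interior bound $|\epsilon^{-2}\mathcal L_\epsilon f(x)-\tfrac16 f''(x)|\le C\epsilon^{\alpha}$.

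The endpoints are handled directly, and this is where the Neumann condition does its work. At $x=1$ the set is the one-sided interval $(1-\epsilon,1]$ of measure $\epsilon$; expanding about $1$ and using $f'(1)=0$ annihilates the first-order term, leaving $f(y)-f(1)=\tfrac12 f''(1)(y-1)^{2}+O(|y-1|^{2+\alpha})$, so $\epsilon^{-2}\mathcal L_\epsilon f(1)\to\tfrac16 f''(1)$ with rate $\epsilon^{\alpha}$ (and symmetrically at $-1$). Combined with the interior analysis this already gives the pointwise statements, since every fixed interior point is eventually interior as $\epsilon\to0$.

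The uniform estimate across the boundary layer $x\in(1-\epsilon,1)$ is the step I expect to be the main obstacle. Writing $s=1-x\in(0,\epsilon)$, the set $(x-\epsilon,1]$ is asymmetric about $x$ with measure $s+\epsilon$, so the first moment $\int_{-\epsilon}^{s}u\,du=\tfrac12(s^{2}-\epsilon^{2})$ no longer vanishes and the first-order Taylor term survives. The quantitative input from the boundary condition is $f'(x)=-\int_x^{1}f''(t)\,dt=O(s)=O(\epsilon)$, which keeps this term bounded after the $\epsilon^{-2}$ scaling; the delicate issue is its exact interplay with the asymmetric second moment $\int_{-\epsilon}^{s}u^{2}\,du=\tfrac13(s^{3}+\epsilon^{3})$. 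Setting $\sigma=s/\epsilon\in[0,1]$ and feeding in $f'(x)=-f''(x)s+O(s^{1+\alpha})$ together with $f''(x)=f''(1)+O(\epsilon^{\alpha})$, a back-of-the-envelope computation on the model quadratic $(y-1)^{2}$ already yields the $\sigma$-dependent value $\tfrac16(1+2\sigma-2\sigma^{2})f''(1)$, which coincides with $\tfrac16 f''$ only at $\sigma\in\{0,1\}$. Reconciling this with a uniform $O(\epsilon^{\alpha})$ bound up to the boundary is therefore the crux: one must either exhibit a finer cancellation than the naive moment count suggests, or adapt the averaging set $N_x$ near the endpoints (as Definition~\ref{def:avlap} explicitly permits) so that its first moment about $x$ is compatible with the Neumann condition. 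I would concentrate the most careful bookkeeping precisely here.
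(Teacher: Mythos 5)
Your interior and endpoint analysis reproduces the paper's argument: Taylor expansion with integral remainder, cancellation of the linear term by symmetry of $B(x,\epsilon)$, Lebesgue points of $f''\in L^1$ for the a.e.\ statement, and the Neumann condition $f'(\pm1)=0$ to kill the surviving first moment at the two endpoints, where the one-sided average again produces $\tfrac16 f''(\pm1)$. Up to that point there is nothing to criticize, and your proposal and the paper coincide.

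The boundary layer $0<\operatorname{dist}(x,\{\pm1\})<\epsilon$, which you correctly single out as the crux, is where the real issue lies --- and your model computation settles it in the negative rather than leaving it open. For $f(y)=(y-1)^2$ near $1$ (modified away from $1$ to satisfy $f'(-1)=0$) and $x=1-\sigma\epsilon$, your value $\tfrac16(1+2\sigma-2\sigma^2)f''(1)$ is exactly right: at $\sigma=\tfrac12$ one gets $\tfrac12$ versus the target $\tfrac13$, a discrepancy of $\tfrac16$ that does not shrink as $\epsilon\to0$. There is no finer cancellation to be found; with the truncated balls $\tilde B(x,\epsilon)=(x-\epsilon,x+\epsilon)\cap[-1,1]$ the uniform bound $|\epsilon^{-2}\mathcal L_\epsilon f(x)-\tfrac16 f''(x)|\le C\epsilon^\alpha$ over all of $[-1,1]$ is simply false. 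You should be aware that the paper's own proof has precisely the same omission: it treats only $x$ with $\epsilon<\operatorname{dist}(x,\{-1,1\})$ and the two exact endpoints, and never addresses intermediate $x$, so the ``uniform'' clause is not actually proved there either. The statement is salvageable in either of the two ways you suggest: read the uniformity as holding on compact subsets of $(-1,1)$ (for $\epsilon$ below the distance to the boundary the interior estimate is already uniform, and larger $\epsilon$ can be absorbed into $C$), or modify the averaging sets $N_x$ near the endpoints --- as Definition~\ref{def:avlap} permits --- so that their first moment about $x$ vanishes or is controlled by the Neumann condition. So the gap you flagged is genuine, but it is a gap in the theorem as stated rather than in your argument; your write-up would be strengthened by presenting the $\sigma$-computation as an explicit counterexample to the literal uniform claim instead of as an unresolved difficulty.
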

\begin{proof}
For $x\in(-1,1)$ it is apparent that once $\epsilon$ is less than the distance from $x$ to $\{-1,1\}$ we have $\tilde{B}(x,\epsilon)=B(x,\epsilon)$. Taylor's theorem gives 
\begin{equation*}
	f(y)-f(x)=f'(x)(y-x)+\int_x^y f''(t)(y-t)\, dt.
\end{equation*}
When we integrate over $B(x,\epsilon)$ the linear term cancels by symmetry and we may translate the coordinates to $x$ so as to obtain
\begin{align}
\lefteqn{\int_{B(x,\epsilon)} f(y)-f(x)dy}\quad& \notag\\
&=  \int_{x-\epsilon}^{x+\epsilon} \int_x^y f''(t)(y-t)\,dt\,dy \notag\\
&=  \int_{-\epsilon}^\epsilon \int_0^y f''(x+t)(y-t)\,dt\,dy \notag\\
&= \int_{-\epsilon}^0 f''(x+t)\int_{-\epsilon}^t (y-t)\,dy\, dt+ \int_0^\epsilon f''(x+t) \int_t^\epsilon  (y-t)\,dy\, dt \notag\\
&= \frac12 \int_{-\epsilon}^0f''(x+t) (\epsilon+t)^2\,dt + \frac12\int_0^\epsilon f''(x+t)(\epsilon-t)^2\,dt \label{eqn:avlapllemmaeqn1}
\end{align}
Evaluating the same integrals with $f''(x+t)$ replaced by $f''(x)$ yields $\frac{\epsilon^3}{3}f''(x)$, while dividing by $2\epsilon$ gives $\mathcal{L}_\epsilon$, so for $\epsilon$ less than the distance from $x$ to $\{-1,1\}$
\begin{align} 
\lefteqn{\Bigl| \epsilon^{-2}\mathcal{L}_\epsilon f (x) -\frac16 f''(x)\Bigr|}&\quad \notag\\
&=\biggl| \frac1{4\epsilon^3}\int_0^\epsilon \bigl( f''(x-\epsilon+t)-f''(x)) t^2\,dt + \frac1{4\epsilon^3}\int_{-\epsilon}^0 \bigl( f''(x+\epsilon+t)-f''(x)\bigr) t^2\,dt \biggr| \notag\\
&\leq \frac1{4\epsilon} \int_{x-\epsilon}^{x+\epsilon}|f''(y)-f''(x)|\,dy. \label{eqn:avlapllemmaeqn2}
\end{align}
If $f''$ is continuous this is easily seen to have limit zero as $\epsilon\to0$, and the H\"older estimate is also elementary.

The two boundary points $x\in\{-1,1\}$ are symmetrical, so we consider only $x=-1$. In this case $\tilde{B}(-1,\epsilon)=[-1,-1+\epsilon)$ for any $0<\epsilon<2$.  We again use Taylor's theorem but this time find
\begin{align*}
\lefteqn{\int_{\tilde{B}(-1,\epsilon)} f(y)-f(x)dy}\quad&\\
&= f'(-1)\int_{-1}^{-1+\epsilon} (y+1)\, dy + \frac12 \int_{-1}^{-1+\epsilon} \int_{-1}^y f''(t)(y-t)\,dt\,dy \\
&= \frac12\epsilon^2 f'(-1) + \frac14 \int_0^{\epsilon} f''(-1+t)(\epsilon-t)^2\,dt.
\end{align*}
Dividing by $\epsilon^2|\tilde{B}(-1,\epsilon)|=\epsilon^3$ to get $\epsilon^{-2}\mathcal{L}_\epsilon$ will give a finite limit only when $f'(-1)=0$, in which case the same argument as before gives that the limit is $\frac16f''(-1)$ if $f''$ is continuous on $[-1,1]$ and the estimate in the H\"older case is valid.

Finally, if we have only $f'\in\AC[-1,1]$ then $f'$ is continuous so the conditions $f'(-1)=f'(1)=0$ make sense and our reasoning via Taylor's theorem is justified because $f''\in L^1[-1,1]$.  In this case the a.e. limit is proved by the Lebesgue differentiation theorem.
\end{proof}

It is worth noting that a similar argument applies to $\mathcal{L}_{K,\epsilon}$ defined using averaging with respect to a compactly supported kernel.  To be specific, suppose we have an even function $k(s)\geq0$ supported in $[-1,1]$ so $\int k(s)ds=1$. For $x\in[-1,1]$ define $K_\epsilon(x,y)$ to be $k(\epsilon^{-1}d(x,y))$ but truncated to $y\in[-1,1]$ and normalized to have integral $1$. Then our definitions are
\begin{align*}
K_\epsilon(x,y) &= \frac{k(\epsilon^{-1}d(x,y))\mathbf{1}_{[-1,1]}(y)} {\int_{-1}^1 k(\epsilon^{-1}d(x,y))\,dy}, \text{ and}\\
\mathcal{L}_{K,\epsilon} &= \int (f(y)-f(x)) K_\epsilon(x,y) dy.
\end{align*}
As in the preceding we see that if $x\in(-1,1)$ and $\epsilon$ is less than the distance from $x$ to $\{-1,1\}$ then this simplifies to $K_\epsilon(x,y)=\epsilon^{-1}k(\epsilon^{-1}d(x,y))$.  We can repeat the argument that leads to~\ref{eqn:avlapllemmaeqn1}; cancellation of the linear term now uses that $k$ is an even function, and the resulting expression is
\begin{equation*}
 \int_{-\epsilon}^0 f''(x+t)\int_{-\epsilon-t}^0 k(\epsilon^{-1}|y|) y\,\frac{dy}\epsilon dt + \int_0^\epsilon f''(x+t) \int_0^{\epsilon-t} k(\epsilon^{-1}|y|) y\,\frac{dy}\epsilon \, dt.
\end{equation*}
If $M=\frac12\int_{-1}^1 k(y)y^2\,dy$ then subtracting $M\epsilon^2 f''(x)$ and reorganizing the integration we obtain an estimate analogous to~\ref{eqn:avlapllemmaeqn2}
\begin{equation*}
\Bigl| \epsilon^{-2}\mathcal{L}_{K,\epsilon}f(x) - M f''(x) \Bigr| \leq \int K_\epsilon(x,y) |f''(y)-f''(x)|dy
\end{equation*}
and conclude that this converges to zero as $\epsilon\to0$.  The argument at the points $\{-1,1\}$ is also largely unchanged, so in the case that $f'$ vanishes at $\pm1$ we find $\epsilon^{-2}\mathcal{L}_{K,\epsilon}f(x)$ converges to $Mf''(x)$ with $M$ being the second moment of $k$.

\subsection{Graph Laplacians for equally spaced points}

Let $S_n=\{x_1,\dotsc, x_n\}$ be equally spaced points in $[-1,1]$, so that $x_j=-1+\frac{2(j-1)}{n-1}$.  For $x\in[-1,1]$ form the graph Laplacian $\mathcal{L}_{n,\epsilon}$ as in Definition~\ref{def:graphlaplacians}.  Recall that in doing so at a point $x$ we take $x_0=x$ and form the graph on $S_n\cup\{x_0\}$ with edge weights between $x_i\neq x_j$ being $w_{i,j}=1$ if $|x_i-x_j|\leq \epsilon$ and $w_{i,j}=0$ otherwise.  Here we study the convergence of $\mathcal{L}_{n,\epsilon}f(x)$ as $n\to\infty$.

\begin{thm}\label{thm:graphtoaveragingoninterval}
For $f$ on $[-1,1]$ the difference between the averaged Laplacian $\mathcal{L}_\epsilon$ and the graph Laplacian $\mathcal{L}_{n,\epsilon}$ is dominated by the oscillation of the function at scale $\frac2n$: precisely, provided $n>\frac1\epsilon$, 
\begin{equation*}
|\mathcal{L}_\epsilon f(x) - \mathcal{L}_{n,\epsilon}f(x) | \leq  \sup_{|y-z|<\frac2n}|f(y)-f(z)|
\end{equation*}
If $f$ is H\"older with exponent $\alpha$ then $|\mathcal{L}_\epsilon f(x) - \mathcal{L}_{n,\epsilon}f(x) | \leq C n^{-\alpha}$.
\end{thm}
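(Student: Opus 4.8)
The plan is to follow the template of Theorem~\ref{thm:discretelapbound} and Corollary~\ref{cor:graphlapapproxavlapifmeasurethpartitions}: reduce to comparing a continuous average of $f$ over $\tilde B(x,\epsilon)$ with the discrete average of $f$ over the sample points lying in that ball, and then control the difference by the oscillation of $f$ over the cells of a suitable equipartition. First I would observe that both $\mathcal{L}_\epsilon f(x)$ and $\mathcal{L}_{n,\epsilon}f(x)$ subtract $f(x)$, and that $\frac1{|\tilde B(x,\epsilon)|}\int_{\tilde B(x,\epsilon)} f(x)\,dy = f(x) = \frac1{\#E_\epsilon(x)}\sum_{E_\epsilon(x)} f(x)$, so these terms cancel and the quantity to estimate is $\bigl|\,\frac1{|\tilde B(x,\epsilon)|}\int_{\tilde B(x,\epsilon)} f(y)\,dy - \frac1{m}\sum_{k=1}^m f(x_{j_k})\,\bigr|$, where $x_{j_1}<\dots<x_{j_m}$ are the $m=\#E_\epsilon(x)$ equally spaced sample points in $\tilde B(x,\epsilon)$ (here $m\ge1$ by the hypothesis $n>\frac1\epsilon$, which guarantees $E_\epsilon(x)\neq\emptyset$).

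Next I would build the equipartition. Since $f$ is integrated against Lebesgue measure on an interval, I can split $\tilde B(x,\epsilon)$ into $m$ consecutive subintervals $I_1,\dots,I_m$ of equal length $|\tilde B(x,\epsilon)|/m$ and, pairing $I_k$ with $x_{j_k}$ in increasing order, rewrite the difference as $\frac1m\sum_k\bigl(\frac1{|I_k|}\int_{I_k}(f(y)-f(x_{j_k}))\,dy\bigr)$, exactly as in the proof of Theorem~\ref{thm:discretelapbound}. The triangle inequality then bounds the absolute value by $\sup_k\sup_{y\in I_k}|f(y)-f(x_{j_k})|$, so the whole problem reduces to showing that each $x_{j_k}$ lies in (or very near) its cell $I_k$ and that $\sup_{y\in I_k}|y-x_{j_k}|\le \tfrac2n$; once this is in hand the H\"older statement is immediate, since $|f(y)-f(x_{j_k})|\le C|y-x_{j_k}|^\alpha\le C(2/n)^\alpha$.

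The hard part will be the geometric bookkeeping that controls the cell radius $\sup_{y\in I_k}|y-x_{j_k}|$. The sample spacing is $h=\frac2{n-1}$ while each equipartition cell has length $|\tilde B(x,\epsilon)|/m$, and because these two lengths are only approximately equal the paired point $x_{j_k}$ can drift relative to the centre of $I_k$ as $k$ runs from $1$ to $m$; the accumulated drift is largest at the two ends of $\tilde B(x,\epsilon)$, which is exactly where the estimate is tight. I would estimate this drift by comparing $x_{j_k}=x_{j_1}+(k-1)h$ with the cell endpoints $x-\epsilon+(k-1)|I_k|$, using the two-sided bound $\frac{2\epsilon}{m+1}\le h\le\frac{2\epsilon}{m-1}$ that follows from the fact that the sample points just inside and just outside $\tilde B(x,\epsilon)$ straddle its endpoints. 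This keeps the displacement at the scale of a single grid spacing $\frac2{n-1}$, matching the claimed oscillation scale $\frac2n$ up to the harmless factor $\frac{n}{n-1}$; pinning down this constant is the delicate point, and it is where the symmetry of the configuration (not merely the crude per-cell bound) must be exploited.

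Finally I would treat the endpoints $x\in\{-1,1\}$ separately, since there $\tilde B(x,\epsilon)$ is the truncated interval of length $\epsilon$ rather than $2\epsilon$. The same equipartition argument applies verbatim with $|\tilde B(x,\epsilon)|=\epsilon$, and by the left--right symmetry of the construction it suffices to record the computation at one endpoint. Combining the interior and boundary estimates yields the stated oscillation bound, and the H\"older corollary follows at once.
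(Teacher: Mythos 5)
Your proposal follows essentially the same route as the paper: both reduce the estimate to Theorem~\ref{thm:discretelapbound} by subdividing the $\epsilon$-ball into equal-length cells, one per sample point, and bounding the error by the oscillation of $f$ over a single cell of length comparable to $\frac2n$. The differences are presentational rather than substantive --- you pair points with cells by increasing order and track the resulting drift explicitly (the paper simply asserts that each cell contains a unique sample point), and you treat the truncated balls at $x=\pm1$ separately, which the paper leaves implicit --- so the plan is correct and matches the paper's argument.
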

\begin{proof}
We use Theorem~\ref{thm:discretelapbound}.  Fix $x$ and $\epsilon>0$. Let $N(x)=[x-\epsilon,x+\epsilon)$, so $\mathcal{L}_\epsilon f(x) = L_{N(x)}f(x)$ in the notation of that theorem. For $n>\frac1\epsilon$ fix $k\in\mathbb{N}$ such that $k\leq 2n\epsilon<k+1$, so that $N(x)$ contains exactly $k$ of the points from $S_n$.  Now subdivide $N(x)$ into $k$ equal intervals of length $\frac{2\epsilon}k$, with endpoints $x-\epsilon+\frac{2\epsilon j}k$ for $0\leq j\leq k$ and label them $N_j(x)$.  It is easy to verify that there is a unique $x_j\in S_n\cap N_j(x)$, so we label it $x_{j}$ and record from Theorem~\ref{thm:discretelapbound} that
\begin{equation*}
	|L_{N(x)}f(x) - L_kf(x)|\leq \sup_{x_{j}\in S_n\cap N(x)} \sup_{y\in N_j(x)} |f(x_{j})-f(y)|.
	\end{equation*}
Using the fact that each $N_j(x)$ is length $\frac{2\epsilon}k<\frac2n$ we can replace the right side by an oscillation bound over intervals of length $\frac 2n$.  The result is independent of $x$ and $k$:
\begin{equation*}\label{eqn:graphtoaveragingoninterval1}
	|L_{N(x)}f(x) - L_kf(x)|\leq \sup_{|y-z|<\frac2n}|f(y)-f(z)|.
	\end{equation*}

Now from the proof of  Theorem~\ref{thm:discretelapbound}, 
\begin{equation*}
 L_kf(x) = \frac1k \sum_{j=1}^k (f(x_{j})-f(x) = \mathcal{L}_{n,\epsilon}f(x)
\end{equation*}
because the points $x_{j}$ are precisely those in $S_n\cap[x-\epsilon,x+\epsilon)$, so we have the graph with edge weight $\frac1k$ between $x$ and the $k$ points of $S_n$ that are within distance $\epsilon$ and edge weight zero otherwise.  Notice that $k$ is the degree of $x$ in the graph.

Since also $\mathcal{L}_\epsilon f(x) = L_{N(x)}f(x)$ we may rewrite our estimate~\eqref{thm:graphtoaveragingoninterval} as 
\begin{equation*}
	|\mathcal{L}_\epsilon f(x) - \mathcal{L}_{n,\epsilon}f(x) | \leq \sup_{|y-z|<\frac2n}|f(y)-f(z)|.
	\end{equation*}
from which the H\"older bound follows.
\end{proof}

\subsection{Rate of convergence of graph Laplacians using uniformly spaced sample points to the Laplacian operator}

Combining the results from the previous two sections we obtain an estimate of the rate of convergence of graph Laplacians for uniformly spaced points to the second derivative. This estimate is not optimal, but it serves to illustrate the situation with random approximations.

\begin{pr}\label{thm:rateLntoLoninterval}
If $f:[-1,1]\to\mathbb{R}$ satisfies $f'(-1)=f'(1)=0$ and $f''$ is H\"older continuous with exponent $\alpha$ then  
there is $\epsilon(n)$ and a constant $C$ so
\begin{equation}\label{eqn:rateLntoLoninterval1main}
\bigl| \frac16 f''(x)-\epsilon(n)^{-2}\mathcal{L}_{\epsilon,n} f(x) \bigr|
\leq C n^{-\frac \alpha{\alpha+2}}.
\end{equation}
\end{pr}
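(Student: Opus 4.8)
The plan is to bound the two pieces of the triangle inequality~\eqref{eqn:triangleest} (with $\beta=2$) separately and then choose $\epsilon=\epsilon(n)$ to balance them. Writing
\[
\bigl| \tfrac16 f''(x)-\epsilon^{-2}\mathcal{L}_{\epsilon,n} f(x) \bigr|
\leq \bigl| \tfrac16 f''(x)-\epsilon^{-2}\mathcal{L}_\epsilon f(x) \bigr|
+ \epsilon^{-2}\bigl| \mathcal{L}_\epsilon f(x) - \mathcal{L}_{\epsilon,n} f(x) \bigr|,
\]
the first term is handled directly by Theorem~\ref{thm:Lepstof''}: since $f'$ is absolutely continuous with $f'(\pm1)=0$ and $f''$ is H\"older with exponent $\alpha$, that theorem supplies a constant $C_1$ with $|\tfrac16 f''(x) - \epsilon^{-2}\mathcal{L}_\epsilon f(x)| \leq C_1 \epsilon^{\alpha}$, uniformly in $x\in[-1,1]$.

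For the second term I would apply Theorem~\ref{thm:graphtoaveragingoninterval}, which (for $n>1/\epsilon$) bounds $|\mathcal{L}_\epsilon f(x)-\mathcal{L}_{\epsilon,n}f(x)|$ by the oscillation $\sup_{|y-z|<2/n}|f(y)-f(z)|$. The key point is that one should exploit the \emph{Lipschitz} regularity of $f$ here rather than merely treating $f$ as H\"older of exponent $\alpha$: because $f''$ is continuous on the compact interval, $f'$ is bounded and hence $f$ is Lipschitz, so the oscillation is $O(1/n)$ and the second term is at most $C_2 \epsilon^{-2} n^{-1}$. Using only the crude $n^{-\alpha}$ oscillation bound would degrade the final exponent, so this observation is what makes the argument sharp.

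Combining the two bounds gives $|\tfrac16 f''(x)-\epsilon^{-2}\mathcal{L}_{\epsilon,n}f(x)| \leq C_1 \epsilon^\alpha + C_2 \epsilon^{-2} n^{-1}$, uniformly in $x$. The two terms are equalized when $\epsilon^{\alpha+2}\asymp n^{-1}$, i.e. for $\epsilon(n) \asymp n^{-1/(\alpha+2)}$, at which point each contribution is of order $n^{-\alpha/(\alpha+2)}$, which is exactly~\eqref{eqn:rateLntoLoninterval1main}. I would close by verifying admissibility: with this choice $1/\epsilon(n) \asymp n^{1/(\alpha+2)}$, and since $1/(\alpha+2)<1$ the hypothesis $n>1/\epsilon(n)$ of Theorem~\ref{thm:graphtoaveragingoninterval} holds for all sufficiently large $n$ (the finitely many small $n$ being absorbed into the constant $C$).

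The computation is essentially routine, so there is no deep obstacle. The only genuine subtleties are (i) remembering to use Lipschitz rather than $\alpha$-H\"older control in the graph-to-average step, without which one obtains the inferior exponent $\alpha^2/(\alpha+2)$, and (ii) confirming that both theorems are applied to the \emph{same} averaging operator $\mathcal{L}_\epsilon$ and that the constraint $n>1/\epsilon$ is compatible with the balancing choice $\epsilon(n)\asymp n^{-1/(\alpha+2)}$.
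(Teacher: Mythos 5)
Your proposal is correct and follows essentially the same route as the paper: the triangle inequality splitting, the $C_2\epsilon^\alpha$ bound from Theorem~\ref{thm:Lepstof''}, the Lipschitz (not merely H\"older) oscillation bound $O(1/n)$ in Theorem~\ref{thm:graphtoaveragingoninterval} since $f'$ is continuous on a compact interval, and the balancing choice $\epsilon(n)\asymp n^{-1/(\alpha+2)}$. Your added remarks on why Lipschitz control matters and on checking $n>1/\epsilon(n)$ are sound but do not change the argument.
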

\begin{proof}
Since $f$ has continuous derivative there is $C_1$ so $|f(y)-f(z)|\leq C_1|y-z|\leq \frac{2C_1}{n}$ if $|y-z|<\frac2n$.  We use this in the bound for Theorem~\ref{thm:graphtoaveragingoninterval}.   By hypothesis there is also $C_2$ so $|f''(y)-f''(z)|\leq C_2|y-z|^\alpha$, which we use in the estimate of Theorem~\ref{thm:Lepstof''}. Combining these estimates gives
\begin{align}
\bigl| \frac16 f''(x)-\epsilon^{-2}\mathcal{L}_{\epsilon,n} f(x) \bigr|
&\leq \bigl| \mathcal{L}f(x)-\epsilon^{-2}\mathcal{L}_{\epsilon} f(x) \bigr| + \epsilon^{-2} \bigl| \mathcal{L}_\epsilon f(x)-\mathcal{L}_{\epsilon,n} f(x) \bigr| \notag\\
&\leq C_2 \epsilon^\alpha + \frac{2C_1}{n\epsilon^2}. \label{eqn:thm:rateLntoLoninterval1}
\end{align}
Minimizing over $\epsilon$ for fixed $n$ we set $\epsilon(n)=\Bigl(\frac{4C_1}{C_2\alpha n}\Bigr)^{\frac1{\alpha+2}}$ and obtain~\eqref{eqn:rateLntoLoninterval1main} for a constant $C$ depending on $C_1$, $C_2$ and $\alpha$.
\end{proof}

In particular, if $f'''$ is continuous we find that approximating the second derivative by equally spaced graph Laplacian points gives an error at most $Cn^{-1/3}$.  If this were the true error for approximating eigenfunctions then we might expect a Laplacian eigenmap to do a poor job of reflecting the geometry of a data set that lies on a manifold, unless there are a great many data points.  However, in the case of uniformly spaced points on an interval we can explicitly compute the eigenfunctions of the graph Laplacian and determine how close they are to the actual eigenfunctions.  Doing so shows that the error is much smaller than the preceding analysis suggests.  We give the full computation only for an illustrative special case: $\epsilon=\frac2{n-1}$. Note that the preceding error analysis, as in~\eqref{eqn:thm:rateLntoLoninterval1}, produces completely uninteresting bounds in this situation.  However, it is evident from the following that something different occurs.

\begin{pr}\label{prop:efnsofgraphlaplacianonI}
For the graph Laplacian $\mathcal{L}_{\frac2{n-1},n}$ at points of $S_n=\{-1+\frac{2(j-1)}{n-1}:1\leq j\leq n\}$, the eigenvalues have the form $\cos\bigl(\frac {k\pi}{n-1}\bigr) -1$ with eigenfunction $\cos\bigl(\frac12k\pi x\bigr)$ for $k$ even and $\sin\bigl(\frac12 k\pi x\bigr)$ for $k$ odd, where $0\leq k\leq n-1$ is an integer.  These are exactly the eigenfunctions of the Neumann Laplacian on $[-1,1]$. 
\end{pr}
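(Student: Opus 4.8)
The plan is to identify $\mathcal{L}_{\frac{2}{n-1},n}$ with an explicit tridiagonal matrix on $S_n$, verify directly that the claimed trigonometric functions are eigenvectors with the claimed eigenvalues, and then confirm by a dimension count that we have produced the entire spectrum. First I would pin down the operator. With $\epsilon=h:=\frac{2}{n-1}$ equal to the spacing, the only sample points within distance $\epsilon$ of an interior $x_i$ are $x_{i-1}$ and $x_{i+1}$, so the relevant graph on $S_n$ is the path graph. Writing $f_j=f(x_j)$, the interior rows give the normalized second difference $\mathcal{L}_{\epsilon,n}f(x_i)=\tfrac12(f_{i-1}+f_{i+1})-f_i$ for $2\le i\le n-1$, while $x_1$ and $x_n$ have a single neighbour each, so the boundary rows give the one-sided differences $\mathcal{L}_{\epsilon,n}f(x_1)=f_2-f_1$ and $\mathcal{L}_{\epsilon,n}f(x_n)=f_{n-1}-f_n$.

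Next, set $\theta=\frac{k\pi}{n-1}$ and observe that for the grid values $\alpha_j:=\tfrac12 k\pi x_j=-\tfrac{k\pi}{2}+(j-1)\theta$ consecutive arguments differ by exactly $\theta$. The interior eigenvalue equation is then immediate from the angle-addition identities $\cos\alpha_{i-1}+\cos\alpha_{i+1}=2\cos\theta\cos\alpha_i$ and $\sin\alpha_{i-1}+\sin\alpha_{i+1}=2\cos\theta\sin\alpha_i$: in either case the interior operator multiplies the sequence by $\cos\theta-1$, which is the asserted eigenvalue.

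The crux is the boundary. I would check that the condition at $x_1$, namely $f_2-f_1=(\cos\theta-1)f_1$, i.e.\ $f_2=\cos\theta\,f_1$, holds exactly for the stated parity. Taking $f_1=\cos\alpha_1$, $f_2=\cos\alpha_2$ and expanding $\cos(\alpha_1+\theta)$, the difference $f_2-\cos\theta\,f_1$ reduces to $\sin(\tfrac{k\pi}{2})\sin\theta$, which vanishes precisely when $k$ is even; the analogous sine computation gives $\cos(\tfrac{k\pi}{2})\sin\theta$, which vanishes precisely when $k$ is odd. Conceptually this says the one-sided boundary operator equals the interior second difference once one imposes the discrete Neumann (even-reflection) condition $\phi(-1-h)=\phi(-1+h)$, and the candidate is even about $x=\pm1$ exactly for the matching parity. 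The endpoint $x_n$ is handled identically, or deduced from the reflection symmetry $\alpha_{n+1-j}=-\alpha_j$ (using $(n-1)\theta=k\pi$). I expect this parity bookkeeping at the boundary to be the only real obstacle, everything else being a routine trigonometric identity.

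Finally, letting $k$ run over $0,1,\dots,n-1$ produces $n$ eigenvalues $\cos(\frac{k\pi}{n-1})-1$ that are pairwise distinct, since $t\mapsto\cos t$ is strictly decreasing on $[0,\pi]$ and the arguments $\frac{k\pi}{n-1}$ are distinct points of $[0,\pi]$. Distinct eigenvalues force linear independence, so these $n$ eigenvectors form a basis and exhaust the spectrum of the $n$-dimensional operator; the case $k=0$ recovers the constant eigenfunction with eigenvalue $0$, consistent with $\mathcal{L}_{\epsilon,n}\mathbf{1}=0$. Comparing with Subsection~\ref{ssec:IntervalLaplacians}, the eigenfunctions are exactly the Neumann eigenfunctions $\cos\tfrac12(k\pi x)$ for $k$ even and $\sin\tfrac12(k\pi x)$ for $k$ odd, sampled on $S_n$, which is the last assertion.
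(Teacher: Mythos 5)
Your proof is correct, and the core computation is the same as the paper's: both identify $\mathcal{L}_{\frac2{n-1},n}$ restricted to $S_n$ with the tridiagonal matrix whose interior rows are $\tfrac12(f_{i-1}+f_{i+1})-f_i$ and whose boundary rows are the one-sided differences, and both reduce the interior equation to the three-term trigonometric recurrence with multiplier $\cos\theta-1$. Where you differ is in logical organization. The paper works with the complex ansatz $Ae^{i\lambda x}+Be^{-i\lambda x}$ for general $\lambda$, turns the two boundary equations into a $2\times2$ linear system in $A,B$, and extracts the eigenvalues from the singularity condition $\sin(2\lambda)\sin\bigl(\frac{2\lambda}{n-1}\bigr)=0$, with $A=\pm B$ giving the cosine/sine dichotomy; completeness is then dispatched by a remark that one may restrict to $0\le k\le n-1$ by periodicity. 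You instead verify the stated real eigenvectors directly (the boundary check reducing to $\sin(\tfrac{k\pi}{2})\sin\theta$ or $\cos(\tfrac{k\pi}{2})\sin\theta$) and close with a dimension count: the $n$ values $\cos\bigl(\frac{k\pi}{n-1}\bigr)-1$ are pairwise distinct because cosine is strictly monotone on $[0,\pi]$, so the $n$ eigenvectors are independent and exhaust the spectrum of an $n\times n$ operator. Your completeness argument is actually tighter than the paper's, since the determinant computation only rules out further eigenvectors of exponential type, whereas your count rules out anything else; the trade-off is that the paper's derivation explains where the values $\lambda=\tfrac12 k\pi$ come from rather than verifying them after the fact. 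Two small points worth adding: note that the candidate vectors are nonzero (e.g.\ $|f_1|=1$ in both parities), which is needed before "distinct eigenvalues imply independence", and soften "vanishes precisely when $k$ is even" to "vanishes when $k$ is even", since $\sin\theta=0$ also kills the expression at $k=n-1$ where the cosine candidate degenerates to the zero vector.
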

\begin{proof}
At $x_j$, $1<j<n$ we have $\mathcal{L}_{\frac2{n-1},n}f(x_j)=\frac12\bigl(f(x_{j+1})+f(x_{j-1})-2f(x_j)\bigr)$.  With $f(x)=e^{\pm i\lambda x}$ we find 
\begin{equation*}
	\mathcal{L}_{\frac2{n-1},n}f(x_j)=\frac12 f(x_j) \Bigl( e^{\frac{i2\lambda}{n-1}} +  e^{\frac{-i2\lambda}{n-1}}-2\Bigr)
	=  f(x_j) \Bigl(\cos\bigl(\frac{2\lambda}{n-1}\bigr)-1\Bigr).
	\end{equation*}
Now taking $f=Ae^{i\lambda x} +Be^{-i\lambda x}$ we compute at $x_1=-1$ and $x_n=1$ that
\begin{align*}
\mathcal{L}_{\frac2{n-1},n}f(x_1)
 &= f(x_2)-f(x_1)
 = Ae^{-i\lambda}\bigl(e^{\frac{i2\lambda}{n-1}}-1\bigr)+ Be^{i\lambda}\bigl(e^{\frac{-i2\lambda}{n-1}}-1\bigr) \\
\mathcal{L}_{\frac2{n-1},n}f(x_n)
&= f(x_{n-1})-f(x_n)
= Ae^{i\lambda} \bigl(e^{\frac{-i2\lambda}{n-1}}-1\bigr) + Be^{i\lambda}\bigl(e^{\frac{i2\lambda}{n-1}}-1\bigr)
\end{align*}
so the eigenvalue equations are
\begin{align*}
Ae^{-i\lambda}\Bigl( e^{\frac{i2\lambda}{n-1}}- \cos\bigl(\frac{2\lambda}{n-1}\bigr)\Bigr) 
+ Be^{i\lambda} \Bigl( e^{\frac{-i2\lambda}{n-1}}- \cos\bigl(\frac{2\lambda}{n-1}\bigr)\Bigr) 
&=0\\
Ae^{i\lambda}\Bigl( e^{\frac{-i2\lambda}{n-1}}- \cos\bigl(\frac{2\lambda}{n-1}\bigr)\Bigr) 
+ Be^{-i\lambda} \Bigl( e^{\frac{i2\lambda}{n-1}}- \cos\bigl(\frac{2\lambda}{n-1}\bigr)\Bigr)
&=0
\end{align*}
which is a linear system for $A$ and $B$ that can be rewritten as
\begin{equation*}
\sin\bigl(\frac{2\lambda}{n-1}\bigr)
 \begin{bmatrix} e^{-i\lambda} & -e^{i\lambda}\\ e^{i\lambda} & -e^{-i\lambda} \end{bmatrix}
\begin{bmatrix} A\\ B\end{bmatrix}
= \begin{bmatrix} 0\\0\end{bmatrix} 
\end{equation*}
and has nontrivial solutions exactly when $\sin(2\lambda)\sin\bigl(\frac{2\lambda}{n-1}\bigr)=0$. This gives $\lambda=\frac12 k\pi$ for $k\in\mathbb{Z}$ and it is easy to check that then $A=B$ if $k$ is even and $A=-B$ if $k$ is odd. The eigenfunctions are then $\cos \lambda x=\cos \frac12k\pi$ if $k$ is even and $\sin \lambda x=\sin\frac12 k\pi$ if $k$ is odd.  We need only consider $0\leq k\leq n-1$ by periodicity (in $k$) of the expression for the eigenvalue and of the eigenfunctions restricted to $S_n$.

The fact that these are also exactly the  eigenfunctions of the Neumann Laplacian was discussed at the beginning of Section~\ref{ssec:IntervalLaplacians}.
\end{proof}

There is also a version of the bound in~\eqref{eqn:thm:rateLntoLoninterval1} for the case of an eigenfunction with small eigenvalue.  Note that since $\epsilon=\frac2{n-1}$ we have $\epsilon^{-2}=\frac14(n-1)^2$, and we can compute only at points of $S_n$.  However $f$ varies only a small amount (bounded by a constant multiple of $\frac1n$) across the intervals between points of $S_n$.
\begin{cor}\label{cor:efnsofgraphlaplacianonI}
For $f$ an eigenfunction of the Neumann Laplacian $\mathcal{L}=\frac12\frac{d^2}{dx^2}$ on $[-1,1]$, or equivalently of $\mathcal{L}_{\frac 2{n-1},n}$ on $S_n$, associated to one of the first $k_0$ non-zero eigenvalues then there is $C$ so
\begin{equation*}
	\bigl| \mathcal{L}f(x_j) - \frac14(n-1)^2 \mathcal{L}_{\frac2{n-1},n} f \bigr| \leq Ck_0^4 (n-1)^{-2}
	\end{equation*} 
\end{cor}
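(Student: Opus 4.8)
The plan is to exploit Proposition~\ref{prop:efnsofgraphlaplacianonI}, which tells us that on $S_n$ the graph Laplacian $\mathcal{L}_{\frac2{n-1},n}$ and the Neumann Laplacian $\mathcal{L}$ have \emph{exactly the same} eigenfunctions, namely $\phi_k(x)=\cos(\tfrac12 k\pi x)$ for $k$ even and $\phi_k(x)=\sin(\tfrac12 k\pi x)$ for $k$ odd. Consequently, when $f=\phi_k$ both operators on the left act as multiplication by a scalar, and the entire problem collapses to comparing the two corresponding eigenvalues. Since the scaling factor is $\tfrac14(n-1)^2=\epsilon^{-2}$ with $\epsilon=\tfrac2{n-1}$, nothing needs to be said about the oscillation of $f$ across the mesh (this is exactly why the crude triangle-inequality analysis of~\eqref{eqn:thm:rateLntoLoninterval1} gives nothing useful here): everything is encoded in one number.

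First I would record the two eigenvalues. Differentiating $\phi_k$ twice gives $\mathcal{L}\phi_k=\tfrac12\phi_k''=-\tfrac18 k^2\pi^2\,\phi_k$, so $\mathcal{L}f(x_j)=-\tfrac18k^2\pi^2 f(x_j)$, while Proposition~\ref{prop:efnsofgraphlaplacianonI} gives the graph eigenvalue $\cos(\tfrac{k\pi}{n-1})-1$. Hence
\begin{equation*}
\mathcal{L}f(x_j)-\tfrac14(n-1)^2\,\mathcal{L}_{\frac2{n-1},n}f(x_j)
=\Bigl(-\tfrac18k^2\pi^2-\tfrac14(n-1)^2\bigl(\cos\bigl(\tfrac{k\pi}{n-1}\bigr)-1\bigr)\Bigr)f(x_j).
\end{equation*}

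Next I would estimate the scalar coefficient by a Taylor expansion of cosine. Writing $\theta=\tfrac{k\pi}{n-1}$ and using the Lagrange remainder $\bigl|\cos\theta-1+\tfrac{\theta^2}{2}\bigr|\le\tfrac{\theta^4}{24}$, valid for all $\theta$ since the fourth derivative of cosine is bounded by $1$, together with the identity $\tfrac14(n-1)^2\cdot\tfrac{\theta^2}{2}=\tfrac18k^2\pi^2$ that makes the quadratic terms cancel exactly, the coefficient is bounded by $\tfrac14(n-1)^2\cdot\tfrac{\theta^4}{24}=\tfrac{k^4\pi^4}{96(n-1)^2}$. Finally, since $|\phi_k|\le1$ and, by monotonicity of both eigenvalue sequences in $k$, the first $k_0$ non-zero eigenvalues correspond to $1\le k\le k_0$, we have $k^4\le k_0^4$ and obtain the stated bound with $C=\pi^4/96$.

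There is no serious obstacle here: the whole estimate reduces to a one-line Taylor bound once Proposition~\ref{prop:efnsofgraphlaplacianonI} supplies the shared eigenfunctions. The only points requiring care are confirming that the quadratic terms in the cosine expansion cancel against the continuous eigenvalue, so that the residual is genuinely $O(k^4/(n-1)^2)$ rather than $O(k^2)$, and checking that ``the first $k_0$ non-zero eigenvalues'' really do correspond to indices $1\le k\le k_0$ for both operators, which holds because $\cos(\tfrac{k\pi}{n-1})-1$ is monotone decreasing on $0\le k\le n-1$.
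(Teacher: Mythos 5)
Your proposal is correct and follows essentially the same route as the paper: reduce to comparing the two eigenvalues (since Proposition~\ref{prop:efnsofgraphlaplacianonI} shows the operators share eigenfunctions) and then Taylor-expand $\cos\bigl(\tfrac{k\pi}{n-1}\bigr)$ so the quadratic terms cancel, leaving an $O(k^4(n-1)^{-2})$ remainder. Your version merely adds the explicit Lagrange-remainder constant and the remark on monotone ordering of the eigenvalues, which the paper leaves implicit.
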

\begin{proof}
Since $f$ is an eigenfunction of both operators it is sufficient to bound the difference between the eigenvalues.  The eigenvalue of the $k^\text{th}$ eigenfunction of $\mathcal{L}$ is $\frac12\bigl(\frac12 k\pi\bigr)^2$. The preceding theorem says the $k^\text{th}$ eigenvalue for $n^2\mathcal{L}_{\frac1n,n}$ is $ \frac14(n-1)^2 \Bigl(\cos\bigl(\frac{k\pi}{n-1}\bigr)-1 \Bigr)$. From the power series for cosine (and using $\frac{k\pi}{n-1}$) is bounded) we have
\begin{align*}
	\Bigl|\frac12 \bigl(\frac12 k\pi\bigr)^2 - \frac14(n-1)^2 \Bigl(\cos\bigl(\frac{k\pi}{n-1}\bigr)-1 \Bigr)\Bigr|
	&\leq C \Bigl( \frac{k^4}{(n-1)^2}\Bigr)\leq  Ck_0^4 (n-1)^{-2}
	\end{align*}
because $k\leq k_0$.
\end{proof}
It is important to note that in the preceding we have that the graph Laplacian is close to $\frac12\frac{d^2}{dx^2}$, not to the limit $\frac16\frac{d^2}{dx^2}$ found in Theorem~\ref{thm:Lepstof''}.  Moreover, this is only for the graph Laplacian on equally spaced sample points and is only interesting for low frequency eigenfunctions.  Nonetheless, it emphasizes the possibility that one might get quite an accurate eigenmap from a graph Laplacian $\mathcal{L}_{\epsilon,n}$ with $\epsilon$ much larger than would be required to apply Proposition~\ref{thm:rateLntoLoninterval}.  We return to this at the end of the next section.

\subsection{Average graph Laplacian for i.i.d.\ sample points}\label{ssec:iidlaponinterval}

We construct a random graph Laplacian as discussed in Definition~\ref{def:randomgraphLap}. To this end, fix a probabilty space $(\Omega,\mathbb{P})$ and take $n$ independent, identically distributed, random variables $x_j:\Omega\to [-1,1]$.  For $\omega\in\Omega$ let $S_n(\omega)=\{x_j(\omega):1\leq j\leq n\}$ and for $x\in[-1,1]$ define the graph Laplacian on $S_n(\omega)\cup\{x\}$ with edges between points separated by distance at most $\epsilon$ by setting $E_\epsilon(x)=\{x_j(\omega):|x-x_j(\omega)|\leq \epsilon\}$ and 
\begin{equation*}
	\mathcal{L}_{\epsilon,n}(\omega)f(x)
	= \frac1{\# E_\epsilon}\sum_{E_\epsilon} f(x_j(\omega))-f(x)
\end{equation*}
provided $\# E_\epsilon(x)\neq0$; in the case $\# E_\epsilon(x)=0$ we set $\mathcal{L}_{\epsilon,n}(\omega)f(x)=0$.

In order to understand $\mathcal{L}_{\epsilon,n}(\omega)$ it is useful to consider a simpler object defined as follows:
\begin{equation*}
	L_{\epsilon,n}(\omega) f(x) = \frac1{\epsilon n} \sum_{j=1}^n \mathbf{1}_{B(x,\epsilon)}(x_j) \bigl( f(x_j)-f(x)\bigr)
	\end{equation*}
which is similar to $\mathcal{L}_{\epsilon,n}$ but has the advantage that at each $x$ it is an average of i.i.d.\ random variables $\frac1\epsilon  \mathbf{1}_{B(x,\epsilon)}(x_j) \bigl( f(x_j)-f(x)\bigr)$ and therefore has convergence described by the central limit theorem as in Theorem~\ref{thm-monte-carlo} (see also Theorems \ref{thm2.4} and \ref{thm0.4}). 

\begin{thm}\label{thm:randomlaponinterval}
Suppose $f$ is such that $f'(1)=f'(-1)=0$ and $f''$ is H\"older continuous of exponent $\alpha$.  

If we choose $\epsilon>0$ and $n\to\infty$ then at each $x\in[-1,1]$
\begin{equation*}
\sqrt{ n\epsilon^3} \bigl( \epsilon^{-2} L_{\epsilon,n}(\omega) f(x) - \frac16 f''(x) + O(\epsilon^\alpha)\bigr)  \xrightarrow[\text{distr.}]{} \frac1{\sqrt{3}} |f'(x)|(1+O(\epsilon))  Z 
\end{equation*}
where $Z$ is a standard normal distribution $N(0,1)$ and the convergence is in distribution.  

If we choose $\epsilon(n)\to0$ so that $n\epsilon(n)^3\to\infty$ then at each $x\in[-1,1]$
\begin{equation*}
\sqrt{ n\epsilon(n)^3} \bigl( \epsilon(n)^{-2} L_{\epsilon,n}(\omega) f(x) - \frac16 f''(x) \bigr)  \xrightarrow[\text{distr.}]{} \frac1{\sqrt{3}} |f'(x)|  Z.
\end{equation*}

Minimizing on $\epsilon(n)$ 
in \eqref{eq-min-eps} we find there is a constant $C$ so
\begin{equation*}
\mathbb{E}\Bigl| \epsilon^{-2} L_{\epsilon,n}(\omega) f(x) - \frac16 f''(x)\Bigr|
\leq C n^{-\frac\alpha{3+2\alpha}}.
\end{equation*}
\end{thm}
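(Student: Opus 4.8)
The plan is to treat $L_{\epsilon,n}(\omega)f(x)$ as an empirical average of i.i.d.\ and uniformly bounded random variables, extract its mean and variance, and then feed these into a central limit theorem together with the deterministic estimate of Theorem~\ref{thm:Lepstof''}. Fix $x$ and write $L_{\epsilon,n}(\omega)f(x)=\frac1n\sum_{j=1}^nY_j$ with $Y_j=\epsilon^{-1}\1_{B(x,\epsilon)}(x_j)\bigl(f(x_j)-f(x)\bigr)$ the i.i.d.\ copies of a single variable $Y$. Since $f'$ is continuous on the compact interval, $|f(x_j)-f(x)|\le\|f'\|_\infty|x_j-x|$, so on $B(x,\epsilon)$ one has $|Y_j|\le\|f'\|_\infty$: the summands are bounded uniformly in $\epsilon$ and $n$. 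For $x$ in the interior and $x_j$ uniform, the definition of the averaging Laplacian gives $\E[Y]=\mathcal{L}_\epsilon f(x)$, and Theorem~\ref{thm:Lepstof''} then yields $\epsilon^{-2}\E[Y]=\frac16 f''(x)+O(\epsilon^\alpha)$.

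Next I would compute the variance. From $\E[Y^2]=\frac1{2\epsilon^2}\int_{x-\epsilon}^{x+\epsilon}\bigl(f(y)-f(x)\bigr)^2\,dy$ and the Taylor expansion $f(y)-f(x)=f'(x)(y-x)+O(|y-x|^2)$, the quadratic term integrates to $\frac13 f'(x)^2\epsilon$ and the remainder is $O(\epsilon^2)$; since $\E[Y]=\mathcal{L}_\epsilon f(x)=O(\epsilon^2)$ its square is negligible, so
\begin{equation*}
\Var(Y)=\frac{f'(x)^2\epsilon}{3}\bigl(1+O(\epsilon)\bigr).
\end{equation*}
For fixed $\epsilon$ the Lindeberg--L\'evy theorem (Theorem~\ref{thm-monte-carlo}) gives $\sqrt n\bigl(L_{\epsilon,n}(\omega)f(x)-\E[Y]\bigr)\to\sqrt{\Var(Y)}\,Z$ in distribution. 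Rescaling by the multiplier $\sqrt{n\epsilon^3}\,\epsilon^{-2}=\sqrt n\,\epsilon^{-1/2}$ and substituting the mean and variance asymptotics produces the first assertion, the factor $1+O(\epsilon)$ originating from the variance and the $O(\epsilon^\alpha)$ from the bias $\epsilon^{-2}\E[Y]-\frac16f''(x)$. When $\epsilon=\epsilon(n)\to0$ the $Y_j$ form a triangular array, so I would replace Lindeberg--L\'evy by Lindeberg--Feller; the uniform bound $|Y_j|\le\|f'\|_\infty$ together with $s_n^2:=n\,\Var(Y)\asymp n\epsilon\to\infty$ (which follows from $n\epsilon^3\to\infty$) makes the Lindeberg condition hold automatically, since $\delta s_n$ eventually exceeds the deterministic bound $2\|f'\|_\infty$ on the centred summands.

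For the $L^1$ estimate I would split
\begin{equation*}
\E\Bigl|\epsilon^{-2}L_{\epsilon,n}(\omega)f(x)-\tfrac16 f''(x)\Bigr|
\le\Bigl|\epsilon^{-2}\E[Y]-\tfrac16 f''(x)\Bigr|+\epsilon^{-2}\,\E\bigl|L_{\epsilon,n}(\omega)f(x)-\E[Y]\bigr|.
\end{equation*}
The first term is the bias, bounded by $C\epsilon^\alpha$ via Theorem~\ref{thm:Lepstof''}; the second is controlled by the standard deviation, $\E|L_{\epsilon,n}(\omega)f(x)-\E[Y]|\le\sqrt{\Var(Y)/n}\le C\sqrt{\epsilon/n}$, so after the factor $\epsilon^{-2}$ it is at most $C(n\epsilon^3)^{-1/2}$. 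Thus the total error is at most $C\epsilon^\alpha+C'(n\epsilon^3)^{-1/2}$, and minimizing in $\epsilon$ balances the two contributions at $\epsilon(n)\asymp n^{-1/(2\alpha+3)}$, which produces the claimed rate $n^{-\alpha/(3+2\alpha)}$.

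The main obstacle I expect is not the probabilistic limit itself---the uniform boundedness of the summands trivialises the Lindeberg condition---but the bookkeeping of the two error scales and the boundary. Two points need care. First, the bias-free second limit requires the bias to be swamped by the fluctuations, i.e.\ $\sqrt{n\epsilon^3}\,\epsilon^\alpha\to0$, equivalently $n\epsilon^{3+2\alpha}\to0$; this is compatible with $n\epsilon^3\to\infty$ only on a window $n^{-1/3}\ll\epsilon(n)\ll n^{-1/(2\alpha+3)}$, mirroring the two-sided conditions of Gin\'e--Koltchinskii, so one must record this extra upper restriction on $\epsilon(n)$. Second, at the endpoints $x\in\{-1,1\}$ the set $B(x,\epsilon)\cap[-1,1]$ is one-sided and the computations of $\E[Y]$ and $\Var(Y)$ must be redone by the boundary Taylor argument already used in Theorem~\ref{thm:Lepstof''}; here the hypothesis $f'(\pm1)=0$ is exactly what keeps $\epsilon^{-2}\E[Y]$ finite, and the normalisation by $\epsilon^{-1}$ rather than $|\tilde B(x,\epsilon)|^{-1}$ must be handled with attention to the limiting constant.
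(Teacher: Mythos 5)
Your proposal follows essentially the same route as the paper's proof: compute the mean $\E[Y]=\mathcal{L}_\epsilon f(x)$ and variance $\tfrac13 f'(x)^2\epsilon(1+O(\epsilon))$ of the i.i.d.\ summands, invoke the CLT of Theorem~\ref{thm-monte-carlo}, convert the bias via Theorem~\ref{thm:Lepstof''}, and minimize $C\epsilon^\alpha+C'(n\epsilon^3)^{-1/2}$ to obtain $\epsilon(n)\asymp n^{-1/(3+2\alpha)}$. Your additional remarks---that the $\epsilon(n)\to0$ case is a triangular array requiring Lindeberg--Feller rather than Lindeberg--L\'evy, that the bias-free second limit tacitly needs the upper window $n\epsilon^{3+2\alpha}\to0$, and that the endpoints require the one-sided Taylor argument---are careful points the paper's proof passes over silently, but they refine rather than alter the argument.
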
	
\begin{proof}
Compute
\begin{align*}
\lefteqn{\Var \Bigl( \epsilon^{-1} \mathbf{1}_{B(x,\epsilon)}(x_j) \bigl( f(x_j)-f(x)\bigr) \Bigr)}\quad&\\
&=\epsilon^{-2} \mathbf{E}\Bigl(  \mathbf{1}_{B(x,\epsilon)}(x_j) \bigl( f(x_j)-f(x)\bigr)^2 \Bigr)\\
&= \epsilon^{-2} \mathbf{E}\Bigl(  \mathbf{1}_{B(x,\epsilon)}(x_j) (f'(x))^2 (x_j-x)^2(1+O(\epsilon)) \Bigr)\\
&= \frac13 (f'(x))^2\epsilon(1+ O(\epsilon))
\end{align*}
and
\begin{equation*}
\mathbb{E}\Bigl(\epsilon^{-1} \mathbf{1}_{B(x,\epsilon)}(x_j) \bigl( f(x_j)-f(x)\bigr) \Bigr)
=
 \frac1{2\epsilon} \int_{B(x,\epsilon)} f(y)-f(x)\, dy
= \mathcal{L}_\epsilon f(x)
\end{equation*}
to see by substitution into the result of Theorem~\ref{thm-monte-carlo} and using $\sqrt{1+O(\epsilon)}=1+O(\epsilon)$ that
\begin{equation*} 
\frac { \sqrt{n} \bigl( L_{\epsilon,n}(\omega) f(x) - \mathcal{L}_\epsilon f(x) \bigr)} {  \frac1{\sqrt{3}} (f'(x))\sqrt{\epsilon}(1+O(\epsilon))  } \to Z
\end{equation*}
in distribution, where $Z$ has normal distribution $N(0,1)$.  Since we know  from Theorem~\ref{thm:Lepstof''} that our hypotheses provide $|\epsilon^{-2}\mathcal{L}_\epsilon- \frac16 f''(x)|\leq C\epsilon^{\alpha}$  we can conclude that
\begin{equation} \label{eqn:randomlaponinterval1}
 \sqrt{n}\epsilon^{\frac32}  \bigl( \epsilon^{-2} L_{\epsilon,n}(\omega) f(x) - \frac16 f''(x) +O(\epsilon^\alpha) \bigr) \to  \frac1{\sqrt{3}} (f'(x))(1+O(\epsilon))  Z
\end{equation}
in distribution, which is the desired result.  Since the expected size of $\bigl|\epsilon^{-2} L_{\epsilon,n}(\omega) f(x) - \frac16 f''(x)\bigr|$ has a term of size $C\epsilon^\alpha$ and another of size $C'(n\epsilon^3)^{-\frac12}$ it is minimized with  \begin{equation}\label{eq-min-eps}
\epsilon(n)=C'' n^{-\frac1{3+2\alpha}},
\end{equation}  providing the other bound in the statement.
\end{proof}

Now let us consider the difference between $\mathcal{L}_{\epsilon,n}$ and $L_{\epsilon,n}$. Both contain the sum $\sum_{E_\epsilon(x)}f(x_j(\omega))-f(x)$, which is zero if $E_\epsilon(x)=\emptyset$, but the former is normalized by the number of elements in $E_\epsilon(x)$ and the latter by dividing by $n\epsilon$. From this, for those $\omega$ such that $E_\epsilon(x)$ is non-empty, we have
\begin{align*}
\mathcal{L}_{\epsilon,n}(\omega)f(x) - L_{\epsilon,n}(\omega)f(x)
&=\Bigl( \frac1{\#E_\epsilon(x)} - \frac1{\epsilon n} \Bigr)\sum_{E_\epsilon(x)}f(x_j(\omega))-f(x)\\
&= \frac1{\epsilon} \bigl( \epsilon - \frac1n\#E_\epsilon(x)\bigr) \mathcal{L}_{\epsilon,n}(\omega)f(x)
\end{align*}
It is convenient to view $\frac1n\#E_\epsilon(x)$ as $\frac1n\sum_{j=1}^n \mathbf{1}_{B(x,\epsilon)}(x_j)$ an average of i.i.d random variables, with expectation $\mathbb{E} \mathbf{1}_{B(x,\epsilon)}(x_1)\to\epsilon$ and variance $\Var \mathbf{1}_{B(x,\epsilon)}(x_1)\to\epsilon$ as $n\to\infty$, provided that $x\in[-1+\epsilon,1-\epsilon]$. The central limit theorem then gives
\begin{equation*}
\frac{\sqrt{n}}{\sqrt{\epsilon}} \bigl( \epsilon - n^{-1}\#E_\epsilon(x)\bigr) \to Z
\end{equation*}
with convergence in distribution, where $Z$ is a standard normal, and therefore 
\begin{equation*}
	\epsilon^{-2}\bigl|\mathcal{L}_{\epsilon,n}(\omega)f(x) - L_{\epsilon,n}(\omega)f(x)\bigr|
	=\frac1{\sqrt{n\epsilon}}\frac{\sqrt{n}}{\sqrt{\epsilon}} \bigl| \epsilon - n^{-1}\#E_\epsilon(x)\bigr|
	\leq \frac{C}{\sqrt{n\epsilon}}
	\end{equation*}
as $n\to\infty$.  We conclude from~\eqref{eqn:randomlaponinterval1} that
\begin{equation*}
 \sqrt{n}\epsilon^{\frac32}  \bigl( \epsilon^{-2} \mathcal{L}_{\epsilon,n}(\omega) f(x) - \frac16 f''(x) +O((n\epsilon)^{-\frac12})+O(\epsilon^\alpha) \bigr)  \xrightarrow[\text{distr.}]{}  \frac1{\sqrt{3}} (f'(x))(1+O(\epsilon))   Z.
\end{equation*}

Since the hypotheses of Theorem~\ref{thm:randomlaponinterval} include $n\epsilon(n)^3\to0$ which implies $n\epsilon\to\infty$ and also that $[-1+\epsilon(n),1-\epsilon(n)]\to(-1,1)$  we obtain the following corollary.
\begin{cor}\label{cor:randomlaponinterval}
Under the hypotheses of Theorem~\ref{thm:randomlaponinterval}, at points $x\in(-1,1)$  we have
\begin{equation*}
\sqrt{ n\epsilon^3} \bigl( \epsilon^{-2} \mathcal{L}_{\epsilon,n}(\omega) f(x) - \frac16 f''(x) +O((n\epsilon)^{-\frac12}) + O(\epsilon^\alpha)\bigr)  \xrightarrow[\text{distr.}]{} \frac1{\sqrt{3}} |f'(x)|(1+O(\epsilon))   Z
\end{equation*}
where $Z$ is a standard normal distribution $N(0,1)$.  Minimizing on $\epsilon(n)$ we find there is a constant $C$ so
\begin{equation*}
\mathbb{E}\Bigl| \epsilon^{-2} \mathcal{L}_{\epsilon,n}(\omega) f(x) - \frac16 f''(x)\Bigr|
\leq C n^{-\frac\alpha{3+2\alpha}}.
\end{equation*}
\end{cor}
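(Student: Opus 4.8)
The plan is to assemble the corollary from the convergence-in-distribution statement displayed immediately before it, together with the bias--variance bookkeeping already carried out in the proof of Theorem~\ref{thm:randomlaponinterval}. The only genuinely new points are (i) checking that the hypotheses push every error term into the stated regime, and (ii) upgrading the distributional statement into a bound on $\mathbb{E}|\cdots|$, which does not follow from convergence in distribution by itself.

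First I would record the consequences of the hypotheses $\epsilon(n)\to0$ and $n\epsilon(n)^3\to\infty$. Writing $n\epsilon=n\epsilon^3/\epsilon^2$ and using that both factors blow up gives $n\epsilon\to\infty$, so the $O((n\epsilon)^{-1/2})$ term arising from the comparison of $\mathcal{L}_{\epsilon,n}$ with $L_{\epsilon,n}$ tends to zero; moreover $(n\epsilon)^{-1/2}=\epsilon\,(n\epsilon^3)^{-1/2}\ll(n\epsilon^3)^{-1/2}$, so that correction is asymptotically dominated by the fluctuation scale $(n\epsilon^3)^{-1/2}$. Also, for fixed $x\in(-1,1)$ we have $x\in[-1+\epsilon,1-\epsilon]$ once $\epsilon$ is small, which legitimizes the central limit theorem for $n^{-1}\#E_\epsilon(x)$ used just above. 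The convergence-in-distribution claim is then exactly the displayed limit preceding the corollary, with $f'(x)$ replaced by $|f'(x)|$; this replacement is harmless because $Z$ is symmetric, so $cZ$ and $|c|Z$ have the same law.

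For the expectation bound I would avoid the distributional limit (which controls no moments without uniform integrability) and instead decompose
\begin{equation*}
\epsilon^{-2}\mathcal{L}_{\epsilon,n}f(x)-\tfrac16 f''(x)
= \underbrace{\epsilon^{-2}\bigl(\mathcal{L}_\epsilon f(x)-\tfrac{\epsilon^2}{6} f''(x)\bigr)}_{\text{bias}}
 + \underbrace{\epsilon^{-2}\bigl(L_{\epsilon,n}f(x)-\mathcal{L}_\epsilon f(x)\bigr)}_{\text{mean-zero fluctuation}}
 + \underbrace{\epsilon^{-2}\bigl(\mathcal{L}_{\epsilon,n}f(x)-L_{\epsilon,n}f(x)\bigr)}_{\text{normalization correction}}.
\end{equation*}
The bias term is deterministic and bounded by $C\epsilon^\alpha$ by Theorem~\ref{thm:Lepstof''}. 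The fluctuation term has mean zero since $\mathbb{E}\,L_{\epsilon,n}f(x)=\mathcal{L}_\epsilon f(x)$, so Jensen's inequality gives $\mathbb{E}|\cdots|\leq(\Var)^{1/2}$; using the variance computation $\Var\bigl(\epsilon^{-1}\mathbf{1}_{B(x,\epsilon)}(x_j)(f(x_j)-f(x))\bigr)=\tfrac13(f'(x))^2\epsilon(1+O(\epsilon))$ from the proof of Theorem~\ref{thm:randomlaponinterval} and averaging over $n$ i.i.d.\ terms, this standard deviation is $\tfrac1{\sqrt3}|f'(x)|(n\epsilon^3)^{-1/2}(1+O(\epsilon))$. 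For the normalization correction I would use the product identity established just above, writing it as $\tfrac1\epsilon\bigl(\epsilon-n^{-1}\#E_\epsilon(x)\bigr)\cdot\epsilon^{-2}\mathcal{L}_{\epsilon,n}f(x)$ and bounding its expectation by Cauchy--Schwarz: the first factor has standard deviation $(n\epsilon)^{-1/2}$ from $\Var(n^{-1}\#E_\epsilon(x))=\epsilon(1-\epsilon)/n$, and the second factor has bounded second moment, giving $\mathbb{E}|\cdots|=O((n\epsilon)^{-1/2})$.

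Collecting these bounds yields $\mathbb{E}\bigl|\epsilon^{-2}\mathcal{L}_{\epsilon,n}f(x)-\tfrac16 f''(x)\bigr|\leq C\epsilon^\alpha + C'(n\epsilon^3)^{-1/2}$, exactly the two competing scales appearing in the proof of Theorem~\ref{thm:randomlaponinterval}; minimizing the sum over $\epsilon$ (balancing $\epsilon^\alpha$ against $n^{-1/2}\epsilon^{-3/2}$) reproduces $\epsilon(n)=C''n^{-1/(3+2\alpha)}$ from~\eqref{eq-min-eps} and the bound $Cn^{-\alpha/(3+2\alpha)}$. I expect the only delicate point to be the Cauchy--Schwarz estimate for the normalization correction: one must control the second moment of $\epsilon^{-2}\mathcal{L}_{\epsilon,n}f(x)$ including the low-probability events on which $\#E_\epsilon(x)$ is small and the normalization $1/\#E_\epsilon(x)$ large, verifying that these contribute negligibly so that the second factor is genuinely $O(1)$.
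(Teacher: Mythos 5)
Your proposal is correct, and for the convergence-in-distribution claim it follows the paper's route exactly: the paper derives the corollary from the displayed limit for $L_{\epsilon,n}$ by writing $\mathcal{L}_{\epsilon,n}f-L_{\epsilon,n}f=\frac1\epsilon\bigl(\epsilon-n^{-1}\#E_\epsilon(x)\bigr)\mathcal{L}_{\epsilon,n}f$, applying the CLT to $n^{-1}\#E_\epsilon(x)$, and noting that $n\epsilon\to\infty$ and $x\in[-1+\epsilon,1-\epsilon]$ eventually. Where you differ is the expectation bound: the paper simply asserts that the expected error ``has a term of size $C\epsilon^\alpha$ and another of size $C'(n\epsilon^3)^{-1/2}$'' and minimizes, whereas you supply an actual moment argument via the three-term decomposition (deterministic bias, mean-zero fluctuation controlled by Jensen and the variance computation, normalization correction controlled by Cauchy--Schwarz). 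This is a genuine strengthening: convergence in distribution alone gives no control of $\mathbb{E}|\cdot|$, and the paper's inequality $\epsilon^{-2}|\mathcal{L}_{\epsilon,n}f-L_{\epsilon,n}f|\leq C(n\epsilon)^{-1/2}$ implicitly uses $\epsilon^{-2}\mathcal{L}_{\epsilon,n}f=O(1)$, which holds only with high probability (the deterministic bound is merely $O(\epsilon^{-1})$). The delicate point you flag --- the second moment of $\epsilon^{-2}\mathcal{L}_{\epsilon,n}f$ on the event that $\#E_\epsilon(x)$ is small --- is exactly what the paper glosses over; it is handled by the deterministic bound $|\mathcal{L}_{\epsilon,n}f|\leq\|f'\|_\infty\epsilon$ together with a Chernoff bound showing $\mathbb{P}\bigl(\#E_\epsilon(x)\leq\tfrac12 n\epsilon\bigr)$ decays exponentially in $n\epsilon$, which since $n\epsilon\geq\epsilon^{-2}$ eventually makes that contribution negligible against every power of $\epsilon$. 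Your version is the one I would keep.
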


Although it is not apparent how the preceding results might be improved for low-frequency eigenfunctions, it appears from our data that in order to obtain a good approximation of the eigenmap from $[-1,1]$ to $\mathbb{R}^2$ one can take $\epsilon(n)$ much larger than is required by Theorem~\ref{thm:randomlaponinterval}.  In the next section we will see that the image under the Laplacian eigenmap is a quadratic curve (Corollary~\ref{cor:intervalunderemap}).  Some numerical data obtained by taking $n=5000$ points chosen uniformly at random in $[-1,1]$ and computing $\mathcal{L}_{\epsilon,n}$ for $\epsilon=0.01$ is in Figure~\ref{uni-fig}.  Since these are Euclidean eigenfunctions they are smooth and $\alpha=1$, so the estimate from Corollary~\ref{cor:randomlaponinterval} suggests that the error in the approximation of $f''$ could be as large as $(n\epsilon(n)^{3})^{-\frac12}$, which is approximately $14$, yet on the graphs the error in the eigenmap appears to be of the order of $0.1$, or $140$ times smaller.  We have further data that suggests the eigenmap images for Laplacians with $n$ random points and $\epsilon(n)=\frac Cn$ get closer to the expected quadratic curve as $n$ increases.  This choice of $\epsilon(n)$ is closer to that used in Proposition~\ref{prop:efnsofgraphlaplacianonI} and Corollary~\ref{cor:efnsofgraphlaplacianonI}, and leads to the question of whether the low frequency eigenfunctions of the Laplacian on an interval are already well approximated by those of random graph Laplacians $\mathcal{L}_{\epsilon(n),n}(\omega)$ with $\epsilon(n)$ comparable to $\frac Cn$, or at least a lot larger than the $n^{-\frac15}$ that would be required to apply the theorems of this section.  Motivated by this, we record the following conjecture.

\begin{figure}[h]
	\centering
	\includegraphics[width=\www]{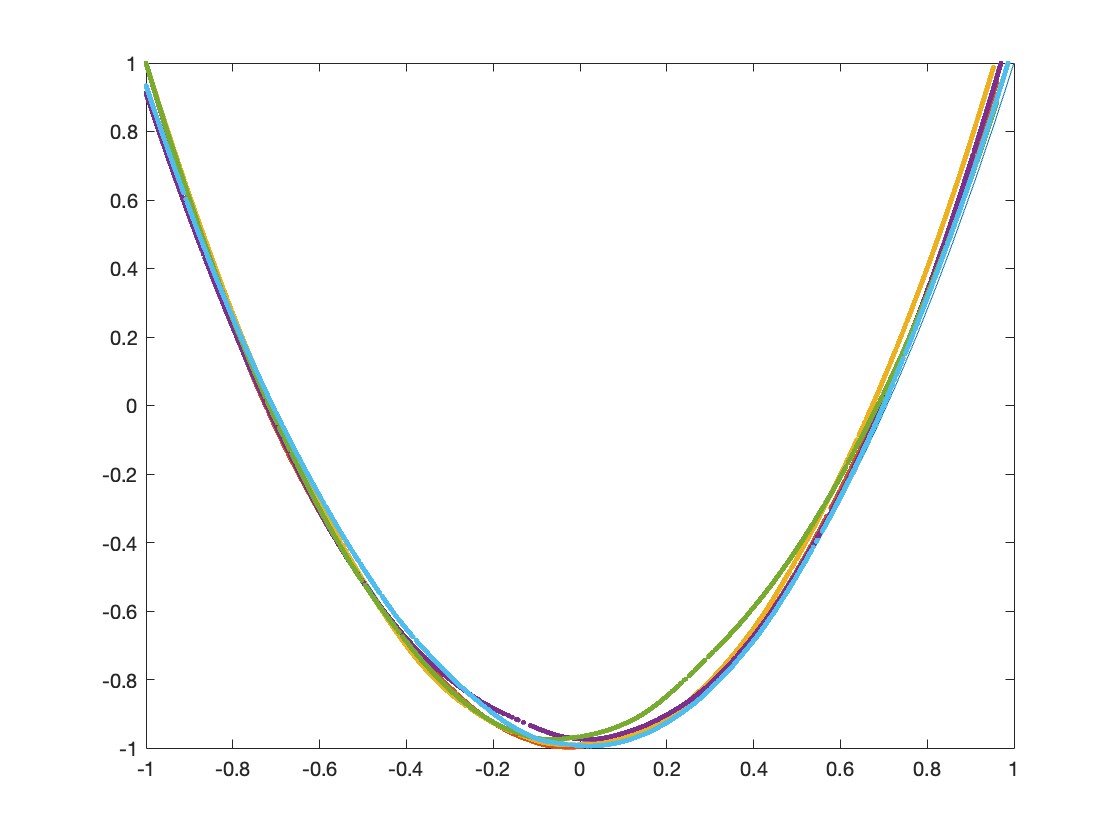}
	\includegraphics[width=\www]{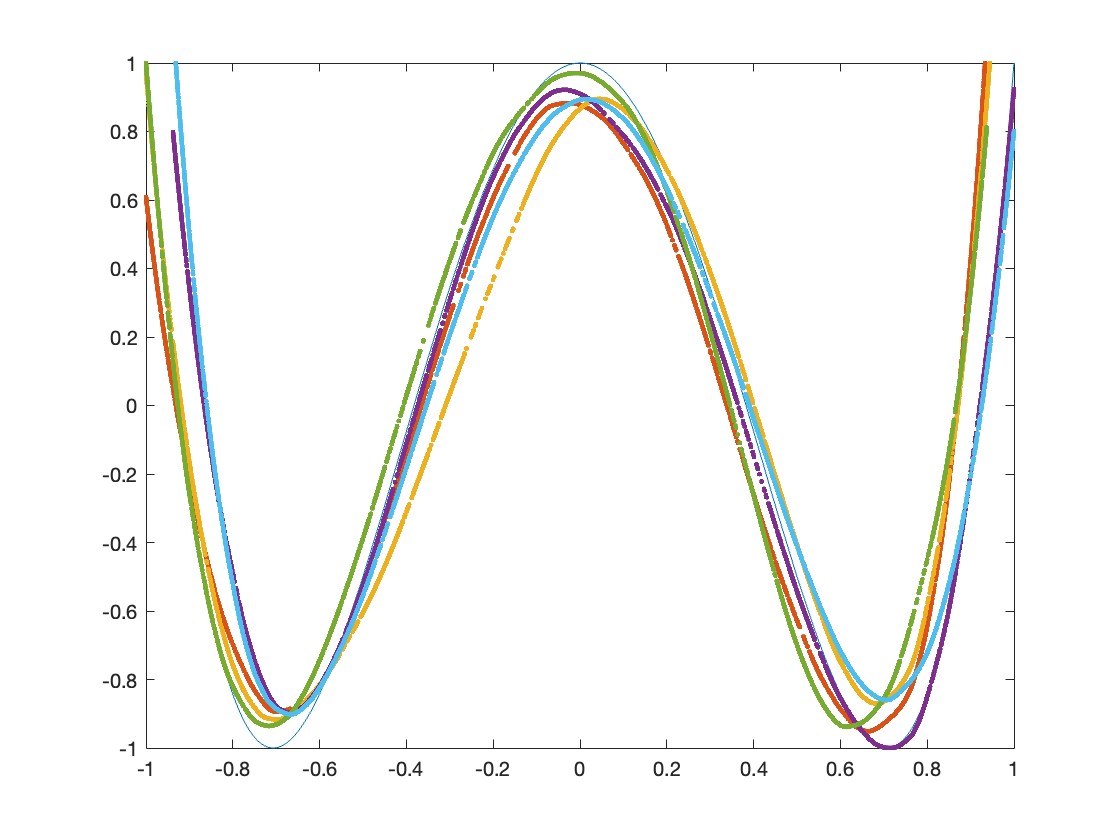}
	\includegraphics[width=\www]{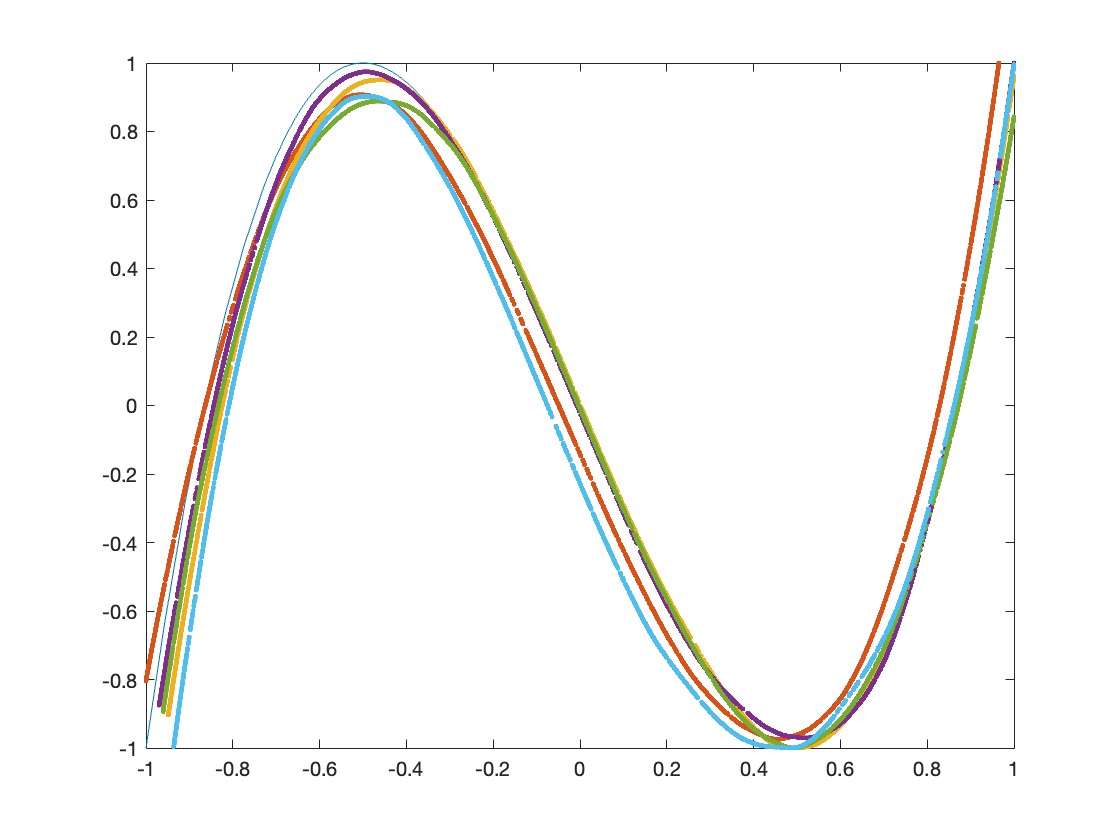}
	\caption{Laplacian eigenmaps of $n=5000,\varepsilon=0.01$, 5 runs. Top: 
		1st and 2nd eigenvectors,	$T_2(x)=2x^2-1$.
		Bottom: 	1st and 3rd eigenvectors, $T_3(x)=4x^3-3x$; 1st and 4th eigenvectors, $T_4(x)=8x^4-8x^2+1$.}
	\label{uni-fig} 
\end{figure}

\begin{conj}\label{conj:betterconvergence}
Let $\phi_k$ denote the $k^\text{th}$ eigenfunction of the Laplacian on $[-1,1]$ and $\phi_{\epsilon,n,k}$ denote the $k^\text{th}$ eigenfunction of a graph Laplacian $\mathcal{L}_{\epsilon,n}$.  If  $0<\beta < 1  $ and $\epsilon(n)\sim n^{-\beta}$ then $\|\phi_k - \phi_{\epsilon(n),n,k}\|_{L^\infty}\xrightarrow[]{}0$ as $n\rightarrow\infty$. For $0<\beta<2/3$ the convergence is improved to convergence in energy, and for $0<\beta<1/3$ the convergence is further improved to convergence of the Laplacians. 
\end{conj}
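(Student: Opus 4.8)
The plan is to prove the conjecture in two stages: first for equally spaced (deterministic) sample points, where the graph Laplacian is almost explicitly diagonalizable, and then transferring to i.i.d.\ points by concentration. The three thresholds $\beta<1$, $\beta<2/3$, $\beta<1/3$ correspond exactly, with $\epsilon=n^{-\beta}$, to the conditions $n\epsilon\to\infty$, $n\epsilon^{3/2}\to\infty$, and $n\epsilon^{3}\to\infty$, and I would organize the argument so that each of the three convergences (of function values, of the energy, of the Laplacian) is reduced to concentration of a random sum at the matching level of regularity. The most stringent case, convergence of the Laplacians, is anchored by Corollary~\ref{cor:randomlaponinterval}: the hypothesis $n\epsilon^3\to\infty$ is precisely what makes the fluctuation $(n\epsilon^3)^{-1/2}$ of $\epsilon^{-2}\mathcal{L}_{\epsilon,n}(\omega)f$ around $\tfrac16 f''$ vanish.

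\emph{Stage A (equally spaced points).} For $x_j=-1+\tfrac{2(j-1)}{n-1}$ with spacing $h=\tfrac{2}{n-1}$ and $\epsilon=n^{-\beta}$, an interior point has $K\sim\epsilon/h\sim n^{1-\beta}$ neighbours on each side, and the Dirichlet-kernel identity gives
\[
\mathcal{L}_{\epsilon,n}\,e^{i\lambda x_j}=e^{i\lambda x_j}\Bigl(\tfrac{1}{2K+1}\tfrac{\sin((2K+1)\lambda h/2)}{\sin(\lambda h/2)}-1\Bigr),
\]
so trigonometric functions diagonalize the interior operator and the only nontrivial work is the boundary: near $\pm1$ the stencil is truncated and translation invariance fails. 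I would analyze the $K\times K$ boundary block by a transfer-matrix argument, showing that the admissible eigenfunctions are exactly those extending smoothly by even reflection across the endpoint, so that the truncated averaging enforces a \emph{discrete Neumann} condition converging to $f'(\pm1)=0$. Matching this with the interior eigenvalue formula yields $\lambda_{\epsilon,n,k}\to(\tfrac{k\pi}{2})^2$ and exhibits explicit near-eigenfunctions, recovering Proposition~\ref{prop:efnsofgraphlaplacianonI} at $K=1$. This establishes the deterministic analogue of all three statements (in fact with \emph{better} thresholds, so the stated thresholds are genuinely random constraints) and, more importantly, identifies the limiting spectrum and the emergence of Neumann boundary conditions.

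\emph{Stage B (i.i.d.\ points) and spectral stability.} Here I would work through the Dirichlet form $\mathcal{E}_{\epsilon,n}(f)=\tfrac12\sum_{i,j}w_{ij}(f(x_i)-f(x_j))^2$ and the Courant--Fischer min-max. Upper bounds on $\lambda_{\epsilon,n,k}$ follow by inserting the continuum Neumann eigenfunctions as trial functions and controlling the Rayleigh quotient with Theorems~\ref{thm:Lepstof''} and~\ref{thm:graphtoaveragingoninterval}; the matching lower bounds, the harder direction, require showing that the discrete energy dominates the continuum energy up to $o(1)$, combining the Stage~A estimate with a concentration bound for $\mathcal{E}_{\epsilon,n}$ about its mean. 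Once the first $k_0$ eigenvalues converge and are separated---the low Neumann eigenvalues $(\tfrac{k\pi}{2})^2$ are simple, giving a uniform spectral gap---a Davis--Kahan ($\sin\Theta$) argument upgrades eigenvalue convergence to convergence of the spectral projections, and hence of the eigenfunctions in the relevant norm after composing with an interpolation operator from $S_n(\omega)$ to $[-1,1]$.

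The main obstacle is Stage~B in the regime $\beta$ near $1$. There $\epsilon=n^{-\beta}$ is only marginally larger than the typical gap $\sim\tfrac1n$ between consecutive i.i.d.\ points, so each ball $B(x,\epsilon)$ holds only $O(n\epsilon)=O(n^{1-\beta})$ points and the random graph sits near its connectivity threshold: degrees are highly variable and the largest gap is of order $\tfrac{\log n}{n}$, so some balls can be nearly empty. Controlling the low eigenfunctions uniformly then demands (i) upgrading the fixed-$x$ central limit theorem of Theorem~\ref{thm:randomlaponinterval} to an estimate uniform in $x\in[-1,1]$, presumably by a chaining or empirical-process bound over the class $\{y\mapsto\mathbf{1}_{B(x,\epsilon)}(y)(f(y)-f(x))\}$, and (ii) handling the boundary layer of width $\epsilon$ near $\pm1$, where the truncated stencil of Stage~A and the fluctuations of the point cloud interact. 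I expect the energy-level threshold $\beta<2/3$ to require the most delicate estimate, since it rests on the variance of the \emph{quadratic} form $\mathcal{E}_{\epsilon,n}$ rather than the linear statistics analyzed in Section~\ref{ssec:iidlaponinterval}.
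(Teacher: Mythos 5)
The statement you are addressing is Conjecture~\ref{conj:betterconvergence}: the paper offers no proof of it. The only guidance the authors give is the paragraph following the conjecture, which proposes a genuinely different strategy from yours: pass to the Green operator $\mathcal{L}^{-1}$ of a Laplacian with positive spectral gap, use the fact that the Dirichlet energy on $[-1,1]$ is a resistance form in Kigami's sense so that both the continuum and the discrete Green operators are integral operators with explicit, simple kernels, and estimate the difference of the Green kernels directly; convergence of the low eigenfunctions would then follow from resolvent convergence. Your route --- exact diagonalization for equally spaced points, then Courant--Fischer plus concentration of the discrete Dirichlet form and a Davis--Kahan step for i.i.d.\ points --- is the standard spectral-convergence machinery in the spirit of~\cite{burago2015graph}. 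The Green-kernel route trades the two difficulties you correctly flag (the boundary layer and uniformity in $x$ of the concentration bound) for a single kernel estimate, which is presumably why the authors prefer it; your route, if completed, would more plausibly yield the explicit $\beta$-thresholds, whose bookkeeping against $n\epsilon^a\to\infty$ you have matched correctly to Corollary~\ref{cor:randomlaponinterval}.

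As it stands your proposal is a plan rather than a proof, and several named steps are genuine gaps, not routine. First, the Stage~A claim that the truncated boundary block \emph{exactly} admits the trigonometric eigenfunctions via even reflection holds for $K=1$ (this is Proposition~\ref{prop:efnsofgraphlaplacianonI}) but fails for $K>1$: the asymmetric averaging window near $\pm1$ makes the sines and cosines only quasimodes, so you need a quasimode-to-spectrum argument whose error must be shown to beat the spectral gap. Second, $\mathcal{L}_{\epsilon,n}$ as defined in Definition~\ref{def:graphlaplacians} is a degree-normalized, non-symmetric operator, so Courant--Fischer applies only after symmetrizing with respect to the empirical degree measure; the degree fluctuations are exactly the $O((n\epsilon)^{-1/2})$ term computed before Corollary~\ref{cor:randomlaponinterval} and must be carried through the min--max argument, not just the pointwise estimates. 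Third, Davis--Kahan delivers convergence of spectral projections in $L^2$ or in energy, while the conjecture asserts $L^\infty$ convergence in the \emph{widest} regime $\beta<1$; the upgrade requires a discrete elliptic or maximum-principle estimate you do not indicate. Finally, the boundary cannot be treated as a perturbation of the interior: Theorem~\ref{thm:Lepstof''} shows that $\epsilon^{-2}\mathcal{L}_\epsilon f(\pm1)$ diverges unless $f'(\pm1)=0$, and it is precisely this mechanism that selects the Neumann eigenfunctions, so the width-$\epsilon$ boundary layer is where the limit operator itself is determined and where your uniform concentration estimate must be proved, not assumed.
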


It seems possible that this conjecture might be addressed by studying the Green operator $\mathcal{L}^{-1}$ for a Laplacian with positive spectral gap, such as the Dirichlet Laplacian or the Neumann Laplacian on the quotient of its domain by constant functions.  Since the associated Dirichlet energy $\mathcal E(f):=\int_{-1}^{1} |f'|^2dx$ is a resistance form in the sense of Kigami~\cite{Kigamibook}, the Green operator can be written as an integral operator with a kernel that has a particularly simple form, see~\cite[Theorem 4.3]{Kigami2012},  \cite[and references therein]{Croydon2018}.  The same is true for $\mathcal{L}_{\epsilon,n}^{-1}$ if we permit the  connections to be between pairs of points in neighboring dyadic intervals of size comparable to $\epsilon$ instead of metric $\epsilon$-balls, so it might be possible to estimate the difference between the associated Green kernels in this case (see Section~\ref{ssec:avlaponSG} below for a description of how this would work in the case of the Sierpinski gasket, which has a cell decomposition akin to the dyadic intervals). Estimates for eigenfunctions with small eigenvalues could then be obtained from estimates for the Green operator.  This question will be considered in future work.

\subsection{Eigenmaps and orthogonal polynomials} 

As discussed in Section~\ref{ssec:IntervalLaplacians} and Proposition~\ref{prop:efnsofgraphlaplacianonI}, the eigenfunctions on $[-1,1]$ of both the Neumann Laplacian and a certain graph Laplacian with equally spaced points are the functions $\cos \frac12k\pi x$, $k$ an even integer, and $\sin \frac12 k\pi x$, $k$ an odd integer.  The image under the resulting eigenmaps then lie on the graphs of the Chebyshev polynomials.
\begin{pr}
Let $\phi_j(x)$ be the $j^{\text{th}}$ eigenfunction of the Neumann Laplacian and let $T_j$ denote the $j^{\text{th}}$ Chebyshev polynomial of the first kind.  Then $\phi_1(x)$ is the first non-constant eigenfunction and
\begin{equation*}
\phi_j(x) = \begin{cases} 
	(-1)^{\frac{j+2}2} T_j(\phi_1(x)) &\text{ if $j$ is even, and}\\
	(-1)^{\frac{j+1}2} T_j(\phi_1(x)) &\text{ if $j$ is odd.}
	\end{cases}
\end{equation*}
\end{pr}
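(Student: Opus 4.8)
The plan is to reduce everything to the trigonometric characterization of the Chebyshev polynomials, $T_j(\cos\theta)=\cos(j\theta)$, combined with the explicit eigenfunctions recorded in Section~\ref{ssec:IntervalLaplacians}: the non-constant Neumann eigenfunctions are $\phi_j(x)=\sin(\tfrac{j\pi}{2}x)$ for $j$ odd and $\phi_j(x)=\cos(\tfrac{j\pi}{2}x)$ for $j$ even, so that $\phi_1(x)=\sin(\tfrac{\pi}{2}x)$ is the first non-constant eigenfunction (this last claim is immediate from the strict ordering of the eigenvalues $(\tfrac{j\pi}{2})^2$). The one observation that drives the proof is that $\phi_1$ is itself a cosine after an affine change of angle: setting $\theta=\tfrac{\pi}{2}(1-x)$ gives $\phi_1(x)=\sin(\tfrac{\pi}{2}x)=\cos\theta$.

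First I would substitute this into the definition of $T_j$, obtaining for every $j$
\begin{equation*}
T_j(\phi_1(x))=T_j(\cos\theta)=\cos(j\theta)=\cos\!\Big(\tfrac{j\pi}{2}-\tfrac{j\pi}{2}x\Big),
\end{equation*}
and then expand with the angle-addition formula
\begin{equation*}
\cos\!\Big(\tfrac{j\pi}{2}-\tfrac{j\pi}{2}x\Big)=\cos\tfrac{j\pi}{2}\,\cos\tfrac{j\pi}{2}x+\sin\tfrac{j\pi}{2}\,\sin\tfrac{j\pi}{2}x.
\end{equation*}
Now I would split on the parity of $j$. When $j$ is even we have $\sin\tfrac{j\pi}{2}=0$, so only the term $\cos\tfrac{j\pi}{2}\,\cos\tfrac{j\pi}{2}x=\cos\tfrac{j\pi}{2}\,\phi_j(x)$ survives; when $j$ is odd we have $\cos\tfrac{j\pi}{2}=0$, so only $\sin\tfrac{j\pi}{2}\,\sin\tfrac{j\pi}{2}x=\sin\tfrac{j\pi}{2}\,\phi_j(x)$ survives. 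In both cases $T_j(\phi_1)$ equals $\phi_j$ up to a factor that is $\cos\tfrac{j\pi}{2}$ or $\sin\tfrac{j\pi}{2}$, and evaluating that factor according to the parity of $\lfloor j/2\rfloor$ produces the $(-1)$ prefactors in the statement.

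The only real work, and the step I expect to be the main obstacle, is the sign bookkeeping: the surviving factor $\cos\tfrac{j\pi}{2}$ (respectively $\sin\tfrac{j\pi}{2}$) is periodic of period four in $j$, so the sign depends on $j\bmod 4$, which is exactly what the two-line piecewise formula encodes once one writes $j=2m$ or $j=2m+1$ and simplifies the resulting $(-1)^m$ into the displayed exponents. I would verify this either by checking the four residue classes of $j$ directly, or by an induction using the Chebyshev three-term recurrence $T_{j+1}=2\phi_1 T_j-T_{j-1}$ together with the product-to-sum identities $2\sin\tfrac{\pi}{2}x\,\cos\tfrac{j\pi}{2}x=\sin\tfrac{(j+1)\pi}{2}x-\sin\tfrac{(j-1)\pi}{2}x$ and its cosine companion, which regenerate the same alternating sign pattern. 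I would also emphasize that these prefactors are meaningful only relative to the normalization of the eigenfunctions fixed above, since each $\phi_j$ is determined only up to sign; adopting a different sign convention for the $\phi_j$ shifts the displayed constants correspondingly, and in particular the signs should be matched against the convention used to produce the fitted curves in Figure~\ref{fig-best-fit}.
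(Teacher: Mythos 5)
Your proposal is correct and follows essentially the same route as the paper: write $\phi_1(x)=\cos\theta$ for $\theta=\tfrac{\pi}{2}(1-x)$, invoke $T_j(\cos\theta)=\cos(j\theta)$, and unwind the phase shift according to the parity of $j$. One caveat: carried out carefully, your computation yields the prefactor $(-1)^{\lfloor j/2\rfloor}$ rather than the $(-1)^{\lfloor j/2\rfloor+1}$ displayed in the statement (test $j=1$: clearly $\phi_1=+T_1(\phi_1)$, and likewise $\phi_2=1-2\phi_1^2=-T_2(\phi_1)$), so the extra sign is an off-by-one slip in the paper's own bookkeeping rather than something your argument should reproduce; as you rightly note, the discrepancy is harmless because each eigenfunction is only determined up to sign.
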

\begin{proof}
Put $\theta=\frac12\pi(x-1)$ so that $\phi_1(x)=\sin(\theta+\frac12\pi)=\cos\theta$.  Recall that the Chebyshev polynomials of the first kind satisfy  $T_j(\cos\theta)=\cos(j\theta)$, so
\begin{align*}
\phi_{2j}(x)&=\cos(j\pi x) = \cos(2j\theta +j\pi) = (-1)^{j+1}\cos (2j\theta) = (-1)^{j+1)} T_{2j}(\cos\theta)\\
&=(-1)^{j+1}T_{2j}(\phi_1(x)),\\
\phi_{2j+1}(x)&=\sin\bigl(\frac12(2j+1)\pi x\bigr) = \sin\Bigl((2j+1)\theta+\frac12(2j+1)\pi\Bigr)\\
&= (-1)^{j+1}\cos((2j+1)\theta) = (-1)^{j+1}T_{2j+1}(\cos\theta)\\
& = (-1)^{j+1}T_{2j+1}(\phi_1(x)).\qedhere
\end{align*}
\end{proof}

Since $\phi_2(x)=T_2(\phi_1(x))=1-2(\phi_1(x))^2$ we arrive at a description of the image of our interval under the eigenmap to $\mathbb{R}^2$.

\begin{cor}\label{cor:intervalunderemap}
The image of $[-1,1]$ under the two-dimensional Laplacian eigenmap is the curve $\{(s,1-2s^2):s\in[-1,1]\}$.
\end{cor}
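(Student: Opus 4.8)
The plan is to read off the image directly from the explicit form of the two smallest non-constant eigenfunctions. The two-dimensional Laplacian eigenmap is the map $\Phi = (\phi_1,\phi_2):[-1,1]\to\mathbb{R}^2$ built from the eigenfunctions of the two smallest non-zero eigenvalues of the Neumann Laplacian, namely $\phi_1(x)=\sin\frac12\pi x$ and $\phi_2(x)=\cos\pi x$ as computed in Section~\ref{ssec:IntervalLaplacians}. The first step is simply to set $s=\phi_1(x)$ and to express the second coordinate as a function of $s$ alone.

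For the second coordinate I would use the double-angle identity, which is the $j=2$ instance of the preceding Proposition: $\phi_2(x)=\cos\pi x = 1-2\sin^2\tfrac12\pi x = 1-2\phi_1(x)^2 = 1-2s^2$. Hence every point of the image has the form $(s,1-2s^2)$, so the image is contained in the parabolic arc $\{(s,1-2s^2)\}$.

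It then remains to check that $s=\phi_1(x)$ sweeps out the entire parameter interval $[-1,1]$. Since $\phi_1(x)=\sin\frac12\pi x$ is continuous and strictly increasing on $[-1,1]$ with $\phi_1(-1)=-1$ and $\phi_1(1)=1$, it is a bijection of $[-1,1]$ onto itself; hence as $x$ ranges over $[-1,1]$ the parameter $s$ ranges over all of $[-1,1]$, and the image is exactly $\{(s,1-2s^2):s\in[-1,1]\}$.

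There is no substantive obstacle here: the corollary is immediate once the Chebyshev/double-angle relation between $\phi_2$ and $\phi_1$ is in hand and the surjectivity of $\phi_1$ onto $[-1,1]$ is noted. The only point that needs care is fixing the sign in $\phi_2=1-2\phi_1^2$ rather than $2\phi_1^2-1$, and this is pinned down unambiguously by the explicit identity $\cos\pi x=1-2\sin^2\frac12\pi x$ recorded in Section~\ref{ssec:IntervalLaplacians}.
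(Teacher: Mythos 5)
Your proposal is correct and follows essentially the same route as the paper: the corollary is read off from the identity $\phi_2=\cos\pi x=1-2\sin^2\tfrac12\pi x=1-2\phi_1^2$, which is the $j=2$ case of the Chebyshev relation established just before. Your explicit check that $\phi_1=\sin\tfrac12\pi x$ is a bijection of $[-1,1]$ onto itself (so the image is the whole arc, not just a subset) is a small point the paper leaves implicit, and your care with the sign is warranted since $T_2(s)=2s^2-1$ while the correct relation is $\phi_2=-T_2(\phi_1)=1-2\phi_1^2$.
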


\begin{rem}\label{rem-ren}
A different normalization used in our numerics means that the graphs shown in Figures~\ref{fig-best-fit} and \ref{uni-fig} correspond to the image $\{(s,2s^2-1):s\in[-1,1]\}$.
\end{rem}

A similar result holds for the eigenfunctions $\psi_j$ of the Dirichlet Laplacian on $[-1,1]$, which are $\cos\frac12(k\pi x)$ for $k$ odd and $\sin\frac12(k\pi x)$ for $k$ even. In this case $\psi_1(x)=\cos\frac12\pi x $ and $\psi_{2j+1}=\cos \bigl( (2j+1)\frac12\pi x\bigr)=T_{2j+1}(\psi_1(x))$.  At the same time we have 
\begin{align*}
	\psi_{2j}(x)
	&=\sin (j\pi x)
	=\sin\bigl( ((2j-1)+1)\frac12\pi x\bigr)
	= \sin\bigl(\frac12\pi x\bigr) U_{2j-1}\Bigl(\cos\bigl( \frac12\pi x\bigr)\Bigr)\\
	&=\sqrt{1-(\psi_1(x))^2} U_{2j-1}(\psi_1(x))
	\end{align*}
where the $U_j$ are Chebyshev functions of the second kind, defined using $U_k(\cos\theta)\sin\theta=\sin((k+1)\theta)$.

Since we know that any self-adjoint Laplacian will have eigenfunctions $\phi_k(x)$ that are orthogonal, and the first non-constant eigenfunction is generally monotonic and can serve as a coordinate on $[-1,1]$, it seems plausible that the Laplacian eigenmaps on the interval arise as orthogonal polynomials for a suitably chosen measure on $[-1,1]$, however we have not succeeded in verifying this even in the case of the Laplacian $\frac{d^2}{dx^2}$ with Robin boundary conditions.

 \section{Gaussian and exponential sampling on $\mathbb{R}$}
 
  \begin{figure}[h]
  	\centering
  	\includegraphics[width=\www]{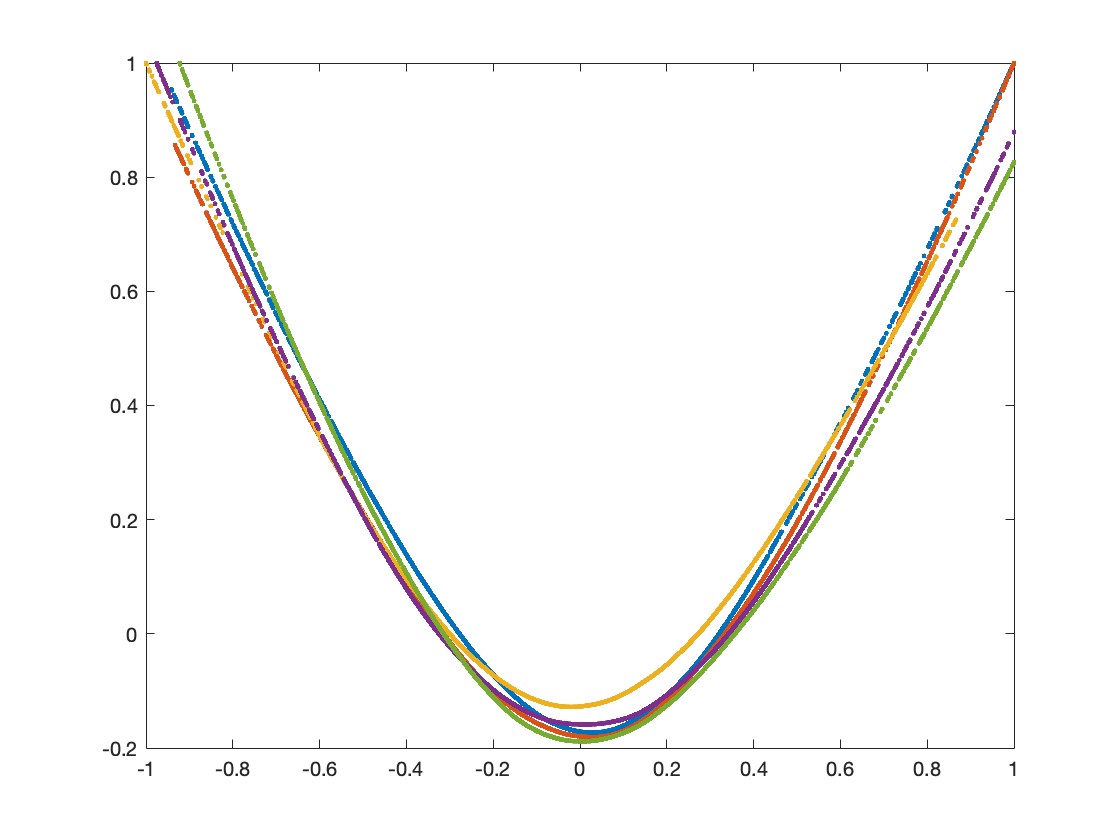}
  	\includegraphics[width=\www]{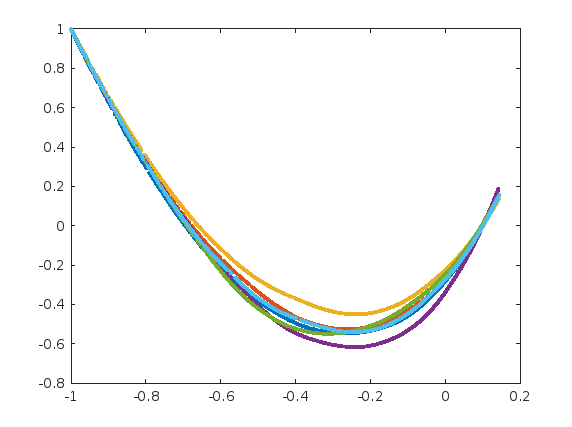}
  	\caption{Eigenmap of Gaussian (left) and exponentially (right) distributed random points}
  	\label{gauss-exp-fig}\end{figure} 
  
Figure~\ref{gauss-exp-fig} shows the image of the real line under the two dimensional eigenmap obtained from a graph Laplacian on points sampled from Gaussian and exponential distributions.  Several curves are plotted, each corresponding to a sample of points, and it appears that in each case these curves are approximating a limiting curve.  In the case of the Gaussian distribution we would expect the eigenfunctions to be Hermite polynomials and in the exponential distribution case we would expect to see Legendre polynomials, however the  numerical values in the plots differ considerably from these predictions, presumably because the the fast decay of Gaussian and exponential densities make the  kernel averaging inaccurate due to the existence of infinite regions with few or no sample points.

Our reason for expecting to see these specific polynomials is based on the following rationale.  If points are sampled from a $C^1$ density $g(x)$ then the resulting averaging Laplacian has the following form at $x=0$  if $f$ is sufficiently smooth simply by averaging the Taylor series:
 \begin{equation}\label{eq-aver-Lp}
 	\frac{\int\limits_{-\epsilon}^{\epsilon}f(x)g(x)dx }{\int\limits_{-\epsilon}^{\epsilon} g(x)dx} -f(0)
 	=\frac{\epsilon^2}6 \Bigl(  f''(0) +\frac{2g'(0)}{g(0)} f'(0) \Bigr) +o(\epsilon^2).
 \end{equation}
Normalizing by $\epsilon^{-2}$ we see this describes a second-order Laplacian with a drift term.  Using this idea, we formulate the following conjecture in the case of the real line $\R$.  For sampling on the half-line $\R_+$ (as is the case for the exponential distribution) one would need to provide suitable boundary conditions.
 
 \begin{conj}\label{conj-BN-GK}
Eigenmaps of  the random averaging Laplacians involving points sampled from a smooth distribution $g(x)$ give numerically stable locally uniform  approximations to the corresponding eigenmap of the Laplacian 
 	\begin{equation}\label{eq-Delta-g}
 		\mathcal{L}_{(g)}f:=-2\frac{g'}{  g}f'- f'' .
 	\end{equation} 
A related multidimensional case is presented in Appendix~\ref{ARTsection}. 
\end{conj}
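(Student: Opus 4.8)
The plan is to prove the conjecture by the same two-step decomposition used on the interval and then to upgrade the resulting operator convergence to spectral (eigenmap) convergence. First I would isolate the \emph{bias} term: for $f$ of class $C^3$ (or with $f''$ H\"older of exponent $\alpha$) and points drawn from the density $g$, one expands the continuous $g$-weighted averaging Laplacian
\[
\mathcal{L}_{\epsilon,g}f(x)=\frac{\int_{\tilde B(x,\epsilon)}\bigl(f(y)-f(x)\bigr)g(y)\,dy}{\int_{\tilde B(x,\epsilon)}g(y)\,dy}
\]
in a joint Taylor series in $f$ and $g$. The odd term cancels by symmetry and the computation leading to~\eqref{eq-aver-Lp} gives $\epsilon^{-2}\mathcal{L}_{\epsilon,g}f(x)=-\tfrac16\mathcal{L}_{(g)}f(x)+O(\epsilon^{\alpha})$, locally uniformly on sets where $g$ is bounded below. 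This is the exact analogue of Theorem~\ref{thm:Lepstof''}, the drift $2g'/g$ being produced by the first-order term of $g$; it also identifies $\mathcal{L}_{(g)}=-\tfrac1{g^2}\tfrac{d}{dx}\bigl(g^2\tfrac{d}{dx}\bigr)$ as the weighted Laplacian self-adjoint on $L^2(g^2\,dx)$, which is the correct Hilbert space for the rest of the argument.

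Second I would control the sampling fluctuation exactly as on the interval. At each fixed $x$ the random graph Laplacian $\mathcal{L}_{\epsilon,n}(\omega)f(x)$ is a ratio of two empirical averages, $\tfrac1n\sum_j\1_{B(x,\epsilon)}(x_j)\bigl(f(x_j)-f(x)\bigr)$ over $\tfrac1n\#E_\epsilon(x)$, whose numerator and denominator concentrate on $\int_{B(x,\epsilon)}\bigl(f(y)-f(x)\bigr)g(y)\,dy$ and $\int_{B(x,\epsilon)}g(y)\,dy$ respectively. The delta method together with the central limit theorem of Theorem~\ref{thm-monte-carlo} then yields Gaussian fluctuations of $\epsilon^{-2}\mathcal{L}_{\epsilon,n}(\omega)f(x)$ of order $g(x)^{-1/2}(n\epsilon^3)^{-1/2}$ about $\epsilon^{-2}\mathcal{L}_{\epsilon,g}f(x)$, the dominant contribution coming from the numerator. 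Feeding this into the triangle inequality~\eqref{eqn:triangleest} against the bias $O(\epsilon^\alpha)$ of the first step and optimizing $\epsilon=\epsilon(n)$ reproduces the interval rate $n^{-\alpha/(3+2\alpha)}$ of Theorem~\ref{thm:randomlaponinterval} and gives locally uniform convergence of $\epsilon(n)^{-2}\mathcal{L}_{\epsilon(n),n}(\omega)f$ to $-\tfrac16\mathcal{L}_{(g)}f$ on compacta where $g$ is bounded below.

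The genuinely new work is to promote this operator convergence to convergence of the eigenmap. Here I would pass to the Dirichlet form picture, in which the limit is $\mathcal{E}(f)=\int|f'|^2 g^2\,dx$ on $L^2(g^2\,dx)$ and the approximants are the discrete energies of the random graphs carrying their empirical degree measures. The plan is to establish Mosco (or $\Gamma$-) convergence of the discrete forms to $\mathcal{E}$ on these varying Hilbert spaces, in the sense of the spectral convergence theory of Kuwae--Shioya, using the first two steps to identify the limiting form and the empirical measure convergence $\tfrac1n\sum_j\delta_{x_j}\to g\,dx$ to match the reference measures. Provided $\mathcal{L}_{(g)}$ has compact resolvent and simple, well-separated low eigenvalues---as in the Ornstein--Uhlenbeck/Hermite case $g=e^{-x^2/2}$, where $\mathcal{L}_{(g)}f=-f''+2xf'$ and $g^2=e^{-x^2}$ is the Hermite weight---form convergence forces convergence of the low eigenvalues and of the associated spectral projections, hence of the eigenmap. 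A Davis--Kahan estimate then turns the spectral gap into the asserted numerical stability, the conditioning of each eigenfunction being governed by its distance to the neighboring eigenvalues.

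The main obstacle is the non-compact domain together with the unbounded drift and the sparse tails, which is precisely the effect flagged in the text as the source of inaccuracy for fast-decaying $g$: the bias estimate degrades where $g'/g$ is large (linearly so for a Gaussian), while the fluctuation carries the factor $g(x)^{-1/2}$ and thus blows up exactly where samples are scarce. I expect the resolution to require a confinement (Lyapunov) argument showing that the low eigenfunctions of $\mathcal{L}_{(g)}$ decay rapidly, so that truncation to a large interval $[-R,R]$ costs only an exponentially small fraction of the $L^2(g^2\,dx)$ mass, followed by a quantitative balance of $R=R(n)$ against the growth of the drift and the vanishing sample density near $\pm R$. Making this balance explicit---and thereby converting ``locally uniform'' and ``numerically stable'' into honest rates---is where the real difficulty lies. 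The numerics suggesting that good eigenmaps persist for $\epsilon(n)$ far larger than the $n^{-1/(3+2\alpha)}$ of the second step (compare Conjecture~\ref{conj:betterconvergence}) hint that the spectral statement is more robust than pointwise operator convergence, so one should seek the eigenmap bound directly from form convergence rather than by integrating the pointwise estimate. Finally, on the half-line exponential case one must supply the boundary condition flagged after~\eqref{eq-aver-Lp}, most naturally the Neumann-type condition under which the average-difference limit is self-adjoint.
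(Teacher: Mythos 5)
The statement you are addressing is explicitly a \emph{conjecture} in the paper: the authors offer no proof, only the heuristic Taylor computation in~\eqref{eq-aver-Lp} identifying the drift term $2g'/g$, and the suggestion after Conjecture~\ref{conj-BGLK} that a resistance-form/Green-operator discretization might eventually settle it. There is therefore no paper proof to compare yours against, and your submission is, by its own account, a research plan rather than a proof. Its first two steps (the bias expansion reproducing~\eqref{eq-aver-Lp} and the delta-method CLT for the ratio of empirical averages) are sound and closely track what the paper actually establishes on $[-1,1]$ in Theorems~\ref{thm:Lepstof''} and~\ref{thm:randomlaponinterval}; your observation that $\mathcal{L}_{(g)}=-g^{-2}\frac{d}{dx}\bigl(g^{2}\frac{d}{dx}\bigr)$ is self-adjoint on $L^2(g^2\,dx)$ is also a genuinely useful correction, since the form written in~\eqref{eq-weak-Lp} with weight $g$ generates a drift of $g'/g$, not the $2g'/g$ of~\eqref{eq-Delta-g}.

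The genuine gaps are in steps three and four, and they are exactly where the conjecture lives. Passing from pointwise-in-$x$, pointwise-in-$f$ operator convergence with Gaussian fluctuations to Mosco convergence of the random discrete energies on varying Hilbert spaces requires concentration that is uniform over the unit ball of the form domain, not a CLT at a fixed $x$ and fixed $f$; the paper itself stresses (after Theorem~\ref{thm-monte-carlo}) that uniformity over $x$ is ``considerably more difficult,'' and you do not indicate how to obtain it. More seriously, the confinement argument you defer is not a technicality: the paper reports that the numerics for Gaussian and exponential sampling visibly \emph{fail} to produce Hermite and Legendre polynomials, precisely because of the sparse tails, so the truncation-versus-drift balance you postpone is the substantive content of the conjecture, and asserting that Hermite eigenfunction decay will make it work is a hope, not an argument. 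Finally, in the exponential case the half-line boundary condition you mention in passing changes which self-adjoint realization of $\mathcal{L}_{(g)}$ the graph Laplacians select (compare the Neumann-versus-Dirichlet discussion in Section~\ref{ssec:IntervalLaplacians}), and without identifying it the eigenmap limit is not even well defined. Your plan is a reasonable roadmap, consistent with the authors' own suggested strategy, but none of the steps that would make this conjecture a theorem have been carried out.
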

It is worth noting that a linear change of coordinates on $\mathbb{R}$ changes $\mathcal{L}_{(g)}$ by a multiplicative factor.  More generally, on a Riemannian manifold as in Appendix~\ref{ARTsection}, the limits of averaging Laplacians are equivalent up to multiplication by a constant and rescaling of the metrics.

An integration by parts reveals that  the Laplacian in~\eqref{eq-Delta-g} is associated to a classical Dirichlet form $\mathcal{E}_g$ as discussed in~\cite{BGL,FOT} and references therein:
\begin{equation}\label{eq-weak-Lp}
 \mathcal{E}_g(f)=\int_{\R } (f'(x))^2 g(x)dx=-\int_{\R } f(x)\left( \frac{g'(x)}{g(x)}f'(x) +f''(x) \right) g(x)dx.
 \end{equation}
Since this Dirichlet form is a resistance form in the sense of Kigami~\cite{Kigamibook}, the argument suggested in the discussion following Conjecture~\ref{conj:betterconvergence} ought also to be applicable here.  In particular, this suggests approximating the Laplacian~\eqref{eq-Delta-g}by  discretization of the form~\eqref{eq-weak-Lp}.  We again formulate a conjecture in the case of $\mathbb{R}$ and note that boundary conditions would be required for an analogous conjecture on a half-line or interval.

 \begin{conj}\label{conj-BGLK}
Numerically accurate  locally uniform  approximations to the Laplacian eigenmap can be obtained by  discretization of the left and right hand sides in the weak  definition of the Dirichlet form Laplacian \eqref{eq-weak-Lp} using the theory of resistance forms~\cite{Kigamibook}. 
 \end{conj}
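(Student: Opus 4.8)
The plan is to follow the Green-operator strategy sketched after Conjecture~\ref{conj:betterconvergence}, exploiting the fact that the form $\mathcal{E}_g$ in \eqref{eq-weak-Lp} is a one-dimensional resistance form whose resistance metric is the \emph{additive} quantity $R(x,y)=\int_x^y dt/g(t)$. First I would pass to a setting with a spectral gap, replacing $\mathcal{L}_{(g)}$ by its restriction to the orthogonal complement of the constants in $L^2(g\,dx)$ (for a confining density such as a Gaussian this operator has compact resolvent). By Kigami's resistance-form theory, in particular the Green-kernel formula of \cite[Theorem~4.3]{Kigami2012}, the Green operator $\mathcal{L}_{(g)}^{-1}$ is then an integral operator whose kernel $G(x,y)$ is an elementary expression in $R$ and the reference measure $g\,dx$; schematically $G$ is built from differences of resistances of the form $R(x,\cdot)+R(y,\cdot)-R(x,y)$ adapted to the chosen behavior at infinity.

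Next I would identify the discretization of both sides of \eqref{eq-weak-Lp} with a \emph{discrete resistance form}. Given sample points $x_1<\dots<x_n$ --- deterministic $g$-quantiles in a first pass, and i.i.d.\ samples afterward --- I would build the nearest-neighbor path network with edge conductances $c_j=R(x_j,x_{j+1})^{-1}$ and nodal masses $m_j=\int_{I_j}g\,dx$ over the cells $I_j$ between consecutive points. Discretizing the left side of \eqref{eq-weak-Lp} gives the discrete energy $\sum_j c_j(u_{j+1}-u_j)^2$, and discretizing the right side against $\sum_j m_j\delta_{x_j}$ identifies the associated operator $\mathcal{L}_{(g),n}$. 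The decisive simplification is that in one dimension resistances add along a path, so the discrete effective resistance between any two sample points equals the continuum $R$ \emph{exactly}; consequently the discrete Green kernel $G_n$ agrees with $G$ at every pair of sample points, and off the grid $G_n$ is merely the resistance-harmonic interpolant of $G$.

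The heart of the argument is then to bound $\|\mathcal{L}_{(g)}^{-1}-\mathcal{L}_{(g),n}^{-1}\|$ in a norm strong enough to control the low spectrum. Because the two kernels coincide at the sample points, the discrepancy between the Green operators comes only from (i) the resistance mesh $\max_j R(x_j,x_{j+1})$, which governs the off-grid interpolation error, and (ii) the gap between the continuum measure $g\,dx$ and its discretization $\sum_j m_j\delta_{x_j}$ in the reference inner product. In the deterministic quantile case both quantities are $O(1/n)$ on any compact set and a Hilbert--Schmidt estimate follows at once. \textbf{The main obstacle is the random, non-compact case}: one must show that i.i.d.\ samples produce a uniformly small resistance mesh and a uniformly small measure discrepancy on each compact $[-R,R]$ with high probability, which calls for concentration estimates for spacings and empirical measures, together with care at the two cells where the compact window meets the tails. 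The fast decay of $g$ makes $R$ blow up and the sampling become sparse in the tails --- exactly the breakdown the authors observe numerically --- so only \emph{locally uniform} control can survive, and the tail contributions must be absorbed through the spectral gap.

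Finally I would pass from resolvent convergence to the eigenmap. Operator-norm (indeed Hilbert--Schmidt) convergence of the compact self-adjoint Green operators yields convergence of the reciprocal eigenvalues and, away from spectral-gap crossings, of the corresponding spectral projectors by a Davis--Kahan argument, hence $L^2(g\,dx)$ convergence of each low-frequency eigenfunction. Since in one dimension the Green operator smooths $L^2$ into resistance-H\"older functions, this upgrades to locally uniform convergence of the eigenfunctions, and therefore of the finitely many coordinates of the eigenmap $\Phi_n\to\Phi$. I expect the resistance-additivity to be precisely what lets this scheme outperform the Taylor- and metric-ball estimates of Section~\ref{sec:graphtoaveraging}, matching the improved convergence envisioned in Conjecture~\ref{conj:betterconvergence}; established in the stationary case it would also furnish an alternative route to Conjecture~\ref{conj-BN-GK}.
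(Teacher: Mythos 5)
This statement is one of the paper's open conjectures: no proof is given, only the heuristic remark (after Conjecture~\ref{conj:betterconvergence} and repeated just before Conjecture~\ref{conj-BGLK}) that the Green-operator/resistance-form route via~\cite{Kigamibook,Kigami2012} \emph{ought} to work. Your proposal is a faithful and substantially more detailed elaboration of exactly that suggested route, and the observation that one-dimensional resistances add along a path --- so that the discrete effective resistance between sample points reproduces the continuum $R(x,y)=\int_x^y dt/g(t)$ exactly --- is the right structural reason to expect this scheme to beat the Taylor/metric-ball estimates. But as written it is a research plan with two genuine gaps, not a proof.

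First, the claim that ``the discrete Green kernel $G_n$ agrees with $G$ at every pair of sample points'' is only correct for the two-point-boundary (Dirichlet-type) Green function, which is a function of resistances alone. The operator you actually set up --- $\mathcal{L}_{(g)}$ restricted to the orthogonal complement of constants in $L^2(g\,dx)$ --- has a Green kernel of the form $R$-expressions \emph{integrated against the reference measure} (this is visible in \cite[Theorem~4.3]{Kigami2012}), so replacing $g\,dx$ by $\sum_j m_j\delta_{x_j}$ already destroys exact agreement on the grid; the measure discrepancy you list as error source (ii) contaminates the on-grid values, not just the interpolation. The argument survives, but the ``decisive simplification'' is weaker than stated and the quantitative estimate has to be redone with this extra term. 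Second, the step you yourself flag as the main obstacle --- uniform control of the resistance mesh and the empirical measure for i.i.d.\ samples from a rapidly decaying density, and absorption of the tail cells where $R$ blows up --- is precisely where the numerics in Figure~\ref{gauss-exp-fig} show the method degrading, and ``absorbed through the spectral gap'' is not yet an argument: for a Gaussian the resistance to infinity is infinite, the space is unbounded in the resistance metric, and the compact-space Green-kernel formula and the $L^2\to$ H\"older smoothing you invoke both require justification in that setting. Until those two points are supplied, this should be regarded as a promising strategy consistent with the authors' own sketch rather than a resolution of the conjecture.
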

 
As noted at the beginning of this section, we believe that in the case of Gaussian and exponential distributions, we believe that Conjectures\ref{conj-BN-GK} and ~\ref{conj-BGLK} should lead to approximations of the classical Hermite and Legendre polynomials.

 \section{Eigenmaps on fractal spaces}\label{fractalsection}
 
\subsection{Convergence of graph Laplacians with well-spaced points to averaging Laplacians on self-similar sets}\label{ssec:fractalswithwellspacedpts}

Suppose $(Y,d)$ is a complete metric space and let $X$ be the unique compact invariant self-similar set associated to a contractive iterated function system $\{F_1,\dotsc, F_N\}$ of strictly contractive homeomorphisms that satisfy the open set condition~\cite{Hutchinson81}.  Writing $F_w=F_{w_1}\circ\dotsm F_{w_n}$ for a word $w_1\dotsm w_m$ of length $|w|=m$ with letters from $\{1,\dotsc, N\}$ we have $X=\cup_{m\geq0}\cup_{\{|w|=m\}} F_w(X)$.  Evidently  
\begin{equation*}
\max_{|w|=m}\diam(F_w(X))\to0 \text{ as } m\to\infty.
\end{equation*}
 The open set condition ensures we may endow $X$ with the uniform self-similar probability measure in which $\mu(F_w(X))=N^{-m}$ and $\mu(F_w(X)\cap F_{w'}(X))=0$ if $|w|=|w'|$ and $w\neq w'$, see~\cite{Hutchinson81}.  Evidently this construction provides $(X,d,\mu)$ with measure-theoretic equipartitions of arbitrarily small diameter as defined immediately preceding Corollary~\ref{cor:graphlapapproxavlapifmeasurethpartitions}, because cells $F_w(X)$ with $|w|=m$ form a measure-theoretic equipartition and have diameter converging to zero with $m$.  Applying  Corollary~\ref{cor:graphlapapproxavlapifmeasurethpartitions} we obtain the following.

\begin{pr}\label{prop:selfsimilarwellspacedpts}
If $(X,d,\mu)$ is a self-similar set with diameter at least $1$ and uniform probability measure as described above, then for $\epsilon>0$ and $n\in\mathbb{N}$ there is an averaging Laplacian $\mathcal{L}_\epsilon$ over cells in the sense of Definition~\ref{def:avlap} and graph Laplacian operators $\mathcal{L}_{\epsilon,n}$ satisfying
\begin{equation*}
\bigl|\mathcal{L}_\epsilon f(x) - \mathcal{L}_{\epsilon,n}f(x) \bigr|
\leq \sup_x \sup_{y\in B(x,\frac\epsilon n)} |f(x)-f(y)|.
\end{equation*}
\end{pr}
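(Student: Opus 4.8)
The plan is to apply Corollary~\ref{cor:graphlapapproxavlapifmeasurethpartitions} directly, so that the whole task reduces to checking that $(X,d,\mu)$ meets that corollary's hypotheses: connectedness, diameter at least $1$, and the existence of measure-theoretic equipartitions of arbitrarily small diameter. The diameter bound is assumed in the statement, and the equipartition property has essentially already been established in the discussion preceding the proposition. Once these are in hand, the corollary produces, following Definition~\ref{def:avlap}, an averaging Laplacian $\mathcal{L}_\epsilon$ whose averaging set $N(x)$ is a union of partition sets --- here the cells $F_w(X)$, which is what ``over cells'' means in the statement --- together with a matching graph Laplacian $\mathcal{L}_{\epsilon,n}$, and it delivers exactly the claimed oscillation bound.

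The first step is to record the equipartition verification precisely. For each level $m$ there are $N^m$ cells $F_w(X)$ with $|w|=m$; each has measure $\mu(F_w(X)) = N^{-m}$ for the uniform self-similar measure, and $\mu(F_w(X) \cap F_{w'}(X)) = 0$ whenever $|w|=|w'|=m$ and $w \ne w'$. Hence the level-$m$ cells form a measure-theoretic equipartition into pieces of equal measure with pairwise null overlaps. Since $\max_{|w|=m}\diam(F_w(X)) \to 0$ as $m \to \infty$, taking $m$ large yields equipartitions of arbitrarily small diameter, which is precisely the remaining hypothesis.

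The step I expect to be the genuine obstacle, or at least the point requiring care, is \emph{connectedness}, which the corollary uses essentially: its proof locates $n$ distinct cells inside $B(x,\epsilon)$ by producing points $y_t$ at every distance $t \le \epsilon$, and that argument relies on $X$ being connected (otherwise $B(x,t)$ together with the complement of its closure would disconnect $X$). A contractive iterated function system satisfying only the open set condition can have a disconnected attractor --- the middle-thirds Cantor set being the standard example --- so without connectedness a ball might contain fewer than $n$ cells and the conclusion could fail. The argument therefore applies cleanly to \emph{connected} self-similar sets, which cover all the examples of interest here, such as the unit interval, the unit square, and the Sierpinski gasket of Section~\ref{ssec:avlaponSG}; for those the proposition is immediate from Corollary~\ref{cor:graphlapapproxavlapifmeasurethpartitions}. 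If $X$ were disconnected one would instead impose a direct hypothesis guaranteeing that each $B(x,\epsilon)$ contains at least $n$ cells of the requisite diameter, after which the same selection-and-averaging scheme behind Theorem~\ref{thm:discretelapbound} goes through unchanged.
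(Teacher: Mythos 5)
Your proposal follows exactly the paper's route: the level-$m$ cells $F_w(X)$ form measure-theoretic equipartitions of arbitrarily small diameter (equal measure $N^{-m}$, null pairwise overlaps, diameters tending to $0$), and Corollary~\ref{cor:graphlapapproxavlapifmeasurethpartitions} then delivers the stated oscillation bound; this is precisely the one-line argument the paper gives immediately before the proposition. Your further observation that connectedness is an implicit hypothesis --- needed in the corollary's proof to locate $n$ distinct cells inside $B(x,\epsilon)$, and genuinely failing for disconnected attractors such as the middle-thirds Cantor set --- is correct and identifies a hypothesis the paper leaves unstated, though it holds for all the examples the paper has in mind.
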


Typical examples of sets to which the preceding applies include the classical Sierpinski gasket and carpet, Menger sponges, and the Koch curve.  In particular, we find that taking $n=N^m$ graph vertices with one vertex in each $m$-scale cell will provide $\mathcal{L}_{\epsilon,n}f\to\mathcal{L}_\epsilon f$ pointwise.

\subsection{Random sampling from a fractal, $\mathcal L_{\epsilon,n}\xrightarrow[n\to\infty]{}\mathcal L_{\epsilon}$ }  

With $(X,d,\mu)$ as in the previous section, take points $\{x_1,\dotsc,x_n\}$ independently sampled from $X$ according to $\mu$. We define a random graph Laplacian in a similar manner to Definition~\ref{def:graphlaplacians} but considering cells of scale comparable to $\epsilon$ rather than balls.  For this purpose it is convenient to assume that all cells of scale $m$ have the same diameter $r^m$, where $r\in(0,1)$ is fixed, though more general circumstances can be analyzed.  For $\epsilon>0$ take $m$ so $r^{m}\leq\epsilon\leq r^{m-1}$ and let $E_m(x)$ be the $m$-cell containing $x$, then define the random graph Laplacian
\begin{equation*}
     \mathcal{L}_{\epsilon,n}(\omega) f(x)= \frac{1}{\#\{x_j\in E_m(x)\}} \sum_{x_j\in E_m(x)} (f(x_j)-f(x)).
\end{equation*}
where, as usual, $\omega\in\Omega$ describes the random choice of sample points.  Since $\frac1n\#\{x_j\in E_m(x)\}\to \mu(E_m(x))=N^{-m}$ this is close to  $N^{-m}L_{\epsilon,n}$ where 
\begin{equation*}
     L_n(\omega)f(x)= \frac{1}{n}\sum_{x_j\in E_m(x)} (f(x_j)-f(x))
\end{equation*}
When analyzing the corresponding object on an interval in $\mathbb{R}$ in Section~\ref{ssec:iidlaponinterval} we had access to the central limit theorem to determine the rate of convergence of $L_n(\omega)f(x)$ to the averaging Laplacian $\mathcal{L}_\epsilon f$ over $m$ cells.  For a general fractal we do not have such a result, but it is still possible to determine that the (random) eigenvalues of $L_n(\omega)f$ converge to the (non-random) eigenvalues of $\mathcal{L}_\epsilon f$ using results from~\cite{koltchinskii2000random}

More precisely, the operator $\mathcal{L}_\epsilon+I$ simply averages a function over $n$-cells. Since all cells have the same measure this can be  viewed as an integral operator $f\mapsto \int h(x,y)f(y)d\mu$ where $h(x,y)=N^{-m}\mathbf{1}_{E_m(x)}(y)$ is a symmetric kernel.  This is a Hilbert-Schmidt operator, so by~\cite[Theorem~3.1]{koltchinskii2000random} we have the following.

 \begin{thm}
For fixed $\epsilon>0$ and $n\to\infty$ the random eigenvalues  of $L_{\epsilon,n}(\omega)$ converge to the non-random eigenvalues of $\mathcal L_{\epsilon}$ almost surely as $n\to\infty$.
\end{thm}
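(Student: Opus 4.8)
The plan is to recognize that, at the fixed scale $m$ with $r^m\le\epsilon\le r^{m-1}$, the operator $\mathcal L_\epsilon$ has a very rigid structure which places it squarely inside the Koltchinskii--Gin\'e random-matrix framework. The $m$-cells $\{F_w(X):|w|=m\}$ partition $X$ into exactly $N^m$ pieces, each of measure $N^{-m}$, and $P:=\mathcal L_\epsilon+I$ is the conditional expectation onto the finite $\sigma$-algebra they generate, i.e. the integral operator with the bounded symmetric kernel $h$ of the preceding paragraph, which is constant on pairs $(x,y)$ lying in a common $m$-cell and zero otherwise. Since $\mu$ is a probability measure and $h$ is bounded, $h\in L^2(\mu\otimes\mu)$ and $P$ is Hilbert--Schmidt; in fact $P$ is the orthogonal projection onto the $N^m$-dimensional space of functions constant on $m$-cells, so $\operatorname{spec}(P)=\{1,0\}$ with $1$ of multiplicity $N^m$, whence $\operatorname{spec}(\mathcal L_\epsilon)=\{0,-1\}$ with $0$ of multiplicity $N^m$. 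In particular $\mathcal L_\epsilon$ is \emph{not} compact: it is $-I$ plus a finite-rank operator, so the correct statement is convergence of the finitely many eigenvalues differing from $-1$, equivalently of the spectrum of the Hilbert--Schmidt operator $P$.

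With this identification the first step is to apply \cite[Theorem~3.1]{koltchinskii2000random} to the symmetric kernel $h$, obtaining that the empirical matrices $H_n=[\tfrac1n h(x_i,x_j)]_{i\neq j}$ have spectra converging almost surely to that of $P$ in the appropriate rearrangement metric. Here the cited theorem is especially transparent: $h(x_i,x_j)$ is a fixed constant exactly when $x_i,x_j$ share an $m$-cell and is zero otherwise, so $H_n$ is, up to the deleted diagonal, block diagonal with one scaled all-ones block per occupied cell. One may therefore either quote the theorem or verify the conclusion directly by computing the block (clique) spectra.

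The second, and main, step is to transfer this to the graph Laplacian of the statement. Writing its matrix on the sample $\{x_1,\dots,x_n\}$ one finds $L_{\epsilon,n}(\omega)=\tfrac1n(M-\operatorname{diag}(c))$, where $M_{ij}=\mathbf 1_{E_m(x_i)}(x_j)$ and $c_i=\sum_j M_{ij}$ counts the sample points in the cell of $x_i$. Comparing off-diagonal entries shows this equals $N^{-m}H_n$ plus the diagonal correction $-\tfrac1n\operatorname{diag}(c_i-1)$ produced by the $-f(x)$ term. The key point is that the cell counts satisfy $c_i/n\to N^{-m}$ almost surely, uniformly in $i$, because there are only finitely many ($N^m$) cells at scale $m$, so the strong law applies simultaneously to all of them and guarantees every cell is eventually occupied. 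Hence the diagonal correction converges almost surely to $-N^{-m}I$ in operator norm, giving $L_{\epsilon,n}(\omega)=N^{-m}H_n-N^{-m}I+o(1)$. Feeding the almost-sure spectral convergence $H_n\to P$ through Weyl's (or the Hoffman--Wielandt) perturbation inequality for self-adjoint matrices then yields convergence of $\operatorname{spec}(L_{\epsilon,n}(\omega))$ to $N^{-m}\operatorname{spec}(\mathcal L_\epsilon)$ almost surely; absorbing the deterministic factor $N^{-m}$ in the normalization (equivalently, passing to the degree-normalized $\mathcal L_{\epsilon,n}$, whose row normalization $\#\{x_j\in E_m(x)\}$ is controlled by the same uniform law of large numbers) gives convergence to $\operatorname{spec}(\mathcal L_\epsilon)$ itself.

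I expect the main obstacle to be precisely this reconciliation of normalizations, since the cited theorem controls the constant-$\tfrac1n$-normalized, diagonal-free matrix $H_n$, whereas the graph Laplacian carries the random, row-dependent degree on its diagonal (and, for the variant $\mathcal L_{\epsilon,n}$, in its row normalization as well). Showing that these random corrections collapse to a scalar multiple of the identity is what makes the argument go through, and it rests entirely on the uniform law of large numbers over the finite family of equal-measure cells together with the finite-rank, piecewise-constant nature of $P$.
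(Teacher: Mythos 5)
Your proposal is correct and follows essentially the same route as the paper: identify $\mathcal L_\epsilon+I$ as the Hilbert--Schmidt integral operator (the projection onto functions constant on $m$-cells) and invoke Theorem~3.1 of Koltchinskii--Gin\'e for almost sure convergence of the empirical spectra. You go further than the paper in two useful respects --- you carry out the reconciliation between the diagonal-deleted, $\tfrac1n$-normalized empirical matrix $H_n$ and the actual graph Laplacian via the uniform law of large numbers over the finitely many equal-measure cells, and you correctly note that $\mathcal L_\epsilon$ is $-I$ plus finite rank, so the statement should be read as convergence of the finitely many eigenvalues away from $-1$ --- but these are refinements of, not departures from, the paper's argument.
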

In the future work it would be interesting to investigate the rate of convergence of eigenvalues and to use the techniques of~\cite{Koltchinskii,KoltchinskiiLounici} to explore the convergence of eigenfunctions and, more generally, eigenprojectors.  The approach of~\cite{burago2019spectral} might also be applicable in this setting.

\subsection{Convergence of averaging Laplacians $\mathcal L_{\epsilon}\xrightarrow[\epsilon\to0]{}\mathcal L$ on the Sierpinski gasket} \label{ssec:avlaponSG}
 
For our purposes we take the  Sierpinski gasket SG to be the unique non-empty compact set in $\mathbb{R}^2$ which is fixed by the contractions $F_j=\frac12(x-p_j)+p_j$, $j=1,2,3$ with $p_1=(0,0)$, $p_2=(1,0)$, $p_3=\bigl(\frac12,\frac{\sqrt{3}}2\bigr)$ the vertices of an equilateral triangle. This is a special case of the situation described in Section~\ref{ssec:fractalswithwellspacedpts} and we use the notation established there.
 We endow SG with the self-similar measure $\mu$ that satisfies $\mu=\frac13\sum_1^3 \mu\circ F_j^{-1}$.

The Sierpinski gasket can be provided with a natural Laplacian $\mathcal{L}$ (and an associated heat semigroup and Dirichlet form) using either probabilistic or functional analytic methods~\cite{BarlowPerkins,Kigamibook}.  This Laplacian is non-positive definite and has discrete spectrum; moreover, there is a rapid algorithm for computing eigenfunctions, so it is not difficult to determine the Laplacian eigenmap numerically to any desired accuracy, and one finds that the image of the gasket under the eigenmap to $\mathbb{R}^2$ is recognizably a gasket.  However we cannot use any results regarding the Laplacian on a manifold to understand this eigenmap because $\mathcal{L}$ is not a Laplace-Beltrami operator. 

The analysis associated to the Laplacian $\mathcal{L}$ provides for a type of Taylor approximation of a function $f$ for which $\mathcal{L}f$ is continuous, but we do not expect that the averaging Laplacian with respect to balls in the metric obtained from $\mathbb{R}^2$ can be rescaled to converge to $\mathcal{L}$ in the strong sense that occurs for the Laplace-Beltrami operator on a manifold.  A more detailed description of what we expect is in Conjecture~\ref{conj-no-conv3}.  The reason is that Euclidean balls are not the correct sets for the cancellation of the lower order terms to occur.

\begin{figure}[h] 
	\centering
	\includegraphics[width=.55\textwidth]{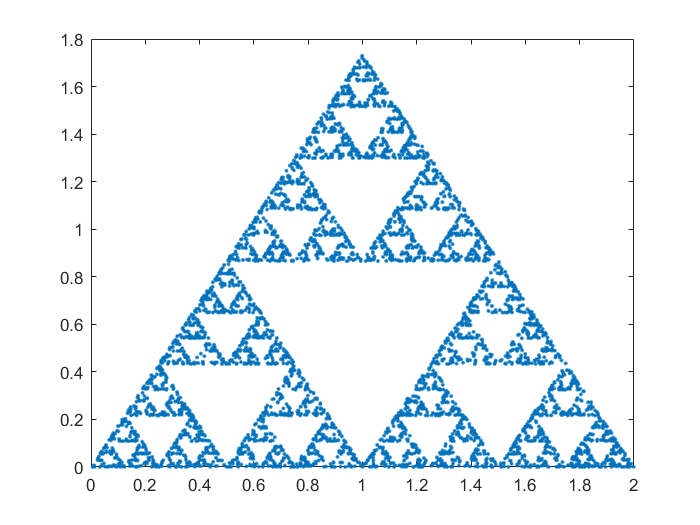}
	\includegraphics[width=.55\textwidth]{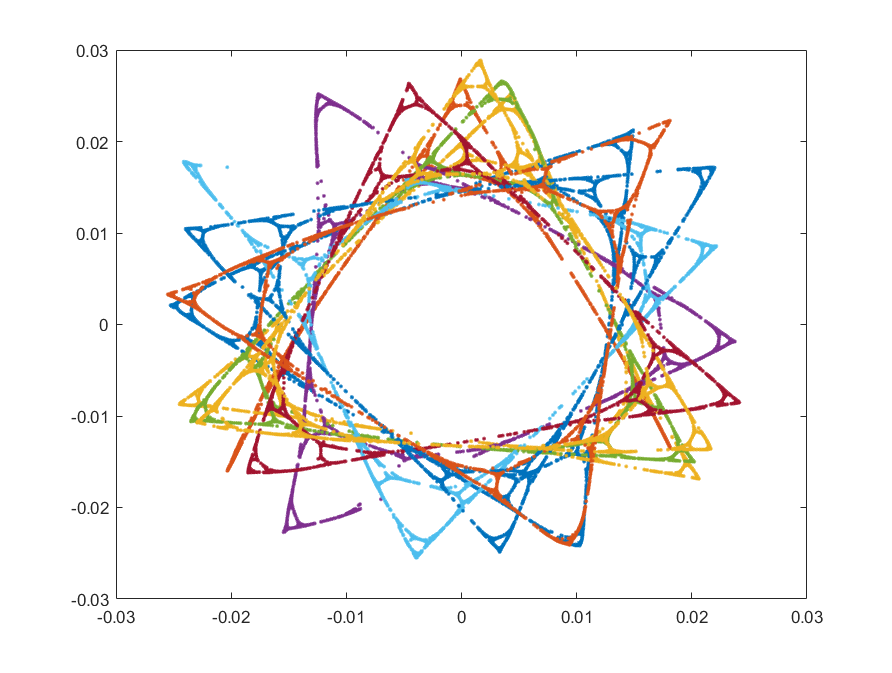}
	\includegraphics[width=.55\textwidth]{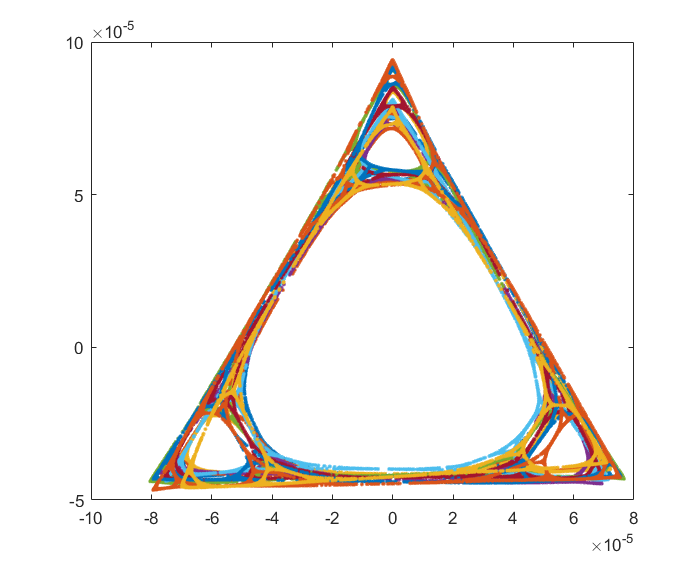}
	\caption{
		Top: the Sierpinski Gasket with 5000 points and  $m = 15$.  Middle: Sierpinski Eigenmaps plotted over each other without any extra rotation, supporting Conjecture~\ref{conj-conv-rot}. 
		Bottom: 
		Sierpinski Eigenmaps plotted over each other each, rotated through an extra linear transformation to align, supporting Conjectures~\ref{Sierpinski-conj} and \ref{conj-conv2}.  }  \label{fractal-fig}
\end{figure} 

It is known, however, that there is a way to obtain $\mathcal{L}$ using averaging.  One can define an averaging Laplacian with respect to cells $F_w(\text{SG})$, and this can be rescaled to converge to $\mathcal{L}$, see~\cite{strichartz2001laplacian,donglei2005laplacian,donglei2007laplacian}.  It is important to note the the rescaling is different than occurs in the Euclidean space: if $|w|=m$ the diameter of $F_w(\text{SG})$ in $\mathbb{R}^2$ is $2^{-m}$ so in the manifold case we would rescale the averaging Laplacian by the squared reciprocal $4^m$ to obtain the Laplacian in the limit as $m\to\infty$, but in the case of SG we rescale instead by $5^m$.  We do not explain this phenomonen here, but simply note that it indicates that $\mathcal{L}$ is distinctly different from a Euclidean Laplacian and one should not expect to transfer arguments from Euclidean space or Riemannian manifolds directly to this setting.

Since the averaging Laplacian over cells can be rescaled to converge to $\mathcal{L}$ it seems possible that one would still see this convergence for an averaging Laplacian $\mathcal{L}_\epsilon$ defined following Definition~\ref{def:avlap} but using a notion of distance that reflects the cell structure.   For the latter, we consider the following.

\begin{df}
 	The cellular semimetric on the Sierpinski gasket is defined by 
\begin{equation*}
d_{\text{cell}}(x,y):=\min\{2^{-m}: x \text{ and } y \text{ are in intersecting $m$-cells}\}.
\end{equation*}
 \end{df}
To see that the triangle inequality fails, making this a semi-metric, consider the distance between $p_1$ and two points $x,y$ with $x\in F_1(X)$, $y\in F_2(X)$ so $d_{\text{cell}}(p_1,x)=\frac12$, $d_{\text{cell}}(p_1,y)=1$.  By making $x$ and $y$ lie in $k$-cells that meet at $F_1(p_2)$ we can ensure $d_{\text{cell}}(x,y)=2^{-k}$ for arbitrarily large $k$, so $d_{\text{cell}}(p_1,y)\not\leq d_{\text{cell}}(p_1,x)+d_{\text{cell}}(x,y)$.  It is not hard to see that the Euclidean distance and cellular semimetric are related by $\frac12 d_{\text{cell}}(x,y)\leq|x-y|\leq 2d_{\text{cell}}(x,y)$.  We note that $d_{\text{cell}}$ is closely related to $\tilde{d}(x,y)=\min\{2^{-m}:\text{$x$ and $y$ are in the same $m$-cell\}}$ and, when restricted to intersection points of cells, to the Hamming distance.


We define an  averaging Laplacian in the sense of Definition~\ref{def:avlap} by
\begin{equation*}
\mathcal{L}_\epsilon f(x)=\frac1{\mu(\{y:d_{\text{cell}}(x,y)\leq\epsilon\})} \int_{\{y:d_{\text{cell}}(x,y)\leq\epsilon\}} \bigl( f(y)-f(x)\bigr)d\mu. 
\end{equation*}

The existing results on obtaining the Laplacian using rescaled cell averages suggest the following should be true.
\begin{conj}\label{Sierpinski-conj}
There is a constant $c$ so that if the averaging operator $\mathcal L_{\epsilon}$ on  SG is defined with respect to $d_{\text{cell}}$  then 
\begin{equation*}
\epsilon^{-\frac{\log 5}{\log2}} \mathcal L_{\epsilon}f\xrightarrow[\epsilon\to0]{} c\mathcal Lf
\end{equation*}
uniformly for any function $f$ in the H\"older domain of the Laplacian. Moreover, the eigenvalues and eigenfunctions of $\mathcal{L}_\epsilon$ converge to those of $\mathcal{L}$ as $\epsilon\to0$.  	
\end{conj}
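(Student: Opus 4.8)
The plan is to transfer the known convergence of renormalized cell averages to $\mathcal{L}$ from~\cite{strichartz2001laplacian,donglei2005laplacian,donglei2007laplacian} onto the $d_{\text{cell}}$-ball averages of Definition~\ref{def:avlap}. First I would unwind the geometry: for $\epsilon$ with $2^{-m}\leq\epsilon<2^{-(m-1)}$ the relation $d_{\text{cell}}(x,y)\leq\epsilon$ is equivalent to $d_{\text{cell}}(x,y)\leq 2^{-m}$, so the neighborhood $N_x=\{y:d_{\text{cell}}(x,y)\leq\epsilon\}$ is precisely the union of the $m$-cell(s) containing $x$ together with the finitely many $m$-cells meeting them. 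Thus $\mathcal{L}_\epsilon f(x)$ is an average of $f(y)-f(x)$ over a uniformly bounded number of adjacent $m$-cells of equal measure $3^{-m}$. Since $\epsilon\asymp 2^{-m}$, the prefactor obeys $\epsilon^{-\log 5/\log 2}\asymp 5^{m}$, which is exactly the renormalization appearing in the cell-average theory; this is why the exponent is $\log 5/\log 2$ rather than the Euclidean $2$.

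For the operator statement I would use the Taylor expansion on $\text{SG}$ available in the H\"older domain of $\mathcal{L}$. On each $m$-cell one writes $f=h+G(\mathcal{L}f)+\rho$, where $h$ is the harmonic function agreeing with $f$ at the cell vertices, $G(\mathcal{L}f)$ is the local Green-operator image of $\mathcal{L}f$ and scales like $5^{-m}\mathcal{L}f(x)$, and the remainder satisfies $|\rho(y)-\rho(x)|=o(5^{-m})$ uniformly when $\mathcal{L}f$ is H\"older, by the self-similar scaling of the Green kernel. Subtracting $f(x)$ and averaging over $N_x$ annihilates the constant, turns the $G(\mathcal{L}f)$ term into $c\,5^{-m}\mathcal{L}f(x)+o(5^{-m})$, and leaves the harmonic term; multiplication by $5^m\asymp\epsilon^{-\log5/\log2}$ then produces $c\mathcal{L}f(x)$, with $c$ read off as the cell average of the fundamental local profile.

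The hard part, and the reason this is stated as a conjecture, is the vanishing of the harmonic contribution. A cell average of a non-constant harmonic function differs from its value at an interior point by a quantity of first-order (harmonic) size, which the factor $5^m$ amplifies without bound; taking $f$ harmonic in the claim shows the conclusion can hold only if this first-order part integrates to $o(5^{-m})$ over $N_x$. I expect the only mechanism for this is the reflection symmetry of $\text{SG}$: at a junction point $x=F_w(q_i)$ the two adjoining $m$-cells are exchanged by a local reflection fixing $x$, under which the antisymmetric (gradient) part of any harmonic function changes sign, so a reflection-symmetric $N_x$ kills it and only a genuinely $\mathcal{L}$-driven term survives. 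The technical obstacle is therefore threefold: (i) arrange or verify enough symmetry of $N_x$ at the relevant points; (ii) control the residual first-order terms at generic, non-symmetric $x$, most plausibly by interpreting the convergence in $L^2(\mu)$ or against the energy pairing, where gradients no longer produce an uncancelled first-order term, or by first proving the pointwise statement on the dense set of cell vertices; and (iii) make the remainder estimates uniform from the H\"older hypothesis on $\mathcal{L}f$.

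For the spectral assertion I would avoid the pointwise cancellation difficulty by passing to the associated non-local Dirichlet (jump) forms $\mathcal{E}_\epsilon(f)=\tfrac12\iint_{d_{\text{cell}}(x,y)\leq\epsilon}\bigl(f(x)-f(y)\bigr)^2\,\tfrac{d\mu(x)\,d\mu(y)}{\mu(N_x)}$, for which $-\langle\mathcal{L}_\epsilon f,f\rangle=\mathcal{E}_\epsilon(f)$ up to the boundary correction coming from the variation of $\mu(N_x)$. Here the squared differences supply exactly the energy, so there is no first-order cancellation to arrange, and a direct computation shows $\epsilon^{-\log5/\log2}\mathcal{E}_\epsilon$ reproduces the renormalized level-$m$ Dirichlet sums weighted by $(5/3)^m$. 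I would then prove that $\epsilon^{-\log5/\log2}\mathcal{E}_\epsilon$ converges to $c\,\mathcal{E}$ in the Mosco sense, a Bourgain--Brezis--Mironescu type statement on $\text{SG}$ of the kind studied in connection with the work acknowledged from Hinz; Mosco convergence of uniformly bounded, densely defined non-local forms to the resistance form $\mathcal{E}$, whose generator has compact resolvent and discrete spectrum, yields convergence of the spectra and spectral projections, hence of eigenvalues and eigenfunctions. The remaining obstacle at this level is again to show that the boundary correction and the unrenormalized low-frequency modes of $\epsilon^{-\log5/\log2}\mathcal{L}_\epsilon$ do not contribute spurious limit spectrum, which should follow from the same symmetry and equal-measure features of the cellular neighborhoods.
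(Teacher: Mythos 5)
This statement is Conjecture~\ref{Sierpinski-conj}; the paper gives no proof of it, only the remark that it might be attacked either by the Dirichlet-form/Green-operator methods sketched after Conjecture~\ref{conj:betterconvergence} or by the method of averages of \cite{strichartz2001laplacian,donglei2005laplacian,donglei2007laplacian}. Your proposal elaborates exactly those two routes (a local decomposition $f=h+G(\mathcal{L}f)+\rho$ combined with the cell-average theory for the operator statement, and Mosco convergence of the associated non-local forms for the spectral statement), so it is consistent with the authors' stated intentions rather than comparable to an actual proof in the paper. Your geometric reduction is correct: for $2^{-m}\leq\epsilon<2^{-(m-1)}$ the set $\{y:d_{\text{cell}}(x,y)\leq\epsilon\}$ is the union of the $m$-cell(s) containing $x$ with the $m$-cells meeting them, all of measure $3^{-m}$, and $\epsilon^{-\log 5/\log 2}\asymp 5^m$ is the correct renormalization.

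The obstacle you flag in (i)--(iii) is real, and it is sharper than you state: it threatens the first assertion of the conjecture as literally written, not merely your proof of it. At a junction point $x=F_w(q_i)$ the vanishing of the sum of normal derivatives makes the average of a globally harmonic $h$ over the two adjoining $m$-cells exactly $h(x)$, which is the cancellation your symmetry argument seeks. But at a non-junction point $x$ interior to an $m$-cell $C$ with vertices $v_1,v_2,v_3$, the same vertex-average identities show that the average of $h$ over $N_x$ equals $\frac13\bigl(h(v_1)+h(v_2)+h(v_3)\bigr)$, while $h(x)$ is a different affine combination of the $h(v_i)$ (for instance $\frac15\bigl(2h(v_1)+2h(v_2)+h(v_3)\bigr)$ at the next-level midpoint vertex); the discrepancy is generically of order $(3/5)^m$, and $5^m(3/5)^m=3^m\to\infty$. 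Since harmonic functions lie in the H\"older domain with $\mathcal{L}h=0$, the renormalized average diverges on a dense set of points, and no remainder estimate on $\rho$ can repair this. Your fallback options in (ii) -- proving the pointwise statement only at junction points, or interpreting the convergence in $L^2$ or weakly against the energy pairing -- are therefore not optional refinements but necessary reformulations of the conjecture. The Mosco-convergence route for the eigenvalue/eigenfunction assertion is unaffected by this pointwise obstruction and is the most plausible part of your plan, modulo the non-self-adjointness of $\mathcal{L}_\epsilon$ coming from the variation of $\mu(N_x)$, which you correctly note must be controlled.
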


We believe it may be possible to prove this Conjecture~\ref{Sierpinski-conj} either by using the Dirichlet form methods discussed after Conjecture~\ref{conj:betterconvergence} or using results on obtaining the Laplacian by the method of averages~\cite{strichartz2001laplacian,donglei2005laplacian,donglei2007laplacian}.  The latter might also offer a way to investigate the rate of convergence.

Next we consider  points $\{x_1,\dotsc,x_n\}$ randomly sampled from SG according to $\mu$ and define a random graph Laplacian using the semimetric $d_{\text{cell}}$ by
\begin{equation*}
\mathcal{L}_{\epsilon,n}(\omega) f(x)
= \frac1{\#\{x_j:d_{\text{cell}}(x,x_j)\}} \sum_{\{x_j:d_{\text{cell}}(x,x_j)\}} f(x_j)-f(x_k).
\end{equation*}
In light of the analysis leading to Proposition~\ref{prop:selfsimilarwellspacedpts} we see that $\mathcal{L}_{\epsilon,n}$ should, in a suitable average sense, converge to $\mathcal{L}_\epsilon$ and therefore, after rescaling, to $\mathcal{L}$.  By analogy with Corollary~\ref{cor:randomlaponinterval} we expect a result as follows. Note that the normal, or Neumann, derivative (for which see~\cite[Definition~3.7.6]{Kigamibook}) at a point $p_j$ plays an analogous role in the theory to $f'(\pm1)$ does in  Theorem~\ref{thm:randomlaponinterval}.

\begin{conj}\label{conj:SGlapbyrandgraphs}
Suppose $f:\text{SG}\to\mathbb{R}$ has vanishing normal derivatives at $\{p_1,p_2,p_3\}$ and $\mathcal{L}f$ is H\"older continuous with exponent $\alpha$.  There is $\beta>1$ and $C>0$ so that if $\epsilon(n)\to0$ satisfies $n\epsilon(n)^\beta\to\infty$ then at each $x\in\text{SG}\setminus\{p_1,p_2,p_3\}$ we have
\begin{equation*}
\mathbb{E}\Bigl| \epsilon^{-\frac{\log 5}{\log2}} \mathcal L_{\epsilon}(\omega) f -  c\mathcal Lf\Bigr|
\leq C n^{-\gamma}
\end{equation*}
where $\gamma=\gamma(\alpha,\beta)>0$.
\end{conj}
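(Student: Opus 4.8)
The plan is to follow the same two-step strategy used for the interval in Theorem~\ref{thm:randomlaponinterval} and Corollary~\ref{cor:randomlaponinterval}, replacing the Taylor expansion by the harmonic (cellular) expansion on $\text{SG}$ and the role of $f'(\pm1)$ by the normal derivatives at $p_1,p_2,p_3$. Write $\beta_0=\tfrac{\log5}{\log2}=d+s$, where $d=\log_2 3$ is the Hausdorff dimension of $\text{SG}$ and $s=\log_2\tfrac53$, and split the quantity to be estimated by the triangle inequality
\begin{equation*}
\bigl|\epsilon^{-\beta_0}\mathcal{L}_{\epsilon,n}(\omega)f(x)-c\mathcal{L}f(x)\bigr|
\le \bigl|\epsilon^{-\beta_0}\mathcal{L}_\epsilon f(x)-c\mathcal{L}f(x)\bigr|
+\epsilon^{-\beta_0}\bigl|\mathcal{L}_\epsilon f(x)-\mathcal{L}_{\epsilon,n}(\omega)f(x)\bigr|.
\end{equation*}
The first (deterministic, bias) term is to be controlled by a quantitative form of Conjecture~\ref{Sierpinski-conj}: assuming $\mathcal{L}f$ is H\"older with exponent $\alpha$ one expects a bound $C\epsilon^{\theta}$ with $\theta=\theta(\alpha)>0$. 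The second (random) term is to be estimated in mean by a variance computation, exactly as $L_{\epsilon,n}$ was introduced on the interval.

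For the random term I would first pass to the auxiliary operator
\begin{equation*}
L_{\epsilon,n}(\omega)f(x)=\frac{1}{n\,\mu(N_\epsilon(x))}\sum_{j=1}^n \mathbf{1}_{N_\epsilon(x)}(x_j)\bigl(f(x_j)-f(x)\bigr),
\qquad N_\epsilon(x)=\{y:d_{\text{cell}}(x,y)\le\epsilon\},
\end{equation*}
whose summands are i.i.d.\ with mean exactly $\mathcal{L}_\epsilon f(x)$, so that the central limit theorem of Theorem~\ref{thm-monte-carlo} applies and, more simply for our purpose, $\mathbb{E}\bigl|L_{\epsilon,n}f(x)-\mathcal{L}_\epsilon f(x)\bigr|\le(\Var/n)^{1/2}$, where $\Var$ is the per-sample variance. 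The key input is the variance bound. For $\epsilon\asymp2^{-m}$ the set $N_\epsilon(x)$ is a bounded union of $m$-cells, so $\mu(N_\epsilon(x))\asymp 3^{-m}\asymp\epsilon^{d}$; and since $f$ lies in the domain of $\mathcal{L}$ it is uniformly approximated on each $m$-cell by its harmonic part, whose oscillation contracts by the factor $\tfrac35$ per level (the $\tfrac15$--rule), giving $\sup_{y\in N_\epsilon(x)}|f(y)-f(x)|\lesssim(\tfrac35)^{m}\asymp\epsilon^{s}$. Hence each summand has variance $\lesssim \mu(N_\epsilon)^{-2}\!\int_{N_\epsilon}|f-f(x)|^2\,d\mu\lesssim \epsilon^{2s-d}$, and after rescaling
\begin{equation*}
\epsilon^{-\beta_0}\,\mathbb{E}\bigl|L_{\epsilon,n}f(x)-\mathcal{L}_\epsilon f(x)\bigr|\lesssim \epsilon^{-\beta_0}\bigl(\tfrac1n\epsilon^{2s-d}\bigr)^{1/2}=(n\epsilon^{3d})^{-1/2}.
\end{equation*}
This is why the hypothesis takes the form $n\epsilon(n)^{\beta}\to\infty$, and one may take $\beta=3d=3\log_2 3>1$ (any sharper, cancellation-based variance estimate only lowers $\beta$). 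The passage from $L_{\epsilon,n}$ back to $\mathcal{L}_{\epsilon,n}$ is handled as on the interval, by concentrating $\tfrac1n\#\{x_j\in N_\epsilon(x)\}$ about $\mu(N_\epsilon(x))$; the resulting error is $O((n\epsilon^{d})^{-1/2})$, which is dominated by the previous display. Optimizing $C\epsilon^{\theta}+C(n\epsilon^{3d})^{-1/2}$ over $\epsilon$ then gives the claimed rate with $\gamma=\gamma(\alpha,\beta)=\tfrac{\theta}{2\theta+3d}$ at $\epsilon(n)\asymp n^{-1/(2\theta+3d)}$.

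The main obstacle is the first step: establishing the quantitative rate in Conjecture~\ref{Sierpinski-conj}. Unlike the interval, where the first--order term of the Taylor expansion cancels by the exact symmetry of $B(x,\epsilon)$, the $d_{\text{cell}}$--neighborhood $N_\epsilon(x)$ is a union of the cell of $x$ together with its neighbors and carries no such symmetry, so the genuinely delicate point is to show that the leading harmonic (gradient) contribution integrates to zero, or at least to a lower order than $\epsilon^{\beta_0}$, leaving $c\mathcal{L}f$ as the rescaled limit. I expect this to require the harmonic and energy expansions of~\cite{strichartz2001laplacian,donglei2005laplacian,donglei2007laplacian} together with a careful accounting of boundary cells: the vanishing--normal--derivative hypothesis at $p_1,p_2,p_3$ plays precisely the role of $f'(\pm1)=0$ in Theorem~\ref{thm:Lepstof''}, ensuring that for $x$ near a corner the neighborhoods $N_\epsilon(x)$ abutting $p_j$ do not contribute an uncancelled first--order term. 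A secondary difficulty is identifying the correct exponent $\theta(\alpha)$ relating the H\"older regularity of $\mathcal{L}f$ (in the resistance metric) to the $\epsilon$--rate; once this is pinned down the optimization and the value of $\gamma$ follow immediately.
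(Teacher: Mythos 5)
This statement is a conjecture: the paper offers no proof of it, only the motivating analogy with Theorem~\ref{thm:randomlaponinterval} and Corollary~\ref{cor:randomlaponinterval} and the remark that the normal derivatives at $p_1,p_2,p_3$ play the role of $f'(\pm1)$. Your proposal is therefore being measured against nothing but its own internal completeness, and it is not complete. The genuine gap is the one you name yourself: the deterministic bias term $\bigl|\epsilon^{-\log5/\log2}\mathcal{L}_\epsilon f(x)-c\mathcal{L}f(x)\bigr|$ is exactly the content of Conjecture~\ref{Sierpinski-conj}, which is also open, and your argument needs not just that conjecture but a quantitative rate $C\epsilon^{\theta(\alpha)}$ for it. Without that input the optimization at the end produces no $\gamma>0$, so the proposal is a conditional reduction of one conjecture to a stronger form of another, not a proof. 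The difficulty you flag is the right one: $N_\epsilon(x)$ has none of the symmetry of $B(x,\epsilon)\subset\mathbb{R}$ that killed the linear term in Theorem~\ref{thm:Lepstof''}, so one must show the harmonic (first-order) contribution over a union of neighboring cells is of lower order, and this is precisely what the method-of-averages literature cited in the paper would have to be pushed to deliver with a rate.

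That said, the stochastic half of your argument is sound and goes beyond what the paper records, so it is worth keeping. The chain $\mathbb{E}|L_{\epsilon,n}f-\mathcal{L}_\epsilon f|\le(\Var/n)^{1/2}$, the measure estimate $\mu(N_\epsilon(x))\asymp\epsilon^{d}$ with $d=\log_23$, and the oscillation bound $\epsilon^{s}$ with $s=\log_2\frac53$ combine correctly to give $(n\epsilon^{3d})^{-1/2}$ after rescaling by $\epsilon^{-(d+s)}$, yielding a concrete admissible $\beta=3\log_23>1$. Two small cautions. First, the oscillation bound is correct but your mechanism is slightly garbled: it is not that ``the harmonic part contracts by $\frac35$ per level via the $\frac15$-rule'' applied to $f$ on its own cell (that reasoning is circular); the correct citation is the standard embedding of $\dom\mathcal{L}$ (with $\mathcal{L}f$ bounded) into the H\"older class of exponent $\log_2\frac53$ for the Euclidean metric, which is what gives $\sup_{y\in N_\epsilon(x)}|f(y)-f(x)|\lesssim\epsilon^{s}$. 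Second, the passage from $L_{\epsilon,n}$ to $\mathcal{L}_{\epsilon,n}$ by concentration of $\frac1n\#\{x_j\in N_\epsilon(x)\}$ about $\mu(N_\epsilon(x))$ needs the empty-neighborhood event handled (its probability is $(1-\mu(N_\epsilon(x)))^{n}$, which is negligible under $n\epsilon^{\beta}\to\infty$ with $\beta\ge d$, but this should be said). If you want to make progress on the missing deterministic step, the paper's own suggestion after Conjecture~\ref{conj:betterconvergence} --- comparing Green kernels of the resistance form and its cell-graph discretizations --- is probably a more tractable route to a rate than a direct Taylor-type cancellation argument.
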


The fact that the first Neumann eigenspace for $\mathcal{L}$ is two dimensional should imply that the graph Laplacians at randomly sampled points $\mathcal{L}_{\epsilon,n}(\omega)$ will have two eigenspaces with eigenvalues that are close together; the corresponding eigenfunctions are orthogonal and should both be close to eigenfunctions from the first Neumann eigenspace of $\mathcal{L}$.  In the limit as $n\to\infty$ they should then look like a random choice of basis for $\mathbb{R}^2$, providing the following conjecture that is also supported by numerical evidence for the random graph Laplacians defined using Euclidean distance, see the central image in Figure~\ref{fractal-fig}.

 \begin{conj}\label{conj-conv-rot}
 	As $n\to\infty$ and $\epsilon(n)\to0$ suffcently slowly, 
 	eigenmaps for random graph approximations to $\mathcal{L}$ on the Sierpinski gasket produce images of nonrandom eigenmaps that are randomly rotated  in $\mathbb{R}^2$ due to the relevant Neumann eigenvalues for $\mathcal{L}$ having multiplicity two. We conjecture that these random rotations have a limiting density which is invariant under the group of symmetries of the Sierpinski gasket. 
 \end{conj}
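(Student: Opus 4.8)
The plan is to combine a degenerate perturbation analysis of the random graph Laplacian on its near-degenerate low eigenspace with a representation-theoretic symmetry argument. Throughout I assume the spectral convergence asserted in Conjectures~\ref{Sierpinski-conj} and~\ref{conj:SGlapbyrandgraphs}, so that for large $n$ and slowly decaying $\epsilon(n)$ the operator $\mathcal{L}_{\epsilon,n}(\omega)$ has exactly two eigenvalues converging to the first non-trivial Neumann eigenvalue $\lambda_1$ of $\mathcal{L}$, which has multiplicity two, and that these two eigenvalues are separated by a gap from the rest of the spectrum. Let $V=\ker(\mathcal{L}-\lambda_1)$ be the two-dimensional eigenspace. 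The key structural fact I would exploit is that $V$ carries the two-dimensional irreducible representation $\rho$ of the dihedral symmetry group $G=D_3$ of SG, so that $\rho(G)\subset O(2)$ is the familiar copy of the triangle symmetries.

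First I would set up the randomness explicitly. Writing $\mathcal{L}_{\epsilon,n}(\omega)=\mathcal{L}_\epsilon + E_n(\omega)$, where $E_n(\omega)$ is the sampling fluctuation, degenerate perturbation theory says that the selection of the near-degenerate eigenpair is governed, to leading order, by the compression $M_n(\omega)=P_V\,E_n(\omega)\,P_V$, a random self-adjoint $2\times2$ matrix, with $P_V$ the spectral projection of $\mathcal{L}$ onto $V$. The eigenvectors of $M_n(\omega)$ determine the orthonormal basis of the approximate eigenspace that is produced, and hence the element $R(\omega)\in O(2)$ relating the observed eigenmap image to a fixed reference image; this already explains the first assertion of the conjecture, that the random images are rotated (and reflected) copies of the nonrandom eigenmap. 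Since the eigenvectors of a traceless symmetric $2\times2$ matrix depend only on the direction of its traceless part, the law of $R(\omega)$ is the pushforward of the law of the traceless part of $M_n(\omega)$ under the eigenvector map, which is continuous away from the scalar matrices.

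The symmetry step is where I expect the argument to be clean. Each $g\in G$ preserves $\mu$, so the law of the i.i.d. sample $\{x_1,\dots,x_n\}$, viewed as an unordered set, is $G$-invariant; consequently the law of the random operator $\mathcal{L}_{\epsilon,n}(\omega)$, and therefore of $M_n(\omega)$, is invariant under the induced action $M\mapsto\rho(g)M\rho(g)^{-1}$. Equivalently and more directly, precomposing the reference eigenmap $\Phi$ with a symmetry $g$ yields $\Phi\circ g=\rho(g)\,\Phi$, so a symmetry of the base induces the linear map $\rho(g)$ on the image, and $G$-invariance of the sampling distribution transports to invariance of the law of $R(\omega)$ under multiplication by $\rho(G)$. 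Passing to the limit, this forces the limiting density of $R(\omega)$ to be invariant under $\rho(G)\subset O(2)$, which is precisely the claimed invariance under the symmetry group of SG.

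The hard part will be establishing that a limiting density exists and is absolutely continuous, which requires showing that the fluctuation $M_n(\omega)$, after the appropriate scaling, converges to a nondegenerate (Gaussian) limit whose traceless part is supported on a full-dimensional set. On the interval this is supplied by the central limit theorem used in Theorem~\ref{thm-monte-carlo} and Theorem~\ref{thm:randomlaponinterval}, but on SG no such fluctuation CLT is presently available, and providing one---together with enough uniform control to guarantee that the near-degenerate eigenvectors genuinely track the eigenvectors of $M_n(\omega)$ and that the eigenvector map pushes the limiting law forward continuously---is the analytic crux. Once a nondegenerate $\rho(G)$-invariant Gaussian limit for $M_n(\omega)$ is in hand, the eigenvector direction has an explicit density and the symmetry step completes the proof; I would expect the resistance-form and Green-operator estimates suggested after Conjecture~\ref{conj:betterconvergence}, or the averaging-method results discussed after Conjecture~\ref{Sierpinski-conj}, to be the route to the missing fluctuation bound.
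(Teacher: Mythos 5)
This statement is a conjecture in the paper; no proof is given there, only a heuristic paragraph (two nearly degenerate eigenvalues because the first Neumann eigenspace of $\mathcal{L}$ is two-dimensional, hence the observed basis of the approximate eigenspace should look like a random choice of basis of $\mathbb{R}^2$) together with the numerical evidence of Figure~\ref{fractal-fig}. Your proposal is a genuinely more structured version of that heuristic --- the degenerate perturbation reduction to the compressed fluctuation $M_n(\omega)=P_V E_n(\omega)P_V$ and the observation that $G$-invariance of the sampling law forces invariance of the law of $R(\omega)$ under $\rho(G)$ are both sound and are a real refinement of what the paper says. In particular the symmetry step is essentially complete as stated: $\mu$ is invariant under the dihedral group, the kernel defining $\mathcal{L}_{\epsilon,n}$ depends only on a $G$-invariant (semi)metric, and $V$ carries the two-dimensional irreducible representation of $D_3$, so the pushforward argument does yield $\rho(G)$-invariance of any limiting law, which is exactly (and only) the invariance the conjecture asserts.

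However, this is a proof strategy, not a proof, and you correctly identify where it breaks down. Concretely: (i) the whole argument is conditional on Conjectures~\ref{Sierpinski-conj} and~\ref{conj:SGlapbyrandgraphs}, which are themselves open; (ii) the existence of a nondegenerate limiting law for the rescaled traceless part of $M_n(\omega)$ --- the analogue on SG of the scalar CLT in Theorems~\ref{thm-monte-carlo} and~\ref{thm:randomlaponinterval} --- is not available, and without it one cannot even assert that a limiting density for $R(\omega)$ exists, let alone that it is absolutely continuous; (iii) the degenerate perturbation step requires quantitative control showing that the splitting of the two near-degenerate eigenvalues, which is of the order of the fluctuation itself, dominates the second-order corrections coming from the rest of the spectrum, uniformly in the regime $n\to\infty$, $\epsilon(n)\to 0$; this is standard in finite dimensions but needs the spectral gap and operator-norm bounds that are precisely the missing analytic content. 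Since you flag (ii) and (iii) yourself, there is no hidden error, but the statement remains unproved by this route; what you have is a credible reduction of the conjecture to a fluctuation CLT for the compressed sampling error on the first Neumann eigenspace, which is a useful reformulation but not a resolution.
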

 
\begin{conj}\label{conj-conv2}
	For both the rescaled averaging Laplacian and the rescaled graph Laplacian at random sample points, defined with respect to the Euclidean metric on SG,  the eigenvalues and eigenfunctions converge uniformly to those of $\mathcal{L}$ with proper choice of normalization and alignment. Here $n\to\infty$ and $\epsilon(n)\to0$ but we do not know optimal   behavior of $\epsilon(n)$ for most accurate convergence. 
\end{conj}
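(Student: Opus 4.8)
The plan is to reduce the asserted convergence of spectral data to a convergence of the associated Dirichlet forms, and then to transfer that convergence from the cellular-metric setting of Conjecture~\ref{Sierpinski-conj} to the Euclidean one using the comparability $\tfrac12 d_{\text{cell}}(x,y)\le |x-y|\le 2\,d_{\text{cell}}(x,y)$. First I would record that the Euclidean-ball averaging operator $\mathcal{L}_\epsilon$ is self-adjoint with respect to the reference measure $m_\epsilon(dx)=\mu(B(x,\epsilon))\,d\mu(x)$, with associated symmetric Dirichlet form
\begin{equation*}
Q_\epsilon(f)=\tfrac12\iint_{|x-y|\le\epsilon}\bigl(f(x)-f(y)\bigr)^2\,d\mu(x)\,d\mu(y),
\end{equation*}
so that the eigenvalues and eigenfunctions of $\mathcal{L}_\epsilon$ are the critical values and critical points of the Rayleigh quotient $Q_\epsilon(f)/\|f\|_{L^2(m_\epsilon)}^2$ and the whole problem becomes variational.

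The core analytic step is to prove that the rescaled operators $\epsilon^{-\log 5/\log 2}\mathcal{L}_\epsilon$---equivalently the forms $Q_\epsilon$ together with their reference measures $m_\epsilon$, each suitably normalized---converge to $c\,\mathcal{L}$ (resp.\ to the form $c\,\mathcal{E}$ on $L^2(\mu)$, where $\mathcal{E}$ is the standard resistance form on SG) in a Mosco sense adapted to the varying reference measures $m_\epsilon\to\mu$. Because SG is compact and both $\mathcal{L}$ and $\mathcal{L}_\epsilon$ have compact resolvent with discrete spectrum, Mosco convergence of the forms upgrades automatically to norm-resolvent convergence and hence to convergence of each eigenvalue together with the spectral projections onto the corresponding eigenspaces---exactly the convergence of eigenvalues and eigenfunctions asserted. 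To establish the form convergence I would compare $Q_\epsilon$ with the cellular-metric form: since a Euclidean $\epsilon$-ball is sandwiched between the $d_{\text{cell}}$-balls of radius $\epsilon/2$ and $2\epsilon$, i.e.\ between cellular neighborhoods whose scales differ by a bounded amount, the two families of forms are comparable uniformly in $\epsilon$, and I would argue that the exponent $\log 5/\log 2$ and the existence of the limit survive this bounded-scale perturbation, the constant $c$ absorbing the discrepancy. The convergence for cellular averages themselves I would take from the operator statement of Conjecture~\ref{Sierpinski-conj}, or reprove directly from the rescaled cell-average constructions of~\cite{strichartz2001laplacian,donglei2005laplacian,donglei2007laplacian}.

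For the random graph Laplacian at i.i.d.\ sample points I would add a second limit on top of the deterministic one. With $\epsilon$ fixed, $\mathcal{L}_{\epsilon,n}(\omega)$ is the empirical version of $\mathcal{L}_\epsilon$; since $\mathcal{L}_\epsilon+I$ has a bounded kernel on the compact space SG it is Hilbert--Schmidt, so by~\cite[Theorem~3.1]{koltchinskii2000random} the spectrum of $\mathcal{L}_{\epsilon,n}(\omega)$ converges almost surely to that of $\mathcal{L}_\epsilon$ as $n\to\infty$, just as in the theorem preceding Conjecture~\ref{Sierpinski-conj}. One then lets $\epsilon=\epsilon(n)\to0$ slowly enough that each $\epsilon$-cell retains many sample points---the regime $n\epsilon(n)^\beta\to\infty$ of Conjecture~\ref{conj:SGlapbyrandgraphs}---and combines the two limits by a diagonal argument with quantitative control of the stochastic fluctuation. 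The phrase ``proper choice of \dots\ alignment'' is then forced by multiplicity: as recorded in Conjecture~\ref{conj-conv-rot}, the low Neumann eigenspaces of $\mathcal{L}$ are multidimensional, so individual eigenfunctions are defined only up to an orthogonal transformation of each eigenspace; the intrinsic object that converges is the spectral projection, and convergence of eigenfunctions is recovered only after fixing a consistent orthonormal basis of each limiting eigenspace.

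The main obstacle I expect is precisely this Euclidean-versus-cellular form comparison at the level needed for a limit, rather than merely two-sided bounds. Metric comparability gives $Q_\epsilon\asymp Q_\epsilon^{\text{cell}}$ up to constants, but Mosco convergence requires a genuine limiting form with the correct constant, and a Euclidean ball clips cells in an $x$-dependent way, so that neither the jump set nor the reference density $\mu(B(x,\epsilon))\asymp\epsilon^{\log 3/\log 2}$ is self-similar. These fluctuations are exactly what makes the pointwise statement fail (the content of Conjecture~\ref{conj-no-conv3}); the crux is therefore to show that, while they spoil pointwise convergence, they contribute only lower-order terms to $Q_\epsilon$ that vanish after rescaling, so that spectral convergence survives where operator convergence does not. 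A clean route would be a self-similar renewal analysis of how Euclidean $\epsilon$-balls decompose into cells across scales, together with a $\Gamma$-convergence (liminf/limsup) argument for the rescaled energies.
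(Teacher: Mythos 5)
This statement is Conjecture~\ref{conj-conv2}: the paper supplies no proof of it, only the numerical evidence of Figure~\ref{fractal-fig} and the remark that it is motivated by the cell-averaging results of~\cite{strichartz2001laplacian,donglei2005laplacian,donglei2007laplacian}, the law of large numbers and Proposition~\ref{prop:selfsimilarwellspacedpts}. So there is no proof to compare yours against; what you have written is a research program rather than a proof, and to your credit it assembles exactly the toolbox the authors themselves gesture at: the resistance/Dirichlet-form route sketched after Conjecture~\ref{conj:betterconvergence}, the Hilbert--Schmidt spectral convergence of~\cite[Theorem~3.1]{koltchinskii2000random} for the random step, and the cellular-average constructions behind Conjecture~\ref{Sierpinski-conj}. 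The variational reformulation, the Kuwae--Shioya-type Mosco convergence with varying reference measures $m_\epsilon$, and the observation that ``alignment'' is forced by eigenvalue multiplicity are all sensible and consistent with the paper's framing.

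The genuine gap --- which you partly acknowledge but which is in fact the whole content of the conjecture --- is the step from two-sided comparability to an actual limit. The bound $\tfrac12 d_{\text{cell}}(x,y)\le|x-y|\le 2d_{\text{cell}}(x,y)$ yields only $Q_\epsilon\asymp Q_\epsilon^{\text{cell}}$ with uniform constants; it does not produce a single limiting form $c\,\mathcal{E}$, and on self-similar sets quantities of this kind typically exhibit log-periodic oscillation in $\epsilon$, so that the rescaled energies may converge only along geometric subsequences $\epsilon_j=2^{-j}\epsilon_0$, with a subsequence-dependent constant. This is precisely the failure mode the authors themselves predict at the operator level in Conjecture~\ref{conj-no-conv3} (subsequential, weak-$L^2$ convergence only), and nothing in your plan shows that the oscillation cancels at the level of the quadratic form even though it survives at the level of the operator; if it does not cancel, the limiting eigenvalues would depend on the subsequence and the conjecture as stated would have to be weakened. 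A second unresolved step is the diagonal argument: \cite[Theorem~3.1]{koltchinskii2000random} gives almost-sure spectral convergence at \emph{fixed} $\epsilon$ with no rate, while the Hilbert--Schmidt norms of the kernels $N^{-m}\mathbf{1}$-type degenerate as $\epsilon\to0$, so coupling $n\to\infty$ with $\epsilon(n)\to0$ requires quantitative concentration (e.g.\ in the spirit of \cite{Koltchinskii,KoltchinskiiLounici} or \cite{burago2019spectral}) that is not currently available on SG. Both points would need to be supplied before this could be called a proof.
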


The preceding conjectures are mostly motivated by the existing results of~\cite{strichartz2001laplacian,donglei2005laplacian,donglei2007laplacian}, the law of large numbers and Proposition~\ref{prop:selfsimilarwellspacedpts}.  Our other conjectures arise from our numerical investigations of the eigenmaps of graph Laplacians at random sample points as summarized in Figure~\ref{fractal-fig}, which refer to graph Laplacians $\mathcal{L}_{\epsilon,n}$ defined using the Euclidean distance rather than the semimetric $d_{\text{cell}}{x,y}$. Given the Lipschitz bound $\frac12 d_{\text{cell}}(x,y)\leq |x-y|\leq 2d_{\text{cell}}(x,y)$ these are expected to be very similar. Indeed we expect that Conjecture~\ref{conj-conv-rot} will hold for these random graph Laplacians, as is suggested by the middle image in Figure~\ref{fractal-fig} and, in light of the last image in Figure~\ref{fractal-fig}, that there will also be convergence of eigenvalues and eigenfunctions.
 
To conclude this section we specify our prediction for the manner in which averaging over Euclidean balls should fail to approximate the Laplacian in the case of SG.

\begin{conj}\label{conj-no-conv3}
Suppose $\mathcal{L}f$ is H\"older continuous on SG.  If $\mathcal{L}_\epsilon$ and $\mathcal{L}_{\epsilon,n}(\omega)$ are defined using the Euclidean metric then there is a sequence $\epsilon_j\to0$ such that $\epsilon_j^{-\frac{\log 5}{\log2}} \mathcal L_{\epsilon_j}(\omega) f \to  c\mathcal Lf$ as $j\to\infty$ in the weak-$L^2$ topology (and similarly for $\mathbb{E}\mathcal{L}_{\epsilon,n}(\omega)$), but this convergence does not hold in the strong-$L^2$ topology for any sequence $\epsilon_j$.
\end{conj}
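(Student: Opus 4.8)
The plan is to localize the computation and then split the rescaled average into a part proportional to $\mathcal{L}f$ and a ``harmonic'' (first-order) part, and to track how a Euclidean ball slices the cell structure. Write $s=\frac{\log 5}{\log 2}$ and work first along the dyadic scales $\epsilon_m=2^{-m}$. Using the Lipschitz equivalence $\frac12 d_{\text{cell}}(x,y)\le|x-y|\le 2d_{\text{cell}}(x,y)$ together with the Taylor expansion for functions in the H\"older domain of $\mathcal{L}$ (as developed in \cite{strichartz2001laplacian,donglei2005laplacian,donglei2007laplacian} and underlying Conjecture~\ref{Sierpinski-conj}), I would establish a pointwise expansion
\begin{equation*}
\epsilon_m^{-s}\mathcal{L}_{\epsilon_m}f(x)=a(x,m)\,\mathcal{L}f(x)+b(x,m)+R(x,m),
\end{equation*}
where $R(x,m)\to0$ uniformly (the H\"older continuity of $\mathcal{L}f$ and the metric comparison controlling the remainder), $a(x,m)$ is the ratio of the normalized second moment of $B(x,\epsilon_m)$ to its value on a symmetric cellular neighborhood, and $b(x,m)=5^{m}\mu(B(x,\epsilon_m))^{-1}\int_{B(x,\epsilon_m)}h_x\,d\mu$ records the rescaled average of the local harmonic part $h_x$ of $f$ at $x$. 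The decisive observation is that $b(x,m)$ is genuinely of order one: a harmonic $h$ vanishing at $x$ oscillates by $\asymp(3/5)^m$ across the $m$-cell while that cell has measure $3^{-m}$, so its unsymmetrized integral is $\asymp 5^{-m}$ and survives the $5^m$ rescaling. Over the symmetric cells used in the classical averaging results this term cancels, but a Euclidean ball at a generic point is not symmetric with respect to the cell structure, so $b(x,m)$ does not cancel and fluctuates with the address of $x$.

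For the weak statement I would show $a(\cdot,m)\rightharpoonup c$ and $b(\cdot,m)\rightharpoonup 0$ in $L^2(\mu)$, whence $\epsilon_m^{-s}\mathcal{L}_{\epsilon_m}f\rightharpoonup c\,\mathcal{L}f$. Both are equidistribution statements: as $m\to\infty$ the configuration of $B(x,\epsilon_m)$ relative to the surrounding cells cycles, under the address shift, through a fixed finite family of geometric patterns, and $\mu$ is invariant under this shift, so the spatial average of $a(\cdot,m)$ stabilizes to its mean $c$ while its oscillation tends weakly to zero. For $b(\cdot,m)$ the additional input is the dihedral symmetry group of the gasket: the directional harmonic contribution has mean zero over each symmetry orbit, so its spatial average vanishes and $b(\cdot,m)\rightharpoonup0$. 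The random analogue follows by first sending $n\to\infty$: a law of large numbers in the spirit of Theorem~\ref{thm-monte-carlo} gives $\mathbb{E}\mathcal{L}_{\epsilon,n}(\omega)f\to\mathcal{L}_\epsilon f$ at each $x$, reducing the $\mathbb{E}\mathcal{L}_{\epsilon,n}(\omega)$ assertion to the deterministic one.

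To rule out strong convergence along \emph{any} sequence I would prove a uniform lower bound $\|\epsilon^{-s}\mathcal{L}_\epsilon f-c\mathcal{L}f\|_{L^2(\mu)}\ge\delta>0$ for all sufficiently small $\epsilon$, not merely along $\epsilon_m$. By the expansion this amounts to bounding $\|(a(\cdot,m)-c)\mathcal{L}f+b(\cdot,m)\|_{L^2}$ below, and here the order-one, non-cancelling nature of $b$ is what is used: choosing $f$ (for instance a low eigenfunction) whose local harmonic parts $h_x$ are nonzero on a set of positive measure forces $\|b(\cdot,m)\|_{L^2}\ge\delta$. Self-similarity makes this norm asymptotically constant in $m$, and interpolation between consecutive dyadic scales extends the bound to a log-periodic, hence bounded-below, function of $\epsilon$, so no sequence $\epsilon_j\to0$ can produce strong convergence.

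The hardest part will be making the second and third steps quantitative at once: establishing the expansion with an explicit, controllably fluctuating coefficient $b(x,m)$, and then proving that this coefficient vanishes weakly (via the symmetry and equidistribution argument) yet has $L^2$ norm bounded below uniformly in $\epsilon$. The uniform-in-$\epsilon$ lower bound is the crux, since it must exclude a ``resonant'' sequence of radii that accidentally symmetrizes $B(x,\epsilon)$ against the cell structure simultaneously for $\mu$-almost every $x$; ruling this out requires showing the fluctuation profile is genuinely nonconstant at every scale, which I expect to follow from the strict self-similarity of the gasket but will need a careful analysis of how Euclidean spheres cut across cells of comparable size.
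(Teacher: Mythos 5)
First, be aware that the paper contains no proof of this statement: it is Conjecture~\ref{conj-no-conv3}, one of the eight open conjectures the authors formulate, and the only support offered for it is the surrounding discussion of Strichartz's method of averages over cells and the remark that Euclidean balls are ``not the correct sets for the cancellation of the lower order terms to occur.'' Your overall architecture --- expand $f$ near $x$ into a harmonic (first-order) part plus a second-order part, observe that the harmonic part cancels exactly when one averages over the symmetric union of $m$-cells meeting $x$ but not over a Euclidean ball, and attribute the failure of strong convergence to the surviving harmonic contribution --- is precisely the mechanism the authors gesture at, so there is no competing argument in the paper to compare against and your framing is the natural one to attempt.

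There is, however, a concrete inconsistency at the point you call ``the decisive observation,'' and it undermines both halves of the argument. You define $b(x,m)=5^{m}\mu(B(x,\epsilon_m))^{-1}\int_{B(x,\epsilon_m)}h_x\,d\mu$ and you estimate the unnormalized integral as $\asymp(3/5)^m\cdot 3^{-m}=5^{-m}$; but $\mu(B(x,2^{-m}))\asymp 3^{-m}$, so your own numbers give $b(x,m)\asymp 5^{m}\cdot 3^{m}\cdot 5^{-m}=3^{m}$, which diverges rather than being ``of order one.'' This is not a cosmetic slip. If $\|b(\cdot,m)\|_{L^2(\mu)}$ really grows like $3^m$ (or like any unbounded quantity), then $\epsilon_m^{-\log 5/\log 2}\mathcal{L}_{\epsilon_m}f$ is unbounded in $L^2$ and cannot converge weakly in $L^2$ along any subsequence, since weakly convergent sequences in a Hilbert space are norm-bounded; your scheme would then refute the first assertion of the conjecture instead of proving it. Conversely, for the weak-convergence half to have a chance, the normalized average of $h_x$ over a Euclidean ball of radius $2^{-m}$ must be $O(5^{-m})$ rather than the naive $O((3/5)^m)$ --- an extra cancellation by a factor of $3^{-m}$ at (almost) every $x$, which your symmetry and equidistribution arguments do not supply: they yield that $b(\cdot,m)$ has mean zero in $x$, not that it is small at individual points, and mean zero is not enough to restore the norm-boundedness that weak convergence requires. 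Quantifying exactly how much of the exact cellular cancellation survives when the averaging region is a Euclidean ball is the real crux of the conjecture, and the proposal assumes an answer ($b$ of order one, neither vanishing nor divergent) that is contradicted by its own arithmetic. Until that is resolved, the subsequent steps (weak limits of $a$ and $b$, the uniform $L^2$ lower bound, the log-periodic interpolation between dyadic scales, and the law-of-large-numbers reduction for $\mathbb{E}\mathcal{L}_{\epsilon,n}(\omega)$) have nothing to stand on, even though each is individually a reasonable tactic.
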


See \cite{Kumagai1996,Kumagai1998,Kumagai2004} for related work on homogenization on fractals. 

\section{Weighted Riemannian manifolds with boundary}\label{ARTsection}

 In forthcoming article \cite{ART} we extend results of \cite{gine2006empirical} to an wide class of Riemannian manifolds with boundary. Below we give state two of the main theorems using notation from \cite{gine2006empirical}: 
 \begin{equation}
 	\label{eqn1}
 	D_{\epsilon,n}f(p):=\frac{1}{n\epsilon^{d+2}}\sum_{j=1}^nK(\frac{p-X_j}{\epsilon})(f(X_j)-f(p)),
 \end{equation} 
 \begin{equation}
 	\label{eqn2}
 	D_\epsilon f(p) :=\frac{1}{\epsilon^{d+2}}\mathbb{E}K(\frac{p-X}{\epsilon})(f(X)-f(p)) 
 \end{equation}
where $X,X_j,p\in\mathbb R^d$. 

As in the previous sections, we consider the situation where \(\epsilon = \epsilon(n) \to 0\) as \(n \to \infty\). Before discussing general Riemannian manifolds with boundaries, we present the most instructive one-dimensional case. For comparison with \cite{gine2006empirical}, we do not assume that functions \(f\) have bounded third-order derivatives, nor do we assume that \(n\epsilon^{1+4} \to 0\). Thus, our result in Theorem~\ref{thm1} strengthens the corresponding result in \cite{gine2006empirical}.

\begin{thm}[{\cite[Theorem 2.1]{ART}}]
\label{thm1}
Let $d=1$, $\{X,X_j\}$ be i.i.d with bounded   continuous density g.
Suppose that $$n\epsilon \to \infty$$ and $K\in L^1(\mathbb{R})$  
satisfies  
\begin{equation}\label{eq-Kt4}
 \int_{\mathbb R} (t^4+t^2) (K^4(t)+K^2(t))dt<\infty.  
\end{equation}

If   $f \in W^{2,\infty} (\mathbb R)$  
then 
\begin{equation} \label{eq-Zn}
\sqrt{n\epsilon^{1+2}}(D_{\epsilon,n}f(p)-D_{\epsilon}f(p)) \xrightarrow[n\to\infty]{d} s\mathcal{Z}
\end{equation}
where  $\mathcal{Z} \sim N(0,1)$ and 
\[
s^2=(f'(p))^2g(p)  \int_\mathbb{R}K^2(t)t^2 d\lambda(t).
\]\end{thm}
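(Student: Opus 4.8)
The plan is to recognize the left-hand side of \eqref{eq-Zn} as a normalized sum of row-wise i.i.d.\ centered random variables and to apply the Lyapunov central limit theorem for triangular arrays. First I would set
\[
\xi_{n,j} := \epsilon^{-3/2} K\!\Bigl(\tfrac{p-X_j}{\epsilon}\Bigr)\bigl(f(X_j)-f(p)\bigr),
\]
so that, since $d=1$ and hence $\epsilon^{d+2}=\epsilon^{3}$, a direct computation gives
\[
\sqrt{n\epsilon^{1+2}}\,\bigl(D_{\epsilon,n}f(p)-D_\epsilon f(p)\bigr)
= \frac{1}{\sqrt n}\sum_{j=1}^n\bigl(\xi_{n,j}-\mathbb{E}\xi_{n,j}\bigr).
\]
For each fixed $n$ the $\xi_{n,j}$ are i.i.d., but their law depends on $n$ through $\epsilon=\epsilon(n)\to0$, so this is a genuine triangular array and the classical Lindeberg--L\'evy theorem (Theorem~\ref{thm-monte-carlo}) must be replaced by its Lyapunov form.

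The second step is to identify the limiting variance. Writing $\sigma_n^2:=\Var(\xi_{n,1})$ and substituting $t=(p-x)/\epsilon$ against the density $g$, I would obtain
\[
\mathbb{E}\xi_{n,1}^2
= \int_{\mathbb{R}} K^2(t)\,\Bigl(\frac{f(p-\epsilon t)-f(p)}{\epsilon}\Bigr)^2 g(p-\epsilon t)\,dt .
\]
By the mean value theorem the integrand is dominated by $\|f'\|_\infty^2\,\|g\|_\infty\,K^2(t)t^2$, which is integrable thanks to the $t^2K^2$ term in \eqref{eq-Kt4}, while pointwise it tends to $K^2(t)t^2(f'(p))^2 g(p)$ using differentiability of $f$ at $p$ and continuity of $g$. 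Dominated convergence then yields $\mathbb{E}\xi_{n,1}^2\to s^2$. The same substitution together with the bound $|f(p-\epsilon t)-f(p)|\le \epsilon|t|\,\|f'\|_\infty$ and a Cauchy--Schwarz estimate (again using \eqref{eq-Kt4}) shows $\mathbb{E}\xi_{n,1}=O(\epsilon^{1/2})\to0$, so $\sigma_n^2\to s^2$.

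The decisive step is the Lyapunov condition with fourth moments. The substitution and mean value bound give
\[
\mathbb{E}\xi_{n,1}^4 \le \frac{\|f'\|_\infty^4\,\|g\|_\infty}{\epsilon}\int_{\mathbb{R}} K^4(t)\,t^4\,dt = O(\epsilon^{-1}),
\]
finite by the $t^4K^4$ term in \eqref{eq-Kt4}, and since $\mathbb{E}\xi_{n,1}=O(\epsilon^{1/2})$ the centered fourth moment also satisfies $\mathbb{E}|\xi_{n,1}-\mathbb{E}\xi_{n,1}|^4=O(\epsilon^{-1})$. Assuming $s>0$ (so $\sigma_n^2\to s^2>0$), the Lyapunov ratio is therefore
\[
\frac{1}{(n\sigma_n^2)^2}\sum_{j=1}^n \mathbb{E}\bigl|\xi_{n,j}-\mathbb{E}\xi_{n,j}\bigr|^4
= \frac{\mathbb{E}|\xi_{n,1}-\mathbb{E}\xi_{n,1}|^4}{n\,\sigma_n^4}
= O\!\Bigl(\frac{1}{n\epsilon}\Bigr)\to 0,
\]
which is exactly where the hypothesis $n\epsilon\to\infty$ enters. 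The Lyapunov CLT then gives $(n\sigma_n^2)^{-1/2}\sum_j(\xi_{n,j}-\mathbb{E}\xi_{n,j})\to N(0,1)$, and since $\sigma_n\to s$ one final application of Slutsky's theorem upgrades this to $\frac{1}{\sqrt n}\sum_j(\xi_{n,j}-\mathbb{E}\xi_{n,j})\to s\mathcal{Z}$, which is \eqref{eq-Zn}.

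I expect the main obstacle to be carrying out the dominated-convergence and moment estimates under the deliberately weak hypotheses: only $f\in W^{2,\infty}$, so that every error term must be controlled through $\|f'\|_\infty$ and $\|f''\|_\infty$ rather than a third derivative, and only $K\in L^1$ with the moment bound \eqref{eq-Kt4}, so that $K$ need be neither bounded nor compactly supported—here the $t^2$ and $t^4$ weights are precisely what make the required Cauchy--Schwarz bounds on $\int|K|\,|t|$ and related integrals finite. A secondary point requiring care is the degenerate case $s=0$ (that is, $f'(p)=0$ or $g(p)=0$): there the leading order of $\sigma_n^2$ vanishes, so $\sigma_n^2\to0$, and since $\frac{1}{\sqrt n}\sum_j(\xi_{n,j}-\mathbb{E}\xi_{n,j})$ has variance $\sigma_n^2$ it converges to the constant $0$ in $L^2$, hence in distribution, so the conclusion $s\mathcal{Z}=0$ still holds.
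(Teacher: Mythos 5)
Your proposal is correct and follows essentially the same route as the paper: a Lyapunov CLT for the triangular array with fourth moments controlled by \eqref{eq-Kt4}, the hypothesis $n\epsilon\to\infty$ entering exactly through the Lyapunov ratio $O(1/(n\epsilon))$, and the degenerate case $f'(p)=0$ disposed of by a second-moment bound. The only (cosmetic) difference is that the paper first reduces to $f(x)=x$ by splitting off the part of $f$ with vanishing derivative at $p$ (shown to vanish in $L^2$), whereas you keep the full $f$ and identify the limiting variance $s^2$ directly by dominated convergence.
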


\begin{proof}
In this proof, without loss of generality, we assume that $p=0$ and $f(0)=0$. 
Let $Z_n$ denote the left hand side of \eqref{eq-Zn}. 

We start with considering the case $f'(0)=0$. In that case 
$
|f(x)| \leqslant    \|f''  \|_\infty |x|^2
$
and so 
\begin{align}
\mathbb{E}(Z_{n}^2)&=\frac{1}{n\epsilon^{1+2}}\sum_{j=1}^n\mathbb{E}\Big(D_{\epsilon,n}f(0)-\mathbb E 
\left(D_{\epsilon}f(0)\right)
)^2\\
&\leqslant
\frac{1}{ \epsilon^{1+2}} \mathbb{E}\Big(D_{\epsilon,n}f(0) \Big)
^2 
\leqslant 
\frac{\|f''  \|_\infty^2}{ \epsilon^{1+2}} \mathbb{E} K^2(  X /\epsilon)X^4 
  \\
&
\leqslant
 \frac{C^2}{ \epsilon^{1+2}} \int _{\mathbb R}  K^2(  x /\epsilon)x^4  dx
=
\epsilon^2 C^2   \int _{\mathbb R}  K^2(  t)t^4  dt \xrightarrow[\epsilon\to0] {}0
\end{align}
  because of \eqref{eq-Kt4}.

If $f'(0)\neq0$  then, without loss of generality, in the end of the proof it is enough to consider the case $f(x)=x$ and $K$ not equal to zero. In that case 
\begin{align}
Z_{n}&= \frac{1}{\sqrt{n\epsilon^{1+2}}}\sum_{j=1}^n
\left(K
\left(\frac{-X_j}{\epsilon}\right)
X_j-
\mathbb{E}
\left(K
\left(\frac{-X_j}{\epsilon}\right)X_j\right) \right) 
\\
Var(Z_{n})& = \frac{1}{\epsilon^{1+2}} 
\mathbb{E}
\left(K
\left(\frac{X}{\epsilon}\right)
X-
\mathbb{E}
\left(K
\left(\frac{X}{\epsilon}\right)X\right) \right)  ^2
\\
&
\xrightarrow[\epsilon\to0] {}
 g(0) \int_{\mathbb{R}} t^2  K^2(t)  \, dt  .
\end{align}
This quantity is positive unless $K$ vanishes identically or $g(0)=0$, and here we used the fact that 
\begin{equation}
\lim\limits_{ \epsilon\to0 }
\frac{1}{\sqrt{\epsilon^{1+2}}} 
\mathbb{E}
\left(K
\left(\frac{X}{\epsilon}\right)X\right)
= const
\lim\limits_{ \epsilon\to0 }\sqrt{\epsilon} = 0.
\end{equation}
To use Lyapounov's $CLT$ from \cite{billingsley2017probability}, it suffices to check 
\begin{equation}
\lim_{n\to\infty} 
\frac{1}{n\epsilon^{2(1+2)}}\mathbb{E}
\left(K
\left(\frac{X}{\epsilon}\right)
X-
\mathbb{E}
\left(K
\left(\frac{X}{\epsilon}\right)X\right) \right)   
^4
=
\lim_{n\to\infty} 
\frac{const}{n\epsilon}
=
0
\end{equation} 
which follows from calculations   above and the condition   $n\epsilon\to\infty$.
\end{proof}

\begin{rem}
Theorem~\ref{thm1} allows for the kernel $K(t, p)$ to depend on the point $p$, in addition to its dependence on $t$. We choose to omit this dependence in our notation because it does not influence the proof.
\end{rem}

 In higher dimensions we have the following theorems. 
 
 \begin{thm}[{\cite[Theorem 2.4]{ART}}]
 	\label{thm2.4}
 	Let $\{X,X_j\}$ be i.i.d random variables taking values in $\mathbb{R}^d$ with continuous   bounded density $g$. 
 	Suppose $K:\mathbb{R}^d\to\mathbb{R}$ is a locally bounded kernel satisfying $O(\frac{1}{\norm{t}^{d+4}})$ as $\norm{t} \to \infty$.
 	Let $\mathcal{F}$ be a family of real valued functions on $\mathbb{R}^d$ 
 	with bounded derivatives of the third order, $f:\mathbb{R}^d \to \mathbb{R}$ is such that $f\in \mathcal{F}$, and $D_{\epsilon,n}f(p)$, and $D_{\epsilon}f(p)$ be as in \ref{eqn1} and \ref{eqn2} respectively.
 	If $n\epsilon^d \to \infty$ and $n\epsilon^{d+4} \to 0$ then 
\begin{equation}
 	\sqrt{n\epsilon^{d+2}}(D_{\epsilon,n}f(p)-D_{\epsilon}f(p)) \overset{d}\to s\mathcal{Z}
\end{equation} 	
 	where, $\epsilon$ is a sequence such that $\epsilon \overset{n\to \infty} \to 0$, $\mathcal{Z} \sim N(0,1)$ and 
\begin{equation}
 	s^2 = g(p) \sum_{i,j=1}^d  \frac{\partial}{\partial x_i}f(p)\frac{\partial}{\partial x_j}f(p)\int_{\mathbb{R}^d}K^2(-t)t_it_j d\lambda(t).
\end{equation} 	 \end{thm}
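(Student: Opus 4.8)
The plan is to follow the one-dimensional argument of Theorem~\ref{thm1}, replacing the scalar Taylor expansion by its multivariate form and $f'(p)$ by the gradient $\nabla f(p)$. First I would reduce to $p=0$ and $f(0)=0$, since translating coordinates and subtracting a constant leave the centered quantity unchanged. The key structural point is that $D_{\epsilon,n}f(0)$ is an empirical mean of i.i.d.\ variables
\[
Y_j=\frac1{\epsilon^{d+2}}K\Bigl(\frac{-X_j}{\epsilon}\Bigr)\bigl(f(X_j)-f(0)\bigr),\qquad \mathbb{E}Y_j=D_\epsilon f(0),
\]
so that
\[
Z_n:=\sqrt{n\epsilon^{d+2}}\bigl(D_{\epsilon,n}f(0)-D_\epsilon f(0)\bigr)=\frac1{\sqrt n}\sum_{j=1}^n \epsilon^{(d+2)/2}\bigl(Y_j-\mathbb{E}Y_j\bigr)
\]
is a normalized sum of i.i.d.\ centered summands forming a triangular array in $n$ through $\epsilon=\epsilon(n)$.

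The second step is to compute $\Var(Z_n)=\epsilon^{d+2}\Var(Y_1)$. Substituting $x=\epsilon t$ gives
\[
\epsilon^{d+2}\mathbb{E}Y_1^2=\frac1{\epsilon^{d+2}}\int_{\mathbb{R}^d}K^2\Bigl(\frac{-x}{\epsilon}\Bigr)\bigl(f(x)\bigr)^2 g(x)\,dx=\frac1{\epsilon^2}\int_{\mathbb{R}^d}K^2(-t)\bigl(f(\epsilon t)\bigr)^2 g(\epsilon t)\,dt,
\]
and inserting $f(\epsilon t)=\epsilon\,\nabla f(0)\cdot t+O(\epsilon^2\|t\|^2)$ together with $g(\epsilon t)\to g(0)$ yields the limit $s^2=g(0)\sum_{i,j}\partial_i f(0)\,\partial_j f(0)\int_{\mathbb{R}^d}K^2(-t)t_it_j\,dt$. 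The bounded third derivatives control the Taylor remainder, and the decay $K(t)=O(\|t\|^{-(d+4)})$ makes $\int K^2(-t)t_it_j\,dt$ converge. I would then check that $\epsilon^{d+2}(\mathbb{E}Y_1)^2$ is negligible: the same substitution shows $\mathbb{E}Y_1=O(\epsilon^{-1})$, so this term is $O(\epsilon^d)\to0$. When $\nabla f(0)=0$ the quadratic term dominates and the identical computation gives $\Var(Z_n)=O(\epsilon^2)\to0$, so $Z_n\to0$ in probability and $s=0$, exactly as in the $f'(0)=0$ branch of Theorem~\ref{thm1}.

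The third step verifies the Lyapunov condition for the array. In its fourth-moment form it reduces to showing $n^{-1}\epsilon^{2(d+2)}\mathbb{E}\,\bigl|Y_1-\mathbb{E}Y_1\bigr|^4\to0$. Estimating $\epsilon^{2(d+2)}\mathbb{E}Y_1^4$ by the same change of variables and the bound $|f(\epsilon t)|\le C\epsilon\|t\|$ produces a term of order $\epsilon^{-d}\int_{\mathbb{R}^d}K^4(-t)\|t\|^4\,dt$, where the integral is finite by the assumed decay of $K$; dividing by $n$ gives $O\bigl((n\epsilon^d)^{-1}\bigr)\to0$ under $n\epsilon^d\to\infty$. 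Since $\Var(Z_n)\to s^2$ and the Lyapunov ratio tends to zero, the Lyapunov central limit theorem from~\cite{billingsley2017probability} gives $Z_n\xrightarrow{d}s\mathcal{Z}$, which is the claim.

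The main obstacle is the bookkeeping of lower-order terms: one must confirm that every contribution beyond the leading linear Taylor term---the Hessian correction, the variation $g(\epsilon t)-g(0)$, and the cross term $(\mathbb{E}Y_1)^2$---is of strictly smaller order after multiplication by $\epsilon^{d+2}$, and that the second- and fourth-moment integrals converge; both hinge on the decay rate $O(\|t\|^{-(d+4)})$ being matched to the powers of $\|t\|$ carried by the moments. I expect, consistently with the remark following Theorem~\ref{thm1}, that the upper restriction $n\epsilon^{d+4}\to0$ is not actually used in this fluctuation estimate---only $n\epsilon^d\to\infty$ drives the Lyapunov bound---and that it is retained to match the regime in which $D_\epsilon f$ approximates the limiting Laplacian $c\mathcal{L}f$.
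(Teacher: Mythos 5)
Your proposal is correct and follows essentially the same route as the paper's own argument: the paper proves only the one-dimensional case (Theorem~\ref{thm1}), deferring Theorem~\ref{thm2.4} to \cite{ART}, and that proof is exactly your scheme --- reduce to $p=0$, $f(0)=0$, split on whether the first-order term vanishes, identify the limiting variance by the substitution $x=\epsilon t$, and verify Lyapunov's fourth-moment condition using $n\epsilon^d\to\infty$. Your closing observation that $n\epsilon^{d+4}\to0$ is not needed for this centered fluctuation statement is consistent with the paper's own remark that Theorem~\ref{thm1} drops that hypothesis.
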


 \begin{thm}[{\cite[Theorem 2.5]{ART}}]
 	\label{thm0.4}
 	Let $X$ be a random variable  taking values in $\mathbb{R}^d$ with twice differentiable density $g$, bounded up to the second order.
 	Suppose $K$ is an even, locally bounded kernel satisfying $K(t)=O(\frac{1}{\norm{t}^{d+5}})$.
 	If $\mathcal{F}$ is a family of three times differentiable functions with bounded derivatives of the third order, $f \in \mathcal{F},D_\epsilon f(p)$ is defined as before and 
 	\begin{align} \label{delta}
 		\Delta f(p)&:=\sum_{i,j=1}^d\frac{\partial}{\partial x_i}f(p)\frac{\partial}{\partial x_j}g(p)\int_{\mathbb{R}^d}K(-t)t_it_jd\lambda(t) \\
 		&\quad + \frac{g(p)}{2}\sum_{i,j=1}^d\frac{\partial^2}{\partial x_i \partial x_j}f(p)\int_{\mathbb{R}^d}K(-t)t_it_jd\lambda(t)
 	\end{align}
 	Then
 	\begin{equation*}
 		\abs{D_\epsilon f(p)-\Delta f(p)}=O(\epsilon).
 	\end{equation*}
 \end{thm}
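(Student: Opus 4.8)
The plan is to reduce $D_\epsilon f(p)$ to an integral in a rescaled variable and then match a Taylor expansion term-by-term against the definition of $\Delta f(p)$. First I would write $D_\epsilon f(p)=\epsilon^{-(d+2)}\int_{\mathbb{R}^d} K\bigl(\tfrac{p-x}{\epsilon}\bigr)(f(x)-f(p))g(x)\,dx$ and substitute $t=(p-x)/\epsilon$, so $x=p-\epsilon t$ and $dx=\epsilon^d\,dt$, to obtain
\begin{equation*}
\epsilon^{2}D_\epsilon f(p)=\int_{\mathbb{R}^d} K(t)\,\bigl(f(p-\epsilon t)-f(p)\bigr)\,g(p-\epsilon t)\,dt .
\end{equation*}
This isolates the factor $\epsilon^{-2}$ that must be cancelled, so it suffices to show the right-hand side equals $\epsilon^2\Delta f(p)+O(\epsilon^3)$ for fixed $f\in\mathcal F$ and fixed $p$ (the constant may depend on $f$ and $p$).

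Next I would Taylor expand both factors about $p$. Using that $f$ has globally bounded third derivatives, $f(p-\epsilon t)-f(p)=-\epsilon\,\nabla f(p)\cdot t+\tfrac{\epsilon^2}{2}\,t^{\mathsf T}H_f(p)\,t+R_f$ with $|R_f|\le C\epsilon^3\|t\|^3$ for all $t$; using that $g$ is bounded up to second order, $g(p-\epsilon t)=g(p)-\epsilon\,\nabla g(p)\cdot t+R_g$ with both the sharp bound $|R_g|\le C\epsilon^2\|t\|^2$ and the cruder \emph{global} bound $|R_g|\le C(1+\epsilon\|t\|)$ coming from boundedness of $g$ and $\nabla g$. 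Write $P_f,P_g$ for the polynomial principal parts, so $f(p-\epsilon t)-f(p)=P_f+R_f$ and $g(p-\epsilon t)=P_g+R_g$. Multiplying, the unique term of order $\epsilon$ is $-\epsilon\,g(p)\,\nabla f(p)\cdot t$; since $K$ is even and $t\mapsto t_i$ is odd, $\int K(t)t_i\,dt=0$, so this term, which would otherwise produce an unacceptable $O(\epsilon^{-1})$ contribution, vanishes. This vanishing is the single most important structural point and is exactly where evenness of $K$ is used. The order-$\epsilon^2$ terms are $\epsilon^2(\nabla f\cdot t)(\nabla g\cdot t)$ and $\tfrac{\epsilon^2}{2}g\,(t^{\mathsf T}H_f t)$; integrating against $K$ and rewriting the moments as $\int K(-t)t_it_j\,dt$ (legitimate since $K(t)=K(-t)$) reproduces exactly the two sums defining $\Delta f(p)$, yielding the $\epsilon^2\Delta f(p)$ contribution.

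It then remains to show every leftover integrates to $O(\epsilon^3)$, after which dividing by $\epsilon^2$ gives the claim. The leftovers are the order-$\epsilon^3$ polynomial piece $-\tfrac{\epsilon^3}{2}(t^{\mathsf T}H_f t)(\nabla g\cdot t)$ from $P_fP_g$, the mixed terms $P_fR_g$ and $R_fP_g$, and the cross term $R_fR_g$. I expect $R_fR_g$ to be the main obstacle: the naive bound $|R_fR_g|\le C\epsilon^5\|t\|^5$ forces the divergent moment $\int|K|\|t\|^5\,dt$. The fix is to pair the sharp bound $|R_f|\le C\epsilon^3\|t\|^3$ with the \emph{global} bound $|R_g|\le C(1+\epsilon\|t\|)$, giving $\int|K|\,|R_fR_g|\,dt\le C\epsilon^3\int|K|\|t\|^3\,dt+C\epsilon^4\int|K|\|t\|^4\,dt=O(\epsilon^3)$; the mixed terms and the $P_fP_g$ remainder are estimated identically. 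Each estimate needs only the moments $\int|K(t)|\,\|t\|^m\,dt<\infty$ for $m\le 4$, and these are finite precisely because $K(t)=O(\|t\|^{-(d+5)})$ gives, in polar coordinates, $\int_{\|t\|>1}|K(t)|\,\|t\|^m\,dt\asymp\int_1^\infty r^{m-6}\,dr<\infty$ for $m<5$, while local boundedness of $K$ controls the contribution near the origin. This matching of the required moments to the decay exponent $d+5$ is the delicate bookkeeping of the argument; with it in place, summing the estimates yields $\epsilon^2 D_\epsilon f(p)=\epsilon^2\Delta f(p)+O(\epsilon^3)$ and hence $\bigl|D_\epsilon f(p)-\Delta f(p)\bigr|=O(\epsilon)$.
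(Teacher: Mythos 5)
Your argument is correct. The paper itself does not prove Theorem~\ref{thm0.4}; it only states it, deferring the proof to the forthcoming reference \cite{ART}, so there is no in-paper proof to compare against. Your route is the natural one and is consistent with the Taylor-expansion computations the paper does carry out in simpler settings (Theorem~\ref{thm:Lepstof''} and the heuristic~\eqref{eq-aver-Lp}). You correctly identify the two points where the hypotheses are actually consumed: evenness of $K$ kills the first-order term $-\epsilon\, g(p)\,\nabla f(p)\cdot t$, which would otherwise contribute at order $\epsilon^{-1}$ after the division by $\epsilon^{2}$; and the decay $K(t)=O(\norm{t}^{-(d+5)})$ gives finiteness of $\int\abs{K(t)}\norm{t}^{m}\,dt$ exactly for $m\le 4$, which forces the asymmetric pairing of remainder bounds (sharp $\abs{R_f}\le C\epsilon^{3}\norm{t}^{3}$ against the crude global $\abs{R_g}\le C(1+\epsilon\norm{t})$) to avoid the divergent fifth moment. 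That bookkeeping is the only delicate step and you handle it correctly, so the estimate $\epsilon^{2}D_\epsilon f(p)=\epsilon^{2}\Delta f(p)+O(\epsilon^{3})$, hence $\abs{D_\epsilon f(p)-\Delta f(p)}=O(\epsilon)$, goes through.
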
 
  
For the sake of brevity, we do not include here the results for manifolds with boundary, which are detailed in \cite{ART}.

\bibliographystyle{alpha}    
\bibliography{REU2023bib}

\end{document}